\theoremstyle{plain}
\newtheorem{proposition}{Proposition}[section]
\newtheorem{theorem}[proposition]{Theorem}
\newtheorem*{theorem*}{Theorem}
\newtheorem{thm}{Theorem}[section]
\newtheorem{lemma}[proposition]{Lemma}
\newtheorem{corollary}[proposition]{Corollary}
\newtheorem*{conjecture}{Conjecture}
\theoremstyle{definition}
\declaretheorem[name=Remark,sibling=proposition,qed={\raisebox{+0.5ex}{\hbox{\small$\blacklozenge$}}}]{remark}
\newcommand{\frk}[1]{\mathfrak{#1}}
\newcommand{\bb}[1]{\mathbb{#1}}
\newcommand{\N}{\mathbb{N}}	
\newcommand{\Z}{\mathbb{Z}}	
\newcommand{\R}{\mathbb{R}}	
\newcommand{\Id}{\mathrm{Id}}	
\newcommand{\Span}{\mathrm{span}}	
\newcommand{\de}{\partial}		
\newcommand{\into}{\hookrightarrow}		
\newcommand{\ddx}{\framebox[\width]{$\Rightarrow$} }
\newcommand{\ssx}{\framebox[\width]{$\Leftarrow$} }
\newcommand{\Ad}{\mathrm{Ad}}
\newcommand{\ad}{\mathrm{ad}}
\newcommand{\Aut}{\mathtt{Aut}}
\newcommand{\cont}{\mathtt{Cont}}
\title[Polarised Lie groups]{Polarised Lie groups contactomorphic\\ to stratified groups}
\author[Nicolussi Golo]{Sebastiano Nicolussi Golo}
	\address[Nicolussi Golo]{Dipartimento di Matematica, Universit\`a di Padova, Via Trieste~63, I-35121 Padova, Italy, Orcid ID: https://orcid.org/0000-0002-3773-6471}
	\thanks{S.N.G.~has been partially supported 
	by the European Unions Seventh Framework Programme, Marie Curie Actions-Initial Training Network, under grant agreement n. 607643, ``Metric Analysis For Emergent Technologies (MAnET)'',
	and
	by the EPSRC Grant "Sub-Elliptic Harmonic Analysis" (EP/P002447/1),
	and
	by University of Padova STARS Project "Sub-Riemannian Geometry and Geometric Measure Theory Issues: Old and New".}
\author[Ottazzi]{Alessandro Ottazzi}
	\address[Ottazzi]{School of Mathematics and Statistics\\ University of New South Wales\\ UNSW Sydney 2052\\ Australia}
	\thanks{A.O.~has been partially supported by the ARC Discovery grant~DP170103025.}
\date{\today}
\subjclass[2010]{%
30L10 
22E25, 
53C30, 
}
\keywords{%
Tanaka prolongation, %
Stratified Lie groups,
Contact structures, %
Quasi-conformal maps %
}
\begin{document}
\begin{abstract}
For a stratified  group $G$, we construct a class of polarised Lie groups, which we call modifications of $G$, that are locally contactomorphic to it. Vice versa, we show that if a polarised group is locally contactomorphic to a stratified group $G$, whose Lie algebra has finite Tanaka prolongation, then it must be a modification of $G$.
\end{abstract}
\maketitle
\setcounter{tocdepth}{2}
\phantomsection
\addcontentsline{toc}{section}{Contents}
\tableofcontents

\section{Introduction}
In this article, we consider the following question: given a stratified group $G$, we wish to characterise those polarised Lie groups that are locally contactomorphic to $G$. 
Here a polarisation on a Lie group is the choice of a left-invariant and bracket-generating subbundle of the tangent bundle, and a contactomorphism between polarised Lie groups is a diffeomorphism that preserves the polarisation. Stratified groups carry a canonical polarisation. Given a stratified group, we will construct a class of polarised Lie groups that are contactomorphic to $G$, which we will call {\it modifications} of $G$. The key tool for our construction
will be Tanaka prolongation theory. 
\\

Before diving in the technical details of our main results, we first provide some framework.
The problem under study is relevant in different areas, such as Tanaka prolongation theory, CR geometry, sub-Riemannian geometry, and control theory. In the setting of Tanaka's theory, it is known that the infinitesimal automorphisms of the polarisation associated to a stratified Lie algebra are encoded by its full Tanaka prolongation (see, e.g., \cite{MR2917692, MR0266258, MR1274961}). If the Lie algebra is not stratified, however, all we can conclude  is that every infinitesimal automorphism induces an infinitesimal automorphism on its stratified symbol. In this paper, we construct classes of polarised Lie algebras that are not stratified, but that have the same space of infinitesimal automorphisms  as their stratified symbol.

Our study has potential applications to geometric control theory. Given a nonholonomic control system, 
 the motion planning problem consists in finding a curve tangent to the polarisation that connects two given points in the ambient space. Nilpotent Lie groups are the widest class of nonholonomic systems for which an exact solution to the motion planning problem is known, see \cite{MR3308372}. Contactomorphisms are equivalences of motion planning problems. Thus, our method detects classes of non-nilpotent nonholonomic systems that are equivalent to nilpotent ones.

Furthermore, our findings have consequences  in metric geometry.
On a polarised Lie group, one may define a left-invariant sub-Riemannian distance. In metric geometry, it is natural to study the  equivalence of metric spaces up to
isometries,  bi-Lipschitz mappings, conformal and quasiconformal mappings. For example, if two stratified groups are (locally) quasiconformal, then their Lie algebras are isomorphic (\cite{MR979599}). If two nilpotent Lie groups are isometric, then they are isomorphic (\cite{MR3646026,MR3441517}).
It is an open question to determine whether two nilpotent Lie groups that are globally bi-Lipschitz to one another are indeed isomorphic.
In~ \cite{2017arXiv170509648C}, the authors study the Lie groups that can be made isometric to a given nilpotent Lie group, endowed with a left-invariant distance. (See also \cite{MR936815} for the Riemannian case.) 
In this sense, our work follows \cite{2017arXiv170509648C}, because contactomorphisms are locally bi-Lipschitz.

In sub-Riemannian geometry, one of the major open problems is to determine whether the conclusions of Sard Theorem hold for the endpoint map, which is a canonical map from an infinite dimensional path space to the underlying finite dimensional manifold. The set of critical values for the endpoint map is also known 
as abnormal set, being the set of endpoints of abnormal extremals leaving the base point. In the context of Lie groups, perhaps the most general positive results have been proved in \cite{MR3569245}. Here the authors prove that the abnormal set has measure zero in the case of 2-step stratified groups and several other examples. This property for the abnormal set is preserved by contactomorphisms between sub-Riemannian manifolds. It then comes out from our results that every modification of a stratified group satisfies the Sard Theorem, if the stratified group does.
\\

Now we will present our main results in detail.
Recall that a Tanaka prolongation of a stratified Lie algebra $\frk g$ through a Lie subalgebra $\frk g_0$ of the Lie algebra of derivations of $\frk g$ that preserve the stratification is the maximal graded Lie algebra that contains $\frk g + \frk g_0$. When $\frk g_0$ is chosen as the whole set of strata preserving derivations, we obtain the full Tanaka prolongation.
When the prolongation algebra is finite dimensional, we obtain a graded Lie algebra $\frk p= \frk{g}\oplus \frk{q}$. Modifications of $\frk g$ are then defined to be subalgebras $\frk s$ of $\frk p$ of the same dimension of $\frk g$ that are transversal to $\frak q$. It turns out that there is a linear map $\sigma: \frk g\to\frk q$ of which the modification is the graph. If $V$ is the first layer of $\frk g$, then the set $\{v+\sigma(v)\,:\, v\in V\}$ defines a polarisation on a Lie group whose Lie algebra is the modification $\frk s$. 

Our first main result draws the connection between modifications of $G$ and Lie groups that are locally contactomorphic to $G$.
\begin{thm}\label{thm5ea02b09}
	Every modification of a stratified Lie group $G$ is locally contactomorphic to 
$G$.
	Viceversa, if $G$ is rigid, then every polarised Lie group that is locally contactomorphic to $G$ is one of its modification.
\end{thm}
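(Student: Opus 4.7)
The plan is in two parts. For the forward direction, I would realise both $G$ and the modification group $S$ as local sections of a common homogeneous contact manifold. Let $P$ be the simply connected Lie group with algebra $\frk p$, and let $Q \subset P$ be the connected subgroup with algebra $\frk q$. Since $V + \frk q$ is $\Ad(Q)$-invariant and contains $\frk q$, the $P$-translates of $V + \frk q$ descend to a $P$-invariant distribution $\bar{\cal E}$ on $M = P/Q$. Both $G$ and $S$ embed as connected subgroups of $P$ whose Lie algebras are transversal to $\frk q$, so the projection $\pi : P \to M$ restricts to local diffeomorphisms $G \to M$ and $S \to M$ near the identity. Pulling $\bar{\cal E}$ back to $G$ and to $S$ recovers the polarisation in each case, because $(V + \frk q) \cap \frk g = V$ and $(V + \frk q) \cap \frk s = \{v + \sigma(v) : v \in V\}$ by transversality. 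Composing one local diffeomorphism with the inverse of the other yields the desired local contactomorphism $G \to S$.

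For the converse, suppose $H$ is locally contactomorphic to the rigid group $G$ via a map $\phi$, which we normalise to $\phi(e_H) = e_G$ by composing with a left translation. Rigidity of $G$ gives, via Tanaka's theorem, an identification of the Lie algebra of germs of contact vector fields at $e_G$ with $\frk p$, under which $\frk q$ corresponds to those germs vanishing at $e_G$, and the $\frk g$-component in the decomposition $\frk p = \frk g \oplus \frk q$ records the value at $e_G$. The contactomorphism $\phi$ transports the corresponding algebra on $H$ isomorphically onto this same $\frk p$. Because the polarisation on $H$ is left-invariant, every right-invariant vector field on $H$ preserves it and is therefore a contact vector field; using the standard (anti-)isomorphism between $\frk h$ and the algebra of right-invariant vector fields, this produces an injective Lie algebra homomorphism $\iota : \frk h \hookrightarrow \frk p$ whose image $\tilde{\frk h}$ has dimension $\dim \frk g$.

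A nonzero right-invariant vector field on $H$ is nowhere vanishing, so its image under $\phi$ is nonvanishing at $e_G$; hence $\iota(\frk h) \cap \frk q = 0$ and $\tilde{\frk h}$ is the graph of a linear map $\sigma : \frk g \to \frk q$, i.e., a modification of $\frk g$. Finally, $X \in \frk h$ lies in the polarisation at $e_H$ iff the corresponding contact vector field on $G$ has its value at $e_G$ in $V$; under the Tanaka identification this is equivalent to the $\frk g$-component of $\iota(X)$ lying in $V$, which is precisely the polarisation $\{v + \sigma(v) : v \in V\}$ of the modification. Thus $\frk h$, with its polarisation, is isomorphic to $\frk s$ as a polarised Lie algebra, which by the forward direction suffices to conclude.

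The main obstacle is the converse, and within it the careful bookkeeping between the geometric data (contact vector fields, polarisations, base points) and the graded algebraic data of $\frk p = \frk g \oplus \frk q$. One must verify that the Tanaka identification really does send contact germs vanishing at $e_G$ to $\frk q$ and reads off the value at $e_G$ from the $\frk g$-component, and that the right-invariant embedding yields an injection into this specific algebra of germs. These are standard features of Tanaka prolongation theory, but they are the keystone that turns the abstract embedding $\iota$ into the concrete modification structure described by $\sigma$.
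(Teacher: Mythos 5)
Your proposal is correct and follows essentially the same route as the paper: the forward direction via embedding both $G$ and the modification $S$ as local sections of the polarised homogeneous space $M=P/Q$ and composing the two local diffeomorphisms, and the converse by pushing forward the right-invariant (hence contact) vector fields of the other group through the contactomorphism into the Tanaka prolongation, using nonvanishing to get transversality to $\frk q$ and hence the graph of a linear map $\sigma:\frk g\to\frk q$. The bookkeeping points you flag (germs vanishing at the base point correspond to $\frk q$, and the $\frk g$-component records the value at the base point) are exactly the facts the paper relies on.
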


Theorem~\ref{thm5ea02b09} is restated and proven in  Theorems~\ref{thm:mainone} and~\ref{teo03291510}.
A key tool in the study of local contactomorphisms is the quotient manifold $M=P/Q$, where $P$ and $Q<P$ are the Lie groups with Lie algebras $\frk p$ and $\frk q$ respectively.
The polarisation of $G$ induces a polarisation $\Delta_M$ on $M$ and $G$ embeds in $M$ as an open subset, see Proposition~\ref{prop03271803}.
Moreover, if $S$ is a modification of $G$, then an open neighborhood of the identity in $S$ can be also embedded into $M$, see Lemma~\ref{lem04071119}.
The composition of such embeddings induce a local contactomorphism between $G$ and $S$.
If the group $G$ is rigid, i.e., its full tanaka prolongation is finite dimensional, then all contactomorphisms between $G$ and $S$ arise in this way, see Theorem~\ref{teo03291510}. 

The rigid case is particularly favorable because all contact diffeomorphisms of $G$ are induced by affine maps on $P$, see~\eqref{eq5e904c47} at page~\pageref{eq5e904c47}.
We can express this rigidity in terms of local contactomorphisms of $M$.
More precisely, we will prove in Theorem~\ref{thm5e905026} the following statement:

\begin{thm}\label{thm5ea02f5e}
	Suppose $G$ is rigid and let $M=P/Q$ be the manifold described above, where the Lie algebra of $P$ is the full tanaka prolongation of $\frk g$.
	Let $U\subset M$ be open and connected and $f:U\to f(U)\subset M$ a smooth map with $df(\Delta_M)\subset \Delta_M$. 
	Suppose that there exists $x_0\in U$ such that $df(x_0)$ is non-singular.
	Then there exists a unique contact diffeomorphism $g:M\to M$ such that $g|_U=f$.
\end{thm}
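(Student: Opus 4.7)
The plan is to combine the transitive contact action of $P$ on $M=P/Q$ with the rigidity of $G$ via an open-and-closed argument on $U$.

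First I would reduce to a situation where rigidity can be applied directly. Left translations $L_p$ on $M$ are contact diffeomorphisms (since $\Delta_M$ is $P$-invariant) and act transitively, so after pre- and post-composing $f$ with suitable $L_{p_1},L_{p_2}$ I may assume that both $x_0$ and $f(x_0)$ lie in the open subset $G\hookrightarrow M$ provided by Proposition~\ref{prop03271803}. Non-singularity of $df(x_0)$ then makes $f$ a local contact diffeomorphism between open neighborhoods inside $G$, and the explicit form \eqref{eq5e904c47} of contact diffeomorphisms of a rigid stratified group produces some $p\in P$ such that $f=L_p$ on an open neighborhood $V\subset U$ of $x_0$. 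Setting $g:=L_p:M\to M$ gives the candidate global extension.

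The main step is to propagate the identity $f=g$ from $V$ to all of $U$. Define $W:=\{x\in U : f\equiv g \text{ on some neighborhood of }x\}$; this is open by construction and contains $V$. To prove that $W$ is closed in $U$, take $x\in\overline{W}\cap U$ and pick $x_n\in W$ with $x_n\to x$. Smoothness of $f$ and $g$ forces all partial derivatives to coincide at each $x_n$, hence at $x$, so in particular $df(x)=dg(x)$ is non-singular. Therefore $f$ is a local contact diffeomorphism near $x$, and the rigidity reduction applied at $x$ in place of $x_0$ yields some $p'\in P$ with $f=L_{p'}$ on a neighborhood $V'$ of $x$. On the nonempty open set $V'\cap W$ the two translations $L_p$ and $L_{p'}$ coincide, and real-analyticity of the $P$-action on $M$ forces $L_p=L_{p'}$ globally on $M$. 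Thus $f=g$ on $V'$, so $x\in W$; connectedness of $U$ then gives $W=U$.

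Uniqueness follows by the same mechanism: any two global contact extensions of $f|_U$ agree on the nonempty open set $U$, and the rigidity-plus-analyticity argument just used forces them to coincide on all of $M$. I expect the main technical point to be the careful invocation of \eqref{eq5e904c47}: one needs the formula to associate an essentially unique element of $P$ to a local contactomorphism of $G$ near a given point, so that the two applications of the rigidity reduction (at $x_0$ and at a boundary point $x$) produce elements of $P$ that can be consistently matched up via the analyticity argument.
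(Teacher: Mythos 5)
Your overall architecture is the same as the paper's: establish the extension locally near $x_0$ using rigidity, then propagate it over $U$ by an open--closed argument in which non-singularity of the differential at boundary points of the agreement set is deduced by continuity from the global extension. The paper does exactly this, with the local step isolated as Lemma~\ref{lem04261852}.

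There is, however, a genuine error in your local step. You assert that \eqref{eq5e904c47} produces $p\in P$ with $f=L_p$ near $x_0$, and you take $g:=L_p$ as the global extension. But \eqref{eq5e904c47} says $f(\pi(x))=\pi(p^f L^f(x))$ where $L^f$ is a Lie group automorphism of $P$ with $(L^f)_*\in\Aut(\frk p,\frk g)$: a contactomorphism of $M$ is an \emph{affine} map, not in general a left translation. The two coincide only when $L^f$ is conjugation by an element of $Q$ (then $\pi(p\,qxq^{-1})=(pq).\pi(x)$), i.e.\ when $(L^f)_*$ lies in the identity component $\Ad_Q$ of $\Aut(\frk p,\frk g)$ described in Theorem~\ref{thm5eb588e6}. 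For $G=F_{2,4}$, say, the graded automorphism of $\frk f_{24}$ swapping the two generators induces a contactomorphism fixing $o$ that is not $L_p$ for any $p\in P$, since the stabiliser $Q$ acts on $\frk g_{-1}$ through the connected group $\Ad_Q$ while the swap has determinant $-1$. This falsifies both "$g:=L_p$" and the later step "analyticity forces $L_p=L_{p'}$". The repair is routine --- replace left translations by affine maps $x\mapsto\pi(p^fL^f(x))$ throughout and use the uniqueness of the pair $(p^f,L^f)$ from Lemma~\ref{lem04261852} to match the germs obtained at $x_0$ and at a boundary point --- but as written the proof is incorrect. A second, lesser issue: \eqref{eq5e904c47} is the conclusion of the theorem you are proving, so citing it for the local step is circular; the actual input is Tanaka's identification of contact vector fields with $\frk p^\dagger$ (Theorem~\ref{thm05101222}), from which the local rigidity of contact \emph{diffeomorphisms} (Lemma~\ref{lem04261852}) is derived by integrating the pushed-forward right-invariant fields. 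Your proposal leaves that derivation entirely to the black box.
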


We will also prove that, in the hypothesis of Theorem~\ref{thm5ea02f5e}, the connected component of the identity in the group of contact diffeomorphisms of $M$ is isomorphic to $Q$, see Theorem~\ref{thm5eb588e6}.

It remains open whether the second part of Theorem~\ref{thm5ea02b09} holds true without asking that the full Tanaka prolongation is finite. While we cannot prove the theorem in this generality, examples suggest that it may be true. More precisely,  in Subsection~\ref{heisenberg}, we show that all three dimensional sub-Riemannian structures are modifications of the Heisenberg group with respect to a suitable finite dimensional Tanaka prolongation, even though  the full prolongation of the Heisenberg Lie algebra is infinite dimensional, see Theorem~\ref{thm5ea01a1d}.
This justifies the following conjecture.

\begin{conjecture}
Suppose that $G$ is a stratified Lie group and that $S$ is a polarised Lie group that is locally contactomorphic to $G$. Then there is a finite Tanaka prolongation of ${\rm Lie}(G)$ in which ${\rm Lie}(S)$ is a modification of ${\rm Lie}(G)$.
\end{conjecture}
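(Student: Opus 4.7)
The plan is to extend the argument of Theorem~\ref{teo03291510} beyond the rigid case by embedding $\frk{s}$ into the (possibly infinite-dimensional) full Tanaka prolongation of $\frk g$ and then showing that the embedding already lies inside some finite Tanaka subprolongation.

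Given a local contactomorphism $f\colon U\to f(U)$ with $U\subset G$ open, $1_G\in U$, and $f(1_G)=1_S$, the first step is to pull back the left-invariant vector fields of $S$ to contact vector fields on $U$. This gives an injective Lie algebra anti-homomorphism $\iota\colon\frk s\hookrightarrow\cont(U,\Delta_G|_U)$. By the Tanaka correspondence between infinitesimal automorphisms of a regular distribution of constant type and its universal prolongation, evaluation of infinite jets at $1_G$ embeds $\cont(U,\Delta_G|_U)$ into the full Tanaka prolongation $\frk p^\infty=\frk g\oplus\frk q^\infty$ of $\frk g$. Composing yields $\iota\colon\frk s\hookrightarrow\frk p^\infty$. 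Because $f$ is a local diffeomorphism at $1_G$, the subspace $\iota(\frk s)$ is transversal to $\frk q^\infty$ and has dimension $\dim\frk g$, hence it is the graph of a unique linear map $\sigma\colon\frk g\to\frk q^\infty$ with finite-dimensional image; the polarisation of $S$ corresponds under $\iota$ to $\{v+\sigma(v):v\in V\}$ where $V=\frk g_{-1}$.

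The core task is then to construct a finite-dimensional graded subalgebra $\frk q\subset\frk q^\infty$ such that $\frk p:=\frk g\oplus\frk q$ is itself a Tanaka prolongation in the sense of this paper, and such that $\iota(\frk s)\subset\frk p$. A natural candidate is to let $\frk g_0$ be the Lie subalgebra of $\frk q_0^\infty$ generated by $\mathrm{pr}_{\frk q_0^\infty}\iota(\frk s)$, take $\frk p$ to be the Tanaka prolongation of $\frk g$ through $\frk g_0$, and verify (i) finite-dimensionality of $\frk p$, (ii) the containment $\sigma(\frk g)\subset\frk q$, and (iii) that $\iota(\frk s)$ is a modification of $\frk g$ in $\frk p$. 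Items (ii) and (iii) should follow by formal bracket manipulations in $\frk p^\infty$ once finiteness is established, although the construction of $\frk g_0$ may have to be iteratively enlarged to absorb the projections to higher-degree components along which $\sigma$ is non-trivial.

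The hard part, and the essential content of the conjecture, is step (i): showing that a suitably chosen truncation $\frk g_0$ leads to a terminating prolongation. For the same symbol $\frk g$, some choices (such as the full strata-preserving derivations of the Heisenberg algebra) yield an infinite prolongation, so truncation is unavoidable. The underlying algebraic claim one needs is: every finite-dimensional graded subalgebra of $\frk p^\infty$ transversal to $\frk q^\infty$ is contained in a finite Tanaka prolongation of $\frk g$. The computation in Subsection~\ref{heisenberg} provides a concrete template, where the image of $\sigma$ selects a proper Lie subalgebra of the full degree-zero derivations and the resulting prolongation happens to terminate. I expect the general argument to proceed by identifying, from the isotropy representation of $\frk g_0$ on $\frk g_{-1}$, a structural obstruction to extending non-trivial cocycles to arbitrary degree — isolating this termination mechanism intrinsically, rather than by case analysis, is where I anticipate the genuine difficulty of the conjecture to lie.
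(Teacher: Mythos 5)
This statement is stated in the paper as a \emph{conjecture}: the authors explicitly say they cannot prove it in this generality, and the paper contains no proof of it, only supporting evidence (the Heisenberg case in Subsection~\ref{heisenberg} and the rigid case, Theorem~\ref{teo03291510}). Your proposal is therefore not being measured against an existing argument, and, as written, it is not a proof either: you yourself flag that step (i) --- producing a subalgebra $\frk g_0\subset\frk q_0^\infty$ whose Tanaka prolongation both terminates and contains $\sigma(\frk g)$ --- is ``the essential content of the conjecture'' and that you only ``anticipate'' where the difficulty lies. The auxiliary claim you reduce to, namely that every finite-dimensional graded subalgebra of $\frk p^\infty$ transversal to $\frk q^\infty$ sits inside a finite Tanaka prolongation of $\frk g$, is essentially a restatement of the conjecture, not a reduction of it; no mechanism forcing termination is identified. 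Note also that you cannot freely ``enlarge $\frk g_0$ to absorb'' the positive-degree components of $\sigma$: in Tanaka's construction the layers $\frk g_k$ for $k\ge1$ are determined by $\frk g_0$ through the prolongation procedure, so containment of $\sigma(\frk g)$ in $\mathrm{Prol}(\frk g,\frk g_0)$ is a constraint to be verified, not arranged.

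There is a further gap in your first step. The identification of germs of contact vector fields with the prolongation algebra (Theorem~\ref{thm05101222}) is invoked in the paper only under the rigidity hypothesis. In the non-rigid case the algebra of contact vector fields is infinite dimensional, and for merely smooth (non-analytic) data the evaluation of infinite jets at $1_G$ need not be injective on $\cont(U)$: a nonzero contact vector field can be flat at a point. Since the contactomorphism $f$ is only assumed smooth, the pushed-forward left-invariant fields of $S$ need not be analytic on $U\subset G$, so your map $\iota:\frk s\to\frk p^\infty$ is not obviously injective, and transversality of $\iota(\frk s)$ to $\frk q^\infty$ does not follow as in the proof of Theorem~\ref{teo03291510}. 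Your overall strategy (embed into the full formal prolongation, then truncate) is the natural one and matches the spirit of the paper's evidence, but both the embedding step and the termination step remain open.
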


In Subsection~\ref{freemod} we explicitly compute some modifications of the free nilpotent Lie group with two generators and step four, $F_{24}$. It comes out that one may construct  examples of non-nilpotent Lie groups that are contactomorphic to  $F_{24}$. We also find a nilpotent, non-stratified, polarised Lie group that is globally contactomorphic to $F_{24}$, see Theorem~\ref{thm5ea02e50}.
Finally, in Subsection~\ref{ultra}, we study all the modifications of an ultra-rigid stratified group, that is, a stratified group whose only strata-preserving derivation is the infinitesimal generator of dilations. It turns out that such modifications are all solvable and the only nilpotent one is the stratified group itself, see Theorem~\ref{thm5ea02e33}.

The paper is organized as follows.
In Section~\ref{prel}, we fix the notation and establish the framework in which we will be working. We consider stratified  algebras and their Tanaka prolongations, we define the corresponding Lie groups and fix a polarisation on them.
In Section~\ref{sec5ea02faf}, we study contactomorphisms of $M$ as affine maps of $P$ and prove Theorem~\ref{thm5ea02f5e}.
In Section~\ref{mod}, we define the modifications of a stratified algebra and those of a stratified group, proving Theorem~\ref{thm5ea02b09}.
Finally, we apply our modification technic to a number of examples in Section~\ref{exsection}.

\section{Notation and preliminaries}\label{prel}
\subsection{Polarizations and Tanaka prolongations}
Given a connected, smooth manifold $M$, a {\it polarisation} of $M$ is the choice of a subbundle $\Delta_M$ of the tangent bundle $TM$ that is \emph{bracket generating}, i.e., with the property that the sections of $\Delta_M$ bracket generate all the sections of $TM$. Given two polarised manifolds $(M,\Delta_M)$ and $(N,\Delta_N)$, a \emph{contactomorphism} between 
$M$ and $N$  is a diffeomorphism $f:M\to N$ such that 
$f_*(\Delta_M)=\Delta_N$.
We denote by $\Gamma(TM)$ the space of vector fields on $M$.
A vector field $V\in\Gamma(TM)$ on a polarised manifold $(M,\Delta_M)$ is a \emph{contact vector field} if its flow is made of contactomorphisms.
For a Lie group $S$, we shall always consider left-invariant polarisations $\Delta_S$. The pair $(S,\Delta_S)$ is called a {\it polarised group}. 
The identity element will be denoted by $e_S$, or simply $e$ if no confusion arises.
 We  denote by $G$ a \emph{stratified group}, that is, a connected and simply connected  Lie group whose
 Lie algebra decomposes as $\frk g=\bigoplus_{i=-s}^{-1}\frk g_{i}$, with $[\frk g_{-1},\frk g_{j}]=\frk g_{j-1}$ for every $-s+1\leq j\leq -1$. On a stratified group we will always consider the left-invariant polarisation $\Delta_G$ for which $(\Delta_G)_{e_G}=\frk g_{-1}$.
In  a stratified group $G$ we consider the strata preserving derivations
\begin{align*}
{\rm Der}(\frk g):= \{u\in {\rm End}({\frk g}) & \,:\, u({\frk g}_{-1})\subset  {\frk g}_{-1}, \\
  \text{ and } & u[X,Y]=[u(X),Y]+[X,u(Y)]\, \forall X,Y\in  {\frk g}\}.
\end{align*}
Given  a subalgebra ${\frk g}_0$ of  ${\rm Der}(\frk g)$, we define the \emph{Tanaka prolongation} of ${\frk g}$ through ${\frk g}_{0}$ as the (possibly infinite) maximal graded Lie algebra ${\rm Prol}({\frk g}, {\frk g}_{0})=\bigoplus_{k\geq -s} {\frk g}_{k}$ which contains $\frk g\oplus \frk g_0$. When ${\frk g}_{0}={\rm Der}(\frk g)$, we  call ${\rm Prol}({\frk g}, {\frk g}_0)$ the {\it full} Tanaka prolongation of ${\frk g}$. It is not difficult to see that the latter contains all prolongations. 
We say that $\frk g$, or $G$, is {\it rigid} if the full Tanaka prolongation has finite dimension.
When it is clear from the context and the prolongation under consideration is finite dimensional, we shall denote ${\rm Prol}({\frk g}, {\frk g}_{0})$  by ${\frk p}$, 
the nonnegative part $\bigoplus_{k\geq 0} {\frk g}_{k}$ by $\frk q$, and the positive part
$\bigoplus_{k> 0} {\frk g}_{k}$ by ${\frk p}_+$. See \cite{MR2917692,MR0266258, MR1274961} for further details on Tanaka prolongation. 

\subsection{The groups $P$ and $Q$ and their quotient $M$}
In the following, we establish a number of properties of the Lie groups that correspond to the Lie algebras introduced above. Let $\bar P$ be the connected and simply connected Lie group whose Lie algebra is a finite dimensional Tanaka prolongation $\frk p$ of a stratified Lie algebra $\frk g$. 
Let $\bar Q$ be the connected subgroup of $\bar P$ whose Lie algebra is $\frk q$.

The set $\{ \delta_\lambda : \lambda>0\}$ of mappings on $\frk p$ defined by $ \delta_\lambda(X)=\lambda^i X$ for $X\in \frk g_i$ is 
 a one parameter family of automorphisms of $\frk p$. By abuse of notation, we write $\delta_\lambda$ for the corresponding automorphisms of the group $\bar P$.
Such maps exist because $\bar P$ is simply connected.
 
\begin{lemma}\label{lem11121335} 
	Denote by $\exp_P:\frk p\to \bar P$ the exponential map of $\bar P$.
	Then $\exp_P$ is injective on $\frk g$ and on $ì\bigoplus_{k\geq 1} {\frk g}_{k}$. 
\end{lemma}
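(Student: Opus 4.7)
The plan is to reduce the global injectivity of $\exp_P$ on the two nilpotent subalgebras $\frk g$ and $\frk p_+=\bigoplus_{k\geq 1}\frk g_k$ to the local injectivity of $\exp_P$ near $0\in\frk p$, by exploiting the dilations $\delta_\lambda$. Since $\exp_P$ is a local diffeomorphism at the origin, there is an open neighborhood $U$ of $0\in\frk p$ on which $\exp_P|_U$ is one-to-one; the dilations will be used to contract any candidate pair of vectors into $U$.

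The key identity is the naturality of the exponential map under Lie group automorphisms: since $\delta_\lambda$ is an automorphism of $\bar P$ whose differential at the identity is the given graded automorphism of $\frk p$ (denoted by the same symbol), one has
\[ \exp_P(\delta_\lambda X)\;=\;\delta_\lambda(\exp_P X)\qquad\text{for every } X\in\frk p,\ \lambda>0. \]
Now suppose $X,Y\in\frk g$ satisfy $\exp_P(X)=\exp_P(Y)$. Applying $\delta_\lambda$ and using this identity gives $\exp_P(\delta_\lambda X)=\exp_P(\delta_\lambda Y)$ for every $\lambda>0$. Writing $X=\sum_{i<0}X_i$ with $X_i\in\frk g_i$, we have $\delta_\lambda X=\sum_{i<0}\lambda^i X_i\to 0$ as $\lambda\to+\infty$, and analogously for $Y$; thus for $\lambda$ large enough both $\delta_\lambda X$ and $\delta_\lambda Y$ lie in $U$, whence $\delta_\lambda X=\delta_\lambda Y$, and $X=Y$ follows by applying $\delta_{1/\lambda}$. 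The argument for $X,Y\in\frk p_+$ is identical except one takes $\lambda\to 0^+$, so that $\delta_\lambda X=\sum_{i\geq 1}\lambda^i X_i\to 0$ in this regime.

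There is no genuine obstacle in this strategy. The only point worth noticing is that the two halves of the lemma are handled by the two complementary directions $\lambda\to+\infty$ and $\lambda\to 0^+$ of the same one-parameter family of dilations, reflecting the fact that homogeneous vectors of negative (resp.\ positive) degree are contracted by $\delta_\lambda$ as $\lambda\to+\infty$ (resp.\ $\lambda\to 0^+$). This contraction trick cannot be applied to an element of $\frk p$ with components of both signs, which is precisely why the lemma restricts to $\frk g$ and $\frk p_+$ separately.
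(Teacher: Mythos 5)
Your proof is correct and follows essentially the same route as the paper: both arguments use the identity $\exp_P(\delta_\lambda X)=\delta_\lambda(\exp_P X)$ to contract a candidate pair of vectors into a neighborhood of $0$ where $\exp_P$ is injective, taking $\lambda\to+\infty$ on $\frk g$ and (as the paper leaves implicit in its ``similar argument'' remark) $\lambda\to 0^+$ on $\bigoplus_{k\geq 1}\frk g_k$. Your closing observation about why the trick fails for vectors with homogeneous components of both signs is a nice explicit touch, but the substance of the argument is identical.
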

\begin{proof}
	Let $v,w\in\frk g$ such that $\exp_P(v)=\exp_P(w)$.
	Since $v,w\in\frk g$, then $\lim_{\lambda\to\infty}\delta_\lambda v =\lim_{\lambda\to\infty}\delta_\lambda w = 0$.
	Let $\lambda\ge1$ be such that both $\delta_\lambda(v)$ and $\delta_\lambda(w)$ belong to a neighborhood $U$ of $0$ in $\frk p$ on which the exponential map $\exp_P$ is injective.
	Then $\exp_P(\delta_\lambda v) = \delta_\lambda(\exp_P(v)) = \delta_\lambda(\exp_P(w)) = \exp_P(\delta_\lambda w)$.
	By the injectivity of $\exp_P$ on $U$, we have $\delta_\lambda v = \delta_\lambda w$.
	Since $\delta_\lambda$ is a linear isomorphism, we conclude that $v=w$.
A similar argument proves that $\exp_P$ is injective on $\bigoplus_{k\geq 1} {\frk g}_{k}$. 
\end{proof}

By Lemma~\ref{lem11121335}, the canonical immersion $G\into \bar P$ induced by $\frk g\into \frk p$ is injective.
We are going to show that $G$ is closed in $\bar P$. We prove two lemmas first.

\begin{lemma}\label{lem11121630}
	The intersection of $G$ with $\bar Q$ is trivial.
\end{lemma}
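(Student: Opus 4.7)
The plan is to exploit the dilations $\delta_\lambda$, which act as automorphisms of $\bar P$ and preserve both $G$ and $\bar Q$, combined with the fact that $\frk g$ and $\frk q$ intersect trivially as graded subspaces of $\frk p$.

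First, I would note that $G$ is a nilpotent Lie group (since $\frk g = \bigoplus_{i=-s}^{-1} \frk g_i$ is a nilpotent Lie algebra), so the restriction $\exp_P|_{\frk g} \colon \frk g \to G$ is surjective; combined with Lemma~\ref{lem11121335}, it is a bijection. Next, I would verify that both $G$ and $\bar Q$ are invariant under $\delta_\lambda$ for every $\lambda>0$: since $\delta_\lambda$ is an automorphism of $\bar P$, the image $\delta_\lambda(G)$ (resp.~$\delta_\lambda(\bar Q)$) is a connected Lie subgroup whose Lie algebra is $d\delta_\lambda(\frk g)=\frk g$ (resp.~$d\delta_\lambda(\frk q)=\frk q$), so the images coincide with $G$ and $\bar Q$ respectively.

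Now take $g \in G \cap \bar Q$ and write $g = \exp_P(v)$ for the unique $v\in \frk g$. The key point is that for $v \in \frk g = \bigoplus_{i<0}\frk g_i$ one has $\delta_\lambda v \to 0$ as $\lambda \to \infty$, so $\delta_\lambda g = \exp_P(\delta_\lambda v)$ tends to $e$. On the other hand, $\delta_\lambda g \in \bar Q$, and near the identity the connected subgroup $\bar Q$ is parametrised by $\exp_P$ on a small neighbourhood of $0$ in $\frk q$. Hence for $\lambda$ sufficiently large there exists $q\in\frk q$, small, such that $\exp_P(\delta_\lambda v) = \exp_P(q)$.

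Finally, I would fix once and for all a neighbourhood $W$ of $0$ in $\frk p$ on which $\exp_P$ is injective (inverse function theorem), and arrange $\lambda$ large enough so that both $\delta_\lambda v$ and $q$ lie in $W$. Injectivity of $\exp_P$ on $W$ forces $\delta_\lambda v = q$, but $\delta_\lambda v \in \frk g$ while $q \in \frk q$, and $\frk g \cap \frk q = \{0\}$ as a direct consequence of the grading $\frk p = \bigoplus_{k\geq -s}\frk g_k$. Therefore $\delta_\lambda v = 0$, hence $v=0$ because $\delta_\lambda$ is invertible, giving $g=e$. The main (mild) obstacle is ensuring that the neighbourhood of the identity in $\bar Q$ is exactly $\exp_P$ of a neighbourhood of $0$ in $\frk q$; this is standard, but one must be careful that $\bar Q$ need not be closed a priori, so the argument must stay local near $e$ and rely on the dilation $\delta_\lambda$ to push $g$ into that local chart.
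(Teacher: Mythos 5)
There is a genuine gap at the step where you pass from ``$\delta_\lambda g\to e_P$ in $\bar P$ and $\delta_\lambda g\in\bar Q$'' to ``$\delta_\lambda g=\exp_P(q)$ for some small $q\in\frk q$.'' At this point of the paper $\bar Q$ is only known to be a connected \emph{immersed} subgroup (its closedness is proved later, as a corollary of this very lemma), and for an immersed subgroup $H\le \bar P$ it is false in general that elements of $H$ that are close to $e_P$ \emph{in the topology of $\bar P$} lie in the local slice $\exp_P(V)$, $V$ a small neighbourhood of $0$ in $\operatorname{Lie}(H)$: think of the irrational line $H$ in the torus, which contains points arbitrarily close to $e$ in the ambient topology that are nowhere near $e$ in the intrinsic topology of $H$. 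The statement ``near the identity $\bar Q$ is parametrised by $\exp_P$ on a small neighbourhood of $0$ in $\frk q$'' is true only for the intrinsic Lie group topology of $\bar Q$, whereas the dilations give convergence $\delta_\lambda g\to e_P$ only in $\bar P$. You flag exactly this danger in your last sentence, but the dilation does not resolve it: $\delta_\lambda$ pushes $g$ towards $e_P$ in $\bar P$, not into the local chart of $\bar Q$. (On the $G$-side there is no problem, since there you know explicitly that $\delta_\lambda g=\exp_P(\delta_\lambda v)$ with $\delta_\lambda v\to 0$ in $\frk g$.)

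The paper closes this gap with a connectedness argument that you would need to add. One chooses neighbourhoods $U\subset\frk g$, $V\subset\frk q$ of $0$ so that $\Omega=\exp_P(U)\exp_P(V)$ is open in $\bar P$, the connected component of $e_P$ in $\Omega\cap G$ is $\exp_P(U)$, the connected component of $e_P$ in $\Omega\cap\bar Q$ is $\exp_P(V)$, and $\exp_P(U)\cap\exp_P(V)=\{e_P\}$. The curve $\gamma(t)=\delta_{t^{-1}}g$ extends continuously to $t=0$ with $\gamma(0)=e_P$ and has image in $G\cap\bar Q$; the initial portion $\gamma([0,t])$ lying in $\Omega$ is a \emph{connected} subset of both $\Omega\cap G$ and $\Omega\cap\bar Q$ containing $e_P$, hence lies in $\exp_P(U)\cap\exp_P(V)=\{e_P\}$, and a continuity/openness argument then forces $\gamma\equiv e_P$, i.e.\ $g=e_P$. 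In short: you must use that the whole dilation orbit of $g$ connects $g$ to $e_P$ inside $\bar Q$, not merely that individual points $\delta_\lambda g$ are metrically close to $e_P$ in $\bar P$. With that replacement your remaining steps (injectivity of $\exp_P$ on a small set and $\frk g\cap\frk q=\{0\}$) go through.
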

\begin{proof}
	Since $\delta_\lambda(\frk g)=\frk g$ and $\delta_\lambda(\frk q)=\frk q$, then $\delta_\lambda(G)= G$ and $\delta_\lambda(\bar Q)=\bar Q$, for all $\lambda>0$.
	Since $\frk g$ is nilpotent, $G=\exp_P(\frk g)$.
	
	Let $x\in  G\cap \bar Q$; then $x=\exp_P(v)$ for some $v\in\frk g$ and $\lim_{\lambda\to\infty}\delta_\lambda(x) = \exp_P(\lim_{\lambda\to\infty}\delta_\lambda v) = e_P$.
	It follows that the curve $\gamma:(0,1]\to \bar P$, $\gamma(t) = \delta_{t^{-1}}x$, extends to a continuous path $[0,1]\to \bar P$ connecting $\gamma(0)=e_P$ to $\gamma(1)=x$ and laying in $G$.
	Since $\delta_\lambda(x)\in \bar Q$ for all $\lambda>0$, then $\gamma$ lies in $\bar Q$ as well.
	 	 
	 Since $\frk g\oplus\frk q=\frk p$, there are open neighborhoods $U\subset \frk g$ and $V\subset \frk q$ of $0$ such that $\Omega=\exp_P(U)\exp_P(V)$ is an open neighborhood of $e_P$ in $\bar P$ and the following holds:
	 The connected component of $\Omega\cap G$ containing $e_P$ is $\exp_P(U)$,
	 the connected component of $\Omega\cap \bar Q$ containing $e_P$ is $\exp_P(V)$, 
	 and $\exp_P(U)\cap\exp_P(V)=\{e_P\}$.
	 
	 Since $\gamma$ joins $x$ to $e_P$ continuously, then $\gamma([0,1])\cap \Omega$ lies in both the connected components of $\Omega\cap G$ and $\Omega\cap \bar Q$ containing $e_P$, i.e., $\gamma([0,1])\cap \Omega \subset \exp_P(U)\cap\exp_P(V)=\{e_P\}$.
	This implies that $x=e_P$.
\end{proof}

\begin{lemma}[Lemma on Lie groups]\label{lem11121512}
Let $G$ be a Lie subgroup of a Lie group $P$ and let $\iota : G\into P$ the inclusion.	
	The image $\iota(G)$ is not closed in $P$ if and only if there is a sequence $\{g_n\}_{n\in\N}\subset G$ such that $\lim_{n\to\infty} g_n = \infty$ and $\lim_{n\to\infty} \iota(g_n) = e_P$.
\end{lemma}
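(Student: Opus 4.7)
The plan is to reduce the statement to the classical dichotomy that, for a Lie subgroup $\iota:G\into P$, the image $\iota(G)$ is closed in $P$ if and only if $\iota$ is a topological embedding. This equivalence rests on two standard facts: every embedded submanifold of $P$ is locally closed in $P$, while a subgroup of a topological group that is locally closed is automatically closed (being open in its closure forces it to equal its closure). Cartan's closed subgroup theorem supplies the converse, that a closed subgroup is in fact embedded.

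For the ``if'' direction, suppose $g_n \in G$ satisfies $g_n \to \infty$ in $G$ and $\iota(g_n) \to e_P$ in $P$, and assume for contradiction that $\iota(G)$ is closed. Then $\iota(G)$ is an embedded Lie subgroup of $P$ of the same dimension as $G$; the smooth injective homomorphism $\iota: G \to \iota(G)$ has bijective differential at $e_G$ (by the immersion property and equality of dimensions), so the inverse function theorem combined with left-translation makes $\iota$ a diffeomorphism onto $\iota(G)$. Hence $\iota(g_n) \to e_P$ in $\iota(G)$ lifts to $g_n \to e_G$ in $G$, contradicting $g_n \to \infty$. For the ``only if'' direction, suppose $\iota(G)$ is not closed, so $\iota$ is not an embedding; then $\iota^{-1} : \iota(G) \to G$ fails to be continuous, and by translation invariance under the homomorphism structure the failure can be read at $e_P$. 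This produces a neighborhood $V$ of $e_G$ in $G$ and a sequence $g_n \in G \setminus V$ with $\iota(g_n) \to e_P$. To upgrade ``$g_n \notin V$'' to ``$g_n \to \infty$'', suppose on the contrary that some subsequence $g_{n_k}$ remains in a compact $K \subset G$; extracting further we may assume $g_{n_k} \to g \in K$, whence continuity of $\iota$ gives $\iota(g) = e_P$ and thus $g = e_G$ by injectivity of $\iota$. But then $g_{n_k}$ eventually enters $V$, contradicting the choice of the sequence.

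The main obstacle is to navigate cleanly between the intrinsic Lie group topology on $G$ (with respect to which $\iota$ is merely an immersion) and the subspace topology on $\iota(G)\subset P$, and in particular to correctly invoke the subgroup structure when passing from ``locally closed'' to ``closed''. This is precisely where an arbitrary immersed submanifold (such as an irrational line in a torus) could fail to be closed despite being locally embedded, whereas the subgroup hypothesis forecloses that possibility and makes the equivalence between non-closedness and the existence of the escaping sequence go through.
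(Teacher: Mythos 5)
Your proof is correct, and for the ``if'' direction it follows essentially the same line as the paper: both arguments rest on the dichotomy that a Lie subgroup is closed if and only if the inclusion $\iota$ is a topological embedding, so a sequence with $g_n\to\infty$ and $\iota(g_n)\to e_P$ rules out closedness. For the converse you take a genuinely different route. The paper works metrically: it equips $G$ with a complete left-invariant Riemannian distance $\rho$ (so closed balls are compact), picks a sequence with $\iota(g_n)\to p\in\overline{\iota(G)}\setminus\iota(G)$, notes that boundedness of $\{g_n\}$ would force $p\in\iota(G)$ so that $g_n\to\infty$, and then manufactures a sequence whose image converges to $e_P$ by the difference trick $h_k=g_{n_k}^{-1}g_{n_{k+1}}$ with $\rho(g_{n_k},g_{n_{k+1}})>k$, left-invariance of $\rho$ guaranteeing $h_k\to\infty$. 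You instead exploit the other half of the dichotomy (an embedded subgroup is locally closed, hence closed): non-closedness forces $\iota$ to fail to be open at the identity, which hands you directly a neighbourhood $V$ of $e_G$ and a sequence $g_n\notin V$ with $\iota(g_n)\to e_P$, and your compactness-plus-injectivity argument correctly upgrades $g_n\notin V$ to $g_n\to\infty$. Your version dispenses with the auxiliary Riemannian metric and the quotient trick, at the price of invoking both implications of the closed-iff-embedded equivalence rather than only the Cartan direction used in the paper; both arguments are complete.
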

\begin{proof}
	Recall that $G$ is closed in $P$ if and only if $\iota$ is an embedding.
	So, if such a sequence exists then $\iota(G)$ is not closed in $P$.
	We need to prove the converse implication.

	Let   $\rho$ be any left-invariant Riemannian distance on $G$.
	Then $\rho$ is complete and in particular closed balls are compact.
	Let $\{g_n\}_{n\in\N}\subset G$ be a sequence such that $\lim_{n\to\infty} \iota(g_n) = p\in P$.
	If there is $R>0$ such that $\rho(e_G,g_n)\le R$ for all $n$, then there is a subsequence $g_{n_k}$ converging to some $g_\infty\in G$.
	Since the immersion $\iota : G\into P$ is continuous, we obtain $\iota(g_\infty)=p$, hence $p\in\iota(G)$.
	
	So, if $\iota(G)$ is not closed, then there is a sequence $\{g_n\}_{n\in\N}\subset G$ such that $\lim_{n\to\infty} \iota(g_n) = p\in P$ but $g_n\to\infty$ in $G$.
	Let $\{g_{n_k}\}_k$ be a subsequence such that $\rho(g_{n_k},g_{n_{k+1}}) > k$ for $k\in\N$ and define $h_k=g_{n_k}^{-1}g_{n_{k+1}}$.
	Then $h_k\to\infty$ in $G$, because $\rho(e_G,h_k) = \rho(e_G,g_{n_k}^{-1}g_{n_{k+1}}) = \rho(g_{n_k},g_{n_{k+1}})>k$ for all $k$.
	However, $\iota(h_k) = \iota(g_{n_k}^{-1})\iota(g_{n_{k+1}}) \to p^{-1}p$ in $P$ as $k\to\infty$.
\end{proof}

\begin{lemma}\label{lem03252021}
	The immersed group $G$ is closed in $\bar P$.
\end{lemma}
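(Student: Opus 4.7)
My plan is to argue by contradiction using Lemma~\ref{lem11121512}. Assume $G$ is not closed in $\bar P$. Then there is a sequence $\{g_n\}\subset G$ with $g_n\to\infty$ in the intrinsic topology of $G$, while $\iota(g_n)\to e_P$ in $\bar P$. I will show that such a sequence cannot exist by producing a contradiction with $g_n\to\infty$.

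First I use the decomposition $\frk p=\frk g\oplus\frk q$ to set up local coordinates near $e_P$. Consider the smooth map $\Phi:\frk g\times\frk q\to\bar P$ defined by $\Phi(v,w)=\exp_P(v)\exp_P(w)$. Its differential at $(0,0)$ is $(v,w)\mapsto v+w$, which is a linear isomorphism; by the inverse function theorem, $\Phi$ restricts to a diffeomorphism from a neighbourhood of $(0,0)$ onto a neighbourhood $W$ of $e_P$ in $\bar P$. For $n$ large, $\iota(g_n)\in W$, so we may write $\iota(g_n)=\exp_P(a_n)\exp_P(b_n)$ with $a_n\in\frk g$, $b_n\in\frk q$ and $(a_n,b_n)\to(0,0)$.

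Next I exploit Lemma~\ref{lem11121630}. Since $a_n\in\frk g$, the element $\exp_P(a_n)$ lies in $\iota(G)$, hence
$\exp_P(b_n)=\exp_P(a_n)^{-1}\iota(g_n)\in\iota(G)\cap\bar Q=\{e_P\}$
by Lemma~\ref{lem11121630}. For $n$ sufficiently large, $b_n$ lies in a neighbourhood of $0\in\frk q$ on which $\exp_P$ is injective, so $b_n=0$. Hence $\iota(g_n)=\exp_P(a_n)=\iota(\exp_G a_n)$, and by injectivity of $\iota$ we get $g_n=\exp_G(a_n)$.

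Finally, since $G$ is connected, simply connected, and nilpotent, $\exp_G:\frk g\to G$ is a global diffeomorphism. From $a_n\to 0$ in $\frk g$ we conclude that $g_n\to e_G$ in the intrinsic topology of $G$, contradicting $g_n\to\infty$ in $G$. I expect no serious obstacle; the only slightly delicate step is the justification that the decomposition $\iota(g_n)=\exp_P(a_n)\exp_P(b_n)$ is valid (requiring the inverse function theorem for $\Phi$) and that $b_n=0$ follows from the combination of Lemmas~\ref{lem11121335} and~\ref{lem11121630}, but both are routine.
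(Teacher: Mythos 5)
Your proof is correct and follows essentially the same route as the paper's: reduce to sequences via Lemma~\ref{lem11121512}, use the local product decomposition $\exp_P(U)\exp_P(W)$ coming from $\frk p=\frk g\oplus\frk q$, kill the $\frk q$-component via Lemma~\ref{lem11121630}, and conclude by injectivity of $\exp_P$ on $\frk g$ (Lemma~\ref{lem11121335}). The only cosmetic difference is that you argue by contradiction where the paper proves the equivalent statement that $\exp_P(v_n)\to e_P$ forces $v_n\to 0$.
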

\begin{proof}
	We  prove that, if $\{v_n\}_{n\in\N}\subset\frk g$ is a sequence so that $\lim_{n\to\infty}\exp_P(v_n)=e_P$, then $\lim_{n\to\infty}v_n=0$.
	By Lemma~\ref{lem11121512} and $\exp_P(\frk g)=G$, this claim implies that $G$ is closed in $P$.

Let $\{v_n\}_{n\in\N}\subset\frk g$ be a sequence with $\lim_{n\to\infty}\exp_P(v_n)=e_P$.
	Let $U\subset\frk g$ and $W\subset\frk q$ be open neighborhoods of $0$ such that the map $U\times W\to P$, $(u,w)\mapsto \exp_P(u)\exp_P(w)$ is a diffeomorphism onto its image.
	Then, for $n$ large enough, there are $u_n\in U$ and $w_n\in W$ so that $\exp_P(u_n)\exp_P(w_n)=\exp_P(v_n)$.
	Therefore, $\exp_P(u_n)^{-1}\exp_P(v_n) = \exp_P(w_n)\in \bar Q\cap G$.
	By Lemma~\ref{lem11121630}, we have $\exp_P(u_n)=\exp_P(v_n)$.
	By Lemma~\ref{lem11121335}, we have $u_n=v_n$.
	Since $\exp_P(u_n)\to e_P$, then $v_n=u_n\to 0$.
\end{proof}

\begin{corollary}
	The immersed group $\bar Q$ is closed in $\bar P$.
\end{corollary}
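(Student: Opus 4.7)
The strategy is to apply Lemma~\ref{lem11121512} to $\bar Q$, thereby reducing closedness to showing that no sequence in $\bar Q$ can escape to infinity while its image in $\bar P$ converges to $e_P$. The argument will mirror the proof of Lemma~\ref{lem03252021}, swapping the roles of $\frk g$ and $\frk q$ inside the local product decomposition of $\bar P$ near $e_P$ provided by $\frk p=\frk g\oplus\frk q$.

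Concretely, given a sequence $\{q_n\}_{n\in\N}\subset\bar Q$ with $\iota(q_n)\to e_P$ in $\bar P$, I would fix open neighborhoods $U\subset\frk g$ and $W\subset\frk q$ of $0$ such that $(u,w)\mapsto \exp_P(u)\exp_P(w)$ is a diffeomorphism from $U\times W$ onto an open neighborhood of $e_P$. For $n$ large, this yields $\iota(q_n)=\exp_P(u_n)\exp_P(w_n)$ with $u_n\to 0$ in $\frk g$ and $w_n\to 0$ in $\frk q$. Since $\exp_P(w_n)$ lies in the immersed subgroup $\bar Q$, the element $\exp_P(u_n)=\iota(q_n)\exp_P(w_n)^{-1}$ belongs to $G\cap\bar Q$; by Lemma~\ref{lem11121630} it equals $e_P$, and then Lemma~\ref{lem11121335} forces $u_n=0$. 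Hence $\iota(q_n)=\exp_P(w_n)$ eventually, and since $\exp_P|_{W}$ is, locally at $0$, the exponential map of $\bar Q$, injectivity of the immersion gives $q_n=\exp_{\bar Q}(w_n)\to e_{\bar Q}$ in the intrinsic topology of $\bar Q$. This rules out the kind of sequence described in Lemma~\ref{lem11121512}, so $\bar Q$ is closed.

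There is no serious obstacle here: the argument is essentially the formal dual of Lemma~\ref{lem03252021}. The only point that must be acknowledged carefully is that the second factor of the local decomposition automatically lives in the immersed subgroup $\bar Q$, which is what routes $\exp_P(u_n)$ into the intersection $G\cap\bar Q$ and unlocks Lemma~\ref{lem11121630}.
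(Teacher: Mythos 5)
Your argument is correct, but it is not the route the paper takes: the paper disposes of this corollary in one line by combining Lemma~\ref{lem03252021} (closedness of $G$) with part~(iii) of Lemma~2.15 of \cite{2017arXiv170509648C}, i.e.\ it outsources the ``complementary subgroup'' step to an external result. You instead give a self-contained proof that is the formal dual of the proof of Lemma~\ref{lem03252021}: you invoke the sequence criterion of Lemma~\ref{lem11121512} for $\bar Q$, decompose $\iota(q_n)=\exp_P(u_n)\exp_P(w_n)$ via the local product structure coming from $\frk p=\frk g\oplus\frk q$, route $\exp_P(u_n)$ into $G\cap\bar Q$ (which is trivial by Lemma~\ref{lem11121630}), kill $u_n$ by Lemma~\ref{lem11121335}, and then use injectivity of the immersion together with $\iota\circ\exp_{\bar Q}=\exp_P|_{\frk q}$ to conclude $q_n=\exp_{\bar Q}(w_n)\to e_{\bar Q}$ intrinsically, so no sequence of the forbidden type exists. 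Every step checks out; note that your argument does not even use the closedness of $G$ itself, only the triviality of $G\cap\bar Q$. What the paper's citation buys is brevity; what your version buys is that the paper would become independent of the quoted lemma from \cite{2017arXiv170509648C} and the symmetry between the roles of $\frk g$ and $\frk q$ in the local decomposition is made explicit, at the cost of essentially repeating the mechanics of the proof of Lemma~\ref{lem03252021}.
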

\begin{proof}
	This is a consequence of
	Lemma~\ref{lem03252021} and part (iii) of Lemma~2.15 in \cite{2017arXiv170509648C}
\end{proof}

Since $\bar Q$ is closed, we may consider the homogeneous manifold $M:=\bar P/\bar Q$ with quotient projection $\pi:\bar P\to M$.
The action of $\bar P$ may have a non-trivial kernel
\[
K :=\{p\in \bar P:p.x=x\ \forall x\in M\} 
  = \bigcap_{p\in\bar P}p\bar Qp^{-1} .
\]
\begin{lemma}\label{lem04061839}
	The kernel $K$ of the action of $\bar P$ on $M$ is discrete and contained in $\bar Q$.
	Moreover, if $p\in K$, then $\delta_\lambda p=p$ for all $\lambda>0$.
\end{lemma}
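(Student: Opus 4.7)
The containment $K\subset \bar Q$ is immediate by taking $p=e_P$ in the intersection defining $K$, so the substantive claim is that $K$ is discrete, equivalently that its Lie algebra $\frk k:=\mathrm{Lie}(K)\subset \frk q$ is trivial. The plan is to exploit the grading of $\frk p$ to reduce this to the characteristic rigidity property of Tanaka prolongation.

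First I would note that $K$ is the largest normal subgroup of $\bar P$ contained in $\bar Q$, so $\frk k$ is an ideal of $\frk p$ sitting inside $\frk q=\bigoplus_{k\ge 0}\frk g_k$. Next I would show $K$ is preserved by each automorphism $\delta_\lambda$: since $\delta_\lambda(\bar Q)=\bar Q$ and $p\mapsto \delta_\lambda(p)$ is a bijection of $\bar P$, reindexing in $K=\bigcap_p p\bar Qp^{-1}$ gives $\delta_\lambda(K)=K$. Passing to Lie algebras, $\frk k$ is invariant under the Lie algebra automorphism $\delta_\lambda$ for every $\lambda>0$. Because $\delta_\lambda$ is diagonalisable on $\frk p$ with eigenspaces exactly the graded pieces $\frk g_k$, this invariance forces $\frk k$ to be a graded subspace: $\frk k=\bigoplus_{k\ge 0}(\frk k\cap \frk g_k)$.

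The core step is then an induction on $k\ge 0$ using the defining property of Tanaka prolongation, namely that an element $X\in \frk g_k$ with $[X,\frk g_{-1}]=0$ must vanish (for $k=0$ this is because a strata-preserving derivation is determined by its restriction to the generating set $\frk g_{-1}$, and for $k\ge 1$ it is the Tanaka characterisation of the prolongation). For the base case $k=0$, any $X\in \frk k\cap \frk g_0$ satisfies $[X,\frk g_{-1}]\subset \frk k\cap \frk g_{-1}=0$, so $X=0$. For $k\ge 1$, the inductive hypothesis $\frk k\cap \frk g_{k-1}=0$ together with $[X,\frk g_{-1}]\subset \frk k\cap \frk g_{k-1}$ gives the same conclusion. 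Hence $\frk k=0$, and $K$ is discrete.

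For the final assertion, fix $p\in K$ and consider the continuous curve $\lambda\mapsto \delta_\lambda(p)$ on $(0,\infty)$. By the $\delta_\lambda$-invariance of $K$ established above it takes values in the discrete set $K$, and it equals $p$ at $\lambda=1$; hence it is constant, giving $\delta_\lambda(p)=p$ for all $\lambda>0$. The only delicate point of the argument is the passage from ``$\frk k$ is an ideal of $\frk p$ contained in $\frk q$'' to ``$\frk k$ is graded'': general ideals of a graded Lie algebra need not be graded, so the $\delta_\lambda$-invariance of $K$ is essential, and I expect this to be the main place that requires care.
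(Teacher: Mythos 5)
Your proof is correct, and its overall skeleton matches the paper's: reduce discreteness to the vanishing of $\mathrm{Lie}(K)$, kill that ideal using the non-degeneracy built into the Tanaka prolongation, and deduce $\delta_\lambda p=p$ from discreteness plus continuity of $\lambda\mapsto\delta_\lambda p$ exactly as the paper does. The one genuine difference is how you apply the Tanaka condition. The paper takes an arbitrary $v=v_0+\dots+v_\ell\in\mathrm{Lie}(K)$ and brackets it $\ell+1$ times with elements of $\frk g_{-1}$; since the result stays in the ideal $\mathrm{Lie}(K)\subset\frk q$ while its graded components land in strictly negative degrees, each iterated bracket $[[\dots[v_i,y_1],\dots],y_{\ell+1}]$ vanishes, and the Tanaka condition is then peeled off one bracket at a time. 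You instead first prove that $\mathrm{Lie}(K)$ is a \emph{graded} ideal, using $\delta_\lambda$-invariance of $K$ and the diagonalisability of $\delta_\lambda$ with eigenspaces the $\frk g_k$; this reduces the algebra to a clean one-step induction with single brackets. Your route costs you the extra (correct and correctly flagged) observation that ideals of a graded Lie algebra need not be graded, but it buys a more transparent induction and avoids the slightly delicate bookkeeping of the long iterated brackets in the paper's version; the paper's route needs no dilation argument at the Lie algebra level at all. Both are sound.
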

\begin{proof}
	Clearly $K$ is a normal and closed subgroup of $\bar P$ and it is contained in $\bar Q$.
	Let $v\in {\rm Lie}(K)$, the Lie algebra of $K$. Then for some positive integer $\ell$, we may write $v= v_0+\dots+v_\ell$, with $v_i\in \frk{g}_i$ for every $i=0,\dots,\ell$. Since ${\rm Lie}(K)$ is an ideal in $\frk p$ contained in $\frk q$, it follows in particular that for all $i=0,\dots,\ell$,
	\[
	[[\dots[[v_i,y_1],y_2],\dots],y_{\ell+1}]\in \frk{g}_{i-\ell-1}\cap \frk{q}=\{0\},
	\]
	for every $y_1,\dots,y_{\ell+1}\in \frk{g}_{-1}$. By definition of Tanaka prolongation, this implies that $v=0$.
	Therefore, the Lie algebra of $K$ is trivial and so $K$ is discrete.
	
	Since $K=\bigcap_{x\in \bar P} x\bar Qx^{-1}$, it is clear that $\delta_\lambda(K)\subset K$ for all $\lambda>0$.
	However, since $\lambda\mapsto \delta_\lambda p$ is a continuous curve passing through $p$, we must have $\delta_\lambda p=p$ when $p\in K$.
\end{proof}
From Lemma~\ref{lem04061839} it follows that $P:=\bar P/K$ and $Q:=\bar Q/K$ are Lie groups, that $Q$ is closed in $P$ and $M=P/Q$.
Moreover, the maps $\delta_\lambda$ are automorphisms of $P$ as well, for all $\lambda>0$.
Since $G\cap K=\{e\}$, the group $G$ is  embedded in $P$ with $G\cap Q=\{e\}$.

\begin{remark}\label{rem05101810}
	If we are given $G$ and $Q$ inside $P$, for instance as matrix groups, we may want to visualise the action of $P$ on $M$ as a local action of $P$ on $G$.
	In other words, if $p\in P$, then there may be open subsets $U_p,V_p\subset G$ and a contactomorphism $f_p:U_p\to V_p$ that corresponds to the action of $p$ on $M$, i.e., $f_p(g_1)$ is the only $g_2\in G$, if it exists, such that $\{g_1Q)\cap G = \{g_2\}$.
	In general, such construction is not possible for all $p\in P$, but if $p$ is near enough to $e_P$, then $U_p$, $V_p$ and $f_p$ do exist.
	The fact that such $f_p$ is a contactomorphism will be proved in Proposition~\ref{prop03271653}.
\end{remark}

\subsection{Polarizations on $G$, $P$ and $M$}
We denote by $\pi:P\to M$ the quotient map, with $M=P/Q$.
If $p\in P$ and $m\in M$, we use the notation $p.m$ or $p(m)$ for the action of $p$ on $m$.
In such contexts, we will identify elements $p\in P$ with smooth diffeomorphisms $p:M\to M$.

Recall that on $G$ we have the polarisation $\Delta_G$ with $(\Delta_G)_{e}=\frk g_{-1}$.
We define on $P$ the polarisation $\Delta_P$ such that $(\Delta_P)_{e_P}=\frk g_{-1} \oplus \frk q$.
Notice that $\Delta_G=\Delta_P\cap TG$.
Define $\Delta_M:=d\pi (\Delta_P)$ which is a subset of $TM$.
We shall prove that $\Delta_M$ is a $P$-invariant polarisation on $M$.

\begin{proposition}\label{prop03271653}
	The set $\Delta_M\subset TM$ is a $P$-invariant, bracket generating subbundle of $M$.
	In particular, $(M,\Delta_M)$ is a polarised manifold and the diffeomorphisms $p:M\to M$ for $p\in P$ are contactomorphisms. 
\end{proposition}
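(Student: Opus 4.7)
My plan is to exploit the intertwining $\pi \circ L_p = p \circ \pi$ on $P$, together with the left-invariance of $\Delta_P$, in order to reduce every claim to a Lie algebra computation at the identity. Differentiating this intertwining and using $(\Delta_P)_{p} = dL_p(\frk{g}_{-1} + \frk{q})$, I obtain
\[
d\pi\bigl((\Delta_P)_p\bigr) \;=\; dp\bigl(d\pi_{e_P}(\frk{g}_{-1} + \frk{q})\bigr) \;=\; dp\bigl(d\pi_{e_P}(\frk{g}_{-1})\bigr),
\]
where the second equality uses $\frk{q} = \ker d\pi_{e_P}$. This formula drives the whole argument.

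The first substantive step is well-definedness of $\Delta_M$ as a smooth subbundle of rank $\dim \frk{g}_{-1}$. Independence of the representative $p \in \pi^{-1}(m)$ amounts, via the displayed identity applied to $p$ and to $pq$ with $q \in Q$, to the claim that $dq|_{eQ}$ preserves $d\pi_{e_P}(\frk{g}_{-1})$ inside $T_{eQ}M$. Under the natural identification $T_{eQ}M \cong \frk{p}/\frk{q}$ the map $dq|_{eQ}$ becomes $\Ad(q)$ acting on this quotient, so what I need is $\Ad(Q)$-invariance of $\frk{g}_{-1} + \frk{q}$. At the Lie algebra level the grading yields $[\frk{g}_0,\frk{g}_{-1}] \subset \frk{g}_{-1}$ and $[\frk{g}_i,\frk{g}_{-1}] \subset \frk{g}_{i-1} \subset \frk{q}$ for $i \geq 1$, hence $[\frk{q},\frk{g}_{-1}] \subset \frk{g}_{-1} + \frk{q}$; connectedness of $Q$, inherited from that of $\bar Q$ since $K$ is discrete, upgrades this to $\Ad(Q)$-invariance. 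The rank is $\dim \frk{g}_{-1}$ because $\frk{g}_{-1} \cap \frk{q} = \{0\}$, and smoothness is immediate from any local smooth section of $\pi$, which together with the displayed formula provides a smooth local frame for $\Delta_M$.

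$P$-invariance is then immediate: by iterating the displayed identity, $dp_1((\Delta_M)_{\pi(p)}) = d(p_1 p)(d\pi_{e_P}(\frk{g}_{-1})) = (\Delta_M)_{p_1.\pi(p)}$. For bracket generation I would pull the property back from $G$. The composition $\iota : G \to M$, $g \mapsto g.eQ$, has differential at $e$ equal to the quotient map $\frk{g} \to \frk{p}/\frk{q}$, which is a linear isomorphism because $\frk{g} \oplus \frk{q} = \frk{p}$; hence $\iota$ is a local diffeomorphism at $e$, and the same chain-rule computation shows it sends $\Delta_G$ onto $\Delta_M$. Since $\Delta_G$ bracket-generates $TG$, its diffeomorphic image bracket-generates $TM$ on an open neighborhood of $eQ$; by $P$-invariance and transitivity of the $P$-action on $M$, bracket generation propagates to every point.

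The closing clauses of the proposition are then formal: a bracket-generating smooth subbundle is by definition a polarisation, and $P$-invariance is exactly the statement that each diffeomorphism $p : M \to M$ is a contactomorphism. I expect the only substantive technical point to be the well-definedness check, since it requires carefully unpacking the identification $T_{eQ}M \cong \frk{p}/\frk{q}$ and recognizing $dq|_{eQ}$ as $\Ad(q)$ on this quotient; after that, everything reduces to the intertwining identity together with the local embedding of $G$ into $M$.
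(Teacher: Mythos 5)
Your proof is correct, and the first half (well-definedness of $\Delta_M$ as a subbundle, and $P$-invariance) follows essentially the same route as the paper: reduce via $\pi\circ L_p = p\circ\pi$ to the statement that $\Ad(Q)$ preserves $\frk g_{-1}\oplus\frk q$ modulo $\frk q$, and verify this from the grading, $[\frk g_0,\frk g_{-1}]\subset\frk g_{-1}$ and $[\frk g_i,\frk g_{-1}]\subset\frk q$ for $i\ge 1$. The paper phrases the last step by writing $q$ as a product of exponentials and computing $e^{\ad(y)}w \equiv e^{\ad(y_0)}w_{-1} \bmod \frk q$ explicitly; you instead observe that $\frk g_{-1}+\frk q$ is an $\ad(\frk q)$-submodule and use connectedness of $Q$. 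These are interchangeable.

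Where you genuinely diverge is bracket generation. The paper invokes the equivalence, valid for analytic distributions on analytic manifolds, between bracket generation and connectivity by tangent curves, and then connects any two points of $M$ by projecting a $\Delta_P$-horizontal path in $P$. You instead observe that $g\mapsto \pi(g)$ is a local diffeomorphism at $e_G$ (since $d\pi_{e_P}|_{\frk g}:\frk g\to\frk p/\frk q$ is an isomorphism) carrying $\Delta_G$ onto $\Delta_M$ near the base point, so that bracket generation — a local, diffeomorphism-invariant property — transfers from $G$ to a neighborhood of $eQ$, and then propagates to all of $M$ by $P$-invariance and transitivity of the action. Your argument is more elementary in that it avoids the Nagano--Sussmann/Chow machinery, and it is not circular even though it anticipates part of Proposition~\ref{prop03271803}: the only input you need (the dimension count $\dim(\Delta_M)_x=\dim\frk g_{-1}$ and injectivity of $d\pi$ on $\frk g$) is already available from the subbundle step. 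The paper's path-lifting argument, on the other hand, gives slightly more for free, namely horizontal connectivity of $M$ itself. Either route is acceptable here.
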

\begin{proof}
	Notice that $\Delta_M$ is a $P$-invariant subset of $TM$.
	In order to show that $\Delta_M$ is a subbundle, we need to prove that, if $p_1,p_2\in P$ are such that $\pi(p_1)=\pi(p_2)$, then 
	\begin{equation}\label{eq03252104}
	d\pi((\Delta_P)_{p_1}) = d\pi((\Delta_P)_{p_2}) .
	\end{equation}
	Since $p\circ\pi=\pi\circ L_p$ for all $p\in P$, then~\eqref{eq03252104} is equivalent  
	to $d(p_2^{-1}\circ\pi\circ L_{p_1})[(\Delta_P)_{e}] = d\pi[(\Delta_P)_{e}]$.
	Let $p=p_1$ and $q\in Q$ such that $p_2=p_1q$.
	Then $p_2^{-1}\circ\pi\circ L_{p_1} = \pi\circ L_{q^{-1}}$ and thus~\eqref{eq03252104} is also equivalent to
	\begin{equation}\label{eq01051217}
	\Ad_q[(\Delta_P)_{e}] \mod\frk q = (\Delta_P)_{e} \mod\frk q .
	\end{equation}
	Since $\Ad$ is a homomorphism and every $q\in Q$ is the finite product of exponential elements, it's enough that we show \eqref{eq01051217} for $q=\exp y$, $y\in \frk q$. Denote by $y_0$ the projection of $y$ on $\frk g_0$.
	Let $w\in \frk g_{-1} \oplus \frk q$ and denote by $w_{-1}$ its projection on $\frk g_{-1}$.
	Then 
	\begin{align*}
	\Ad_qw \mod\frk q
	&= e^{\ad(y)} w \mod\frk q \\
	&= e^{\ad(y_0)} w_{-1} \mod\frk q .
	\end{align*}
	Since $e^{\ad(y_0)} :\frk g_{-1}\to \frk g_{-1}$ is a bijection, 
we conclude that $\Ad_q[(\Delta_P)_{e}] \mod\frk q= \frk g_{-1} \mod \frk q$.
	This proves \eqref{eq01051217} and therefore~\eqref{eq03252104}.
	
	Finally, we need to show that $\Delta_M$ is bracket generating.
	Remind that, for an analytic subbundle of an analytic manifold, being bracket generating is equivalent to being connected by curves tangent to the subbundle, and that quotients of Lie groups and invariant subbundles are all analytic.
	Thus, let $m_0=\pi(p_0)$ and $m_1=\pi(p_1)$ in $M$.
	Then there is a $C^1$-curve $\gamma:[0,1]\to P$ such that $\gamma(0)=p_0$, $\gamma(1)=p_1$ and $\gamma'(t)\in \Delta_P$ for all $t\in[0,1]$.
	Hence, the curve $\pi\circ\gamma:[0,1]\to M$ goes from $m_0$ to $m_1$ and is clearly tangent to $\Delta_M$.
\end{proof}

\begin{proposition}\label{prop03271803}
	The restriction $\pi|_G:(G,\Delta_G)\to (M,\Delta_M)$ is a contactomorphism onto its image, which is an open subset of $M$.
\end{proposition}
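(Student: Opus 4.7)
The plan is to establish three properties of $\pi|_G$: it is injective, it is a local diffeomorphism (so that its image is open), and it carries $\Delta_G$ to $\Delta_M$ restricted to $\pi(G)$.

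Injectivity follows immediately from the relation $G \cap Q = \{e_P\}$ recorded just before the statement: if $g_1, g_2 \in G$ satisfy $\pi(g_1) = \pi(g_2)$, then $g_1^{-1} g_2 \in Q \cap G$, hence $g_1 = g_2$. For the local diffeomorphism property, I would first differentiate $\pi$ at the identity: the kernel of $d\pi_{e_P} \colon \frk p \to T_{e_M} M$ is $\frk q$, and since $\frk p = \frk g \oplus \frk q$ the restriction $d\pi_{e_P}|_{\frk g}$ is injective; the dimension count $\dim M = \dim \frk p - \dim \frk q = \dim \frk g = \dim G$ then forces it to be a linear isomorphism. To propagate this to an arbitrary $g \in G$, I would use the equivariance identity $\pi \circ L_g^P = L_g^M \circ \pi$, where $L_g^P$ is left translation in $P$ and $L_g^M$ denotes the action of $g \in G \subset P$ on $M$. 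Differentiating this identity at $e_P$ exhibits $d(\pi|_G)_g$ as the composition of three isomorphisms, hence it is itself an isomorphism. Combined with injectivity, this makes $\pi|_G$ an open embedding.

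It remains to check that this open embedding preserves the polarisations. At the identity, since $\frk q \subset \ker d\pi_{e_P}$, one has $(\Delta_M)_{e_M} = d\pi_{e_P}(\frk g_{-1} \oplus \frk q) = d\pi_{e_P}(\frk g_{-1}) = d(\pi|_G)_{e_P}\bigl((\Delta_G)_{e_P}\bigr)$. Applying the equivariance identity once more, together with the $G$-invariance of $\Delta_G$ and the $P$-invariance of $\Delta_M$ proved in Proposition~\ref{prop03271653}, the equality $d(\pi|_G)_g\bigl((\Delta_G)_g\bigr) = (\Delta_M)_{\pi(g)}$ propagates to every $g \in G$, completing the proof.

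No serious obstacle is expected: the argument is essentially a careful assembly of pieces already in place. The one subtlety worth emphasizing is that the equivariance relation requires $G$ to be genuinely embedded in $P$ rather than merely immersed, so that $L_g^M$ is defined at the group level and not only infinitesimally; this is exactly what Lemma~\ref{lem03252021}, combined with the passage from $\bar P$ to $P = \bar P / K$, guarantees.
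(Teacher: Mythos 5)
Your proposal is correct and follows essentially the same route as the paper: injectivity from $G\cap Q=\{e\}$, the immersion property from $\ker d\pi = dL_g(\frk q)$ together with $\frk p=\frk g\oplus\frk q$, openness of the image from the dimension count $\dim G=\dim M$, and preservation of the polarisations from $\Delta_M=d\pi(\Delta_P)$ and the invariance established in Proposition~\ref{prop03271653}. The only cosmetic difference is that you verify everything at $e_P$ and propagate by the equivariance $\pi\circ L_g = g\circ\pi$, whereas the paper argues pointwise at each $g\in G$.
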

\begin{proof}
	First, we show that $\pi|_G$ is injective.
	Let $a,b\in G$ such that $\pi(a)=\pi(b)$.
	Then $\pi(e) = \pi(a^{-1}a) = a^{-1}.\pi(a) = a^{-1}\pi(b) = \pi(a^{-1}b)$, i.e., $a^{-1}b\in Q$.
	Since $a^{-1}b\in G$ and $G\cap Q=\{e\}$, then $a=b$.
	
	Second, we show that $\pi|_G$ is an immersion.
	Since $\ker(d\pi|_g) = dL_g(\frk q)$ and $dL_g(\frk q)\cap T_gG=DL_g(\frk q)\cap DL_g(\frk g)=\{0\}$, then $d(\pi|_G)|_g = (d\pi|_g)|_{T_gG}$ is injective, for all $g\in G$.
	
	Third, we  claim $d\pi|_G(\Delta_G) = \Delta_M\cap T(\pi(G))$.
	Since $\Delta_G\subset \Delta_P$ and since $\Delta_M=d\pi(\Delta_P)$ by definition, it follows that $d\pi|_G(\Delta_G) \subset\Delta_M\cap T(\pi(G))$.
	Moreover, since $\Delta_M$ is $P$-invariant by Proposition~\ref{prop03271653}, for all $x\in M$ , $\dim(\Delta_M)_x=\dim(\Delta_M)_{\pi(e)}=\dim((\frk g_{-1}\oplus\frk q)/\frk q)=\dim(\frk g_{-1})$.
	Therefore, we obtain the claim by comparing the dimensions.
	
	Finally, the facts that $\pi(G)$ is open in $M$ and that $\pi|_G$ is an embedding are consequences of $(\pi|_G)$ being an immersion and the fact that $M$ and $G$ have the same dimension.
\end{proof}

\begin{remark}
	A first consequence of Proposition~\ref{prop03271803} is that any local contactomorphism on $M$ is in fact a local contactomorphism on $G$.
	Indeed, by the action of $P$ on $M$ and via the map $\pi|_G$, any local contactomorphism of $M$ defines a local contactomorphism of $G$.
	Similarly, contact vector fields on $M$ define contact vector fields on $G$.

	In case $G$ is a rigid stratified group and $\frk p$ is the full Tanaka prolongation of $\frk g$, these relations are stronger, see Section~\ref{sec5ea02faf}.
\end{remark}

\section{Contactomorphisms of $M$ when $G$ is rigid}\label{sec5ea02faf}
This section contains Theorem~\ref{thm5e905026} for contact diffeomorphisms in the rigid case.

Relative to a vector $X\in T_eP$, we denote by $\tilde X$ the  left-invariant vector field $\tilde X(p)=dL_p|_e[X]$, and by $X^\dagger$ the  right-invariant vector field $ X^\dagger(p)=dR_p|_e[X]$.
Similarly, we denote by $\tilde{\frk p}$ the Lie algebra of left-invariant vector fields and by $\frk p^\dagger$ the Lie algebra of right-invariant vector fields on $P$.
Moreover, as in the previous sections, the manifold $M$ is the quotient $P/Q$ and we denote by $o$ the point $\pi(e)\in M$.

\begin{lemma}\label{lem07041435}
	Let $\ell:\frk p\to\frk p$ be a Lie algebra automorphism with $\ell(\frk q)=\frk q$ and $\ell(\frk g_{-1}\oplus\frk q) = \frk g_{-1}\oplus\frk q$.
	Then there are a unique contactomorphic Lie group automorphism $L:P\to P$ with $L_*=\ell$ and a unique contactomorphism $L^\pi:M\to M$ with $L^\pi\circ\pi=\pi\circ L$.
\end{lemma}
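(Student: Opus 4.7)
The plan is to first construct the group automorphism $L$ by lifting $\ell$ to the simply connected cover $\bar P$ and then descending to $P=\bar P/K$, and second to induce $L^\pi$ on the quotient $M=P/Q$. The contact property will follow essentially from left-invariance of $\Delta_P$ and the hypothesis $\ell(\frk g_{-1}\oplus\frk q)=\frk g_{-1}\oplus\frk q$, while uniqueness will follow from connectedness of $P$ together with the surjectivity of $\pi$.

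For the construction of $L$, I would start from the fact that $\bar P$ is connected and simply connected, so by the classical Lie correspondence the automorphism $\ell$ of $\frk p$ integrates uniquely to a Lie group automorphism $\bar L:\bar P\to\bar P$ with $(\bar L)_*=\ell$. To push $\bar L$ down to $P$, I would first observe that $\bar Q$ is the connected subgroup of $\bar P$ with Lie algebra $\frk q$, so the hypothesis $\ell(\frk q)=\frk q$ implies $\bar L(\bar Q)=\bar Q$. The kernel $K=\bigcap_{p\in\bar P}p\bar Q p^{-1}$ from Lemma~\ref{lem04061839} is then preserved by $\bar L$: since $\bar L$ is a bijection of $\bar P$, I would compute $\bar L(K)=\bigcap_{p}\bar L(p)\bar L(\bar Q)\bar L(p)^{-1}=\bigcap_{p'}p'\bar Q(p')^{-1}=K$. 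Hence $\bar L$ descends to a Lie group automorphism $L:P\to P$ with $L_*=\ell$. To see that $L$ is a contactomorphism, I would use that $\Delta_P$ is left-invariant with $(\Delta_P)_{e_P}=\frk g_{-1}\oplus\frk q$: for any $p\in P$, differentiating $L\circ L_p=L_{L(p)}\circ L$ at $e$ gives $dL_p((\Delta_P)_p)=dL_{L(p)}|_e(\ell(\frk g_{-1}\oplus\frk q))=dL_{L(p)}|_e(\frk g_{-1}\oplus\frk q)=(\Delta_P)_{L(p)}$.

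For $L^\pi$, since $L(Q)=Q$ the assignment $L^\pi(pQ):=L(p)Q$ is well-defined and smooth, and it satisfies $L^\pi\circ\pi=\pi\circ L$ by construction. Differentiating this identity I get $dL^\pi\circ d\pi=d\pi\circ dL$, so $dL^\pi(\Delta_M)=dL^\pi(d\pi(\Delta_P))=d\pi(dL(\Delta_P))=d\pi(\Delta_P)=\Delta_M$; thus $L^\pi$ is a contactomorphism of $(M,\Delta_M)$. For uniqueness, any group automorphism of the connected group $P$ with prescribed differential agrees with $L$ on a neighborhood of $e_P$ via $\exp_P$ and hence everywhere, giving uniqueness of $L$; and since $\pi$ is surjective, the relation $L^\pi\circ\pi=\pi\circ L$ forces $L^\pi$ to be unique as well.

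The only point I expect to require care is the proof that $\bar L$ preserves the kernel $K$, because $K$ is defined as an intersection over all of $\bar P$ rather than as an invariant of $\frk p$ directly; the argument above reduces this to $\bar L(\bar Q)=\bar Q$, which in turn reduces to the hypothesis $\ell(\frk q)=\frk q$ together with the connectedness of $\bar Q$ and the uniqueness of connected Lie subgroups with prescribed Lie algebra.
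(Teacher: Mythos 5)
Your proof is correct and follows essentially the same route as the paper's: integrate $\ell$ on the simply connected group $\bar P$, check that the kernel $K$ is preserved so that $\bar L$ descends to $P$, pass to the quotient $M$ using $L(Q)=Q$, and derive the contact property from left-invariance of $\Delta_P$ and the hypothesis $\ell(\frk g_{-1}\oplus\frk q)=\frk g_{-1}\oplus\frk q$. You in fact spell out two points the paper leaves implicit (the computation $\bar L(K)=K$ and the uniqueness of $L$ from connectedness of $P$), which is a welcome addition.
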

\begin{proof}
	If $\ell:\frk p\to\frk p$ is a Lie algebra automorphism with $\ell(\frk q)=\frk q$, then the induced Lie group automorphism $\bar L:\bar P\to\bar P$ is such that $\bar L(K)=K$, where $K$ is the kernel of the action of $\bar P$ on $M$.
	It follows that there is a Lie group automorphism $L:P\to P$ such that $L_*=\ell$.
	
	If $L:P\to P$ is a Lie group automorphism with $L(Q)=Q$, then it is well known that there is a unique diffeomorphism $L^\pi:M\to M$ such that $L^\pi\circ\pi=\pi\circ L$.
	
	Now, suppose that $\ell(\frk g_{-1}\oplus\frk q) = \frk g_{-1}\oplus\frk q$, i.e., $L_*(\Delta_P)_e = (\Delta_P)_e$.
	Since $\Delta_P$ is left-invariant, then $L$ is a contactomorphism of $(P,\Delta_P)$.
	Finally, we prove that $L^\pi$ is a contactomorphism.
	Let $X\in\Delta_P$ and $x\in P$.
	Then
	\[
	dL^\pi|_{\pi(x)}[d\pi|_x[\tilde X_x]]
	= d(L^\pi\circ\pi)|_x[\tilde X_x]
	= d(\pi\circ L)|_x[\tilde X_x]
	\in \Delta_M|_{L^\pi(x)} .
	\]
\end{proof}

Let $\Aut(\frk p,\frk g)$ be the group of Lie algebra automorphisms of $\frk p$ that induce contactomorphism on $M$.
By Lemma~\ref{lem07041435}, we have
\[
\Aut(\frk p,\frk g)=\{\phi\in\Aut(\frk p):\phi(\frk q)=\frk q,\ \phi(\frk g_{-1}\oplus\frk q)=\frk g_{-1}\oplus\frk q\} .
\]
This group plays a crucial role in the classification of modifications of a stratified Lie algebra, as we shall show in Theorem~\ref{thm05262101}.

For the following claim, see \cite[Section~6]{MR0266258} and~\cite{MR1274961}.
\begin{theorem}[Tanaka]\label{thm05101222}
	If $\frk g$ is rigid and  $\frk p$ is the full Tanaka prolongation,
	then $\pi_*\frk p^\dagger\subset\Gamma(TM)$ is the set of all germs of contact vector fields on $M$.
	More precisely, on the one hand $\pi_*\frk p^\dagger$ are contact vector fields of $(M,\Delta_M)$; 
	On the other hand, if $U\subset M$ is open and connected, and $V\in\Gamma(TU)$ is a contact vector field, then there is a unique $X\in \frk p$ such that $V=\pi_*X^\dagger|_U$.
\end{theorem}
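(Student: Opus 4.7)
The theorem has two directions. The easy inclusion is that every $X\in\frk p$ yields a contact vector field $\pi_*X^\dagger$: the flow of $X^\dagger$ on $P$ is left translation $p\mapsto\exp(tX)p$, and since $\pi\circ L_{\exp(tX)}=\exp(tX)\cdot\pi$, the induced flow on $M$ is the action of $\exp(tX)\in P$, which preserves $\Delta_M$ by Proposition~\ref{prop03271653}. Hence $\pi_*X^\dagger$ is a contact vector field.

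For the converse inclusion I would first prove uniqueness, which also establishes injectivity of $X\mapsto\pi_*X^\dagger$. Suppose $\pi_*X^\dagger|_U=0$. Since $\ker d\pi|_p=dL_p(\frk q)$ and $X^\dagger(p)=dL_p(\Ad_{p^{-1}}X)$, the hypothesis is equivalent to $\Ad_{p^{-1}}X\in\frk q$ for all $p\in\pi^{-1}(U)$. By analyticity and connectedness of $P$, this extends to all $p\in P$. Differentiating iteratively at $p=e$ yields $\ad(Y_n)\cdots\ad(Y_1)X\in\frk q$ for all $Y_1,\ldots,Y_n\in\frk p$; evaluation at $p=e$ already forces $X\in\frk q$, so write $X=\sum_{k\ge0}W_k$ with $W_k\in\frk g_k$. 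For any $Y_{-1}\in\frk g_{-1}$, the $\frk g_{-1}$-component of $[Y_{-1},X]\in\frk q$ is $[Y_{-1},W_0]$; since $\frk g_{-1}\cap\frk q=0$ this must vanish, and the defining property of the Tanaka prolongation (faithful action of $\frk g_k$ on $\frk g_{-1}$ for $k\ge0$) forces $W_0=0$. Iterating with higher-order brackets against $\frk g_{-1}$ successively eliminates $W_1,W_2,\ldots$, so $X=0$.

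For existence I would invoke Tanaka's classical theorem (see Section~6 of~\cite{MR0266258} and~\cite{MR1274961}): on a filtered manifold whose symbol algebra at every point is a fixed stratified Lie algebra $\frk g$, every germ of infinitesimal automorphism is determined by a jet of finite order, and in the rigid case the germ space at each point has dimension exactly $\dim\mathrm{Prol}(\frk g,\mathrm{Der}(\frk g))=\dim\frk p$. On our $M$ the polarisation $\Delta_M$ is $P$-invariant and $P$ acts transitively, so the symbol at every point is canonically identified with $\frk g$ via the $P$-action, and by hypothesis $\frk p$ is the full prolongation. Thus the space of germs of contact vector fields at any $x_0\in U$ has dimension exactly $\dim\frk p$.

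Putting everything together, the injective linear map $X\mapsto\pi_*X^\dagger$ from $\frk p$ into this germ space at $x_0$ is a bijection by dimension count, so the germ of any $V\in\Gamma(TU)$ at $x_0$ equals $\pi_*X^\dagger$ for a unique $X\in\frk p$. The assignment $x_0\mapsto X$ is locally constant by uniqueness and hence constant on the connected set $U$, yielding the global identity $V=\pi_*X^\dagger|_U$. The main obstacle in this plan is the appeal to Tanaka's theorem: one has to match our setup — a filtered structure on a homogeneous space $P/Q$ with constant symbol $\frk g$ and symmetry algebra $\mathrm{Der}(\frk g)$ — to the precise hypotheses of the classical statement, so that the controlling prolongation produced by the theorem is literally our $\frk p$.
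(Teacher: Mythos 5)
The paper offers no proof of Theorem~\ref{thm05101222} at all: it is stated as a known result and delegated wholesale to \cite[Section~6]{MR0266258} and \cite{MR1274961}. Your proposal is therefore more explicit than the source, and the parts you prove from scratch are correct. The easy direction via the flow $p\mapsto\exp(tX)p$ and Proposition~\ref{prop03271653} is exactly right, and your uniqueness argument is sound: $\pi_*X^\dagger|_U=0$ is equivalent to $\Ad_{p^{-1}}X\in\frk q$ on the open set $\pi^{-1}(U)$, analyticity propagates this to all of $P$, and then the nondegeneracy condition in the definition of the Tanaka prolongation (no nonzero element of $\bigoplus_{k\ge0}\frk g_k$ annihilates $\frk g_{-1}$) kills $X$ layer by layer. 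Two small points deserve care. First, for the existence step you only need Tanaka's \emph{upper bound} $\dim\le\dim{\rm Prol}(\frk g,{\rm Der}(\frk g))$ on the germ space of infinitesimal automorphisms; the exact equality you assert is then a consequence of your injection $\frk p\hookrightarrow\cont(U)$, not an input, and phrasing it that way avoids having to argue separately that $M$ realises the flat model. Second, the hypothesis-matching you flag as ``the main obstacle'' is genuinely the crux: one must check that $(M,\Delta_M)$ is a regular filtered manifold of constant symbol $\frk g$ (which follows from $P$-invariance and transitivity of the $P$-action, as you say) so that the prolongation controlling the automorphisms is literally $\frk p$. Since the paper itself silently assumes this identification when it cites Tanaka, your proposal does not open a gap that the paper closes; it simply makes visible where the classical theory is being used.
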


Denote by $\cont(U)$ the space of contact vector fields on an open set $U\subset M$.
Notice that $\cont(U)$ is a Lie algebra.
Theorem~\ref{thm05101222} can be restated as:
{\it if $U\subset M$ is open and connected, then $\pi_*|U:X^\dagger\mapsto \pi_*X^\dagger|_{U}$ is a Lie algebra isomorphism between $\frk p^\dagger$ and $\cont(U)$.}
With Theorem~\ref{thm05101222}, we can prove the following result:

\begin{theorem}\label{thm5e905026}
	Suppose $\frk g$ is rigid and let $\frk p$ be its full Tanaka prolongation.
	Let $U\subset M$ open and path-connected and $f:U\to f(U)\subset M$ a smooth map with $df(\Delta_M)\subset \Delta_M$ and suppose that there exists $x_0\in U$ such that $df(x_0)$ is non-singular.
	Then there exist unique a Lie group automorphism $L^f:P\to P$ such that $(L^f)_*\in\Aut(\frk p,\frk g)$ and $p^f\in P$ such that, for every $p\in P$ with $\pi(p)\in U$, we have
	\begin{equation}\label{eq5e904c47}
	f(\pi(p)) = \pi(p^fL^f(p)) .
	\end{equation}
	In particular, there exists a unique contact diffeomorphism $g:M\to M$ such that $g|_U=f$.
\end{theorem}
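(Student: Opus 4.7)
My plan is to use the hypothesis that $df(x_0)$ is nonsingular together with Tanaka's theorem (Theorem~\ref{thm05101222}) to extract a Lie algebra automorphism $\ell\in\Aut(\frk p,\frk g)$ from $f$, lift it to a group automorphism $\hat L$ of $P$ via Lemma~\ref{lem07041435}, show that $\hat L^\pi$ agrees with $f$ up to a $P$-translation near $x_0$, and then propagate this local identity to all of $U$ by analytic continuation.

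First, by continuity of $df$, $f$ is a local contactomorphism on some neighborhood $U_0\subset U$ of $x_0$. Choose $p_0\in\pi^{-1}(x_0)$ and $q_0\in\pi^{-1}(f(x_0))$, and set $\hat f(m):=q_0^{-1}\cdot f(p_0\cdot m)$; by Proposition~\ref{prop03271653}, $\hat f$ is a local contactomorphism near $o:=\pi(e)$ with $\hat f(o)=o$. Pushforward by $\hat f$ sends contact vector fields to contact vector fields, and by Theorem~\ref{thm05101222} both source and target Lie algebras of germs of contact vector fields at $o$ are identified with $\frk p^\dagger$ via $\pi_*$. This yields a Lie algebra automorphism $\ell:\frk p\to\frk p$ satisfying $\hat f_*(\pi_* X^\dagger)=\pi_*(\ell(X))^\dagger$. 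The fundamental vector field $X^M:=\pi_* X^\dagger$ vanishes at $o$ iff $X\in\frk q$, and $X^M(o)\in(\Delta_M)_o$ iff $X\in\frk g_{-1}\oplus\frk q$; since $\hat f(o)=o$ and $d\hat f(o)$ is a linear isomorphism of $(\Delta_M)_o$, $\ell$ preserves both subspaces, so $\ell\in\Aut(\frk p,\frk g)$.

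By Lemma~\ref{lem07041435}, $\ell$ lifts to a Lie group automorphism $\hat L$ of $P$ with induced contactomorphism $\hat L^\pi$ of $M$. A direct flow computation shows $(\hat L^\pi)_* X^M=\ell(X)^M$, so $\psi:=(\hat L^\pi)^{-1}\circ\hat f$ is a local contactomorphism fixing $o$ and every fundamental vector field. Since $X\bmod\frk q\mapsto\exp(X)\cdot o$ is a local diffeomorphism near $0\in\frk p/\frk q$, this forces $\psi=\mathrm{id}$ near $o$, so $\hat f=\hat L^\pi$ near $o$. Undoing the translation yields $f(\pi(p))=\pi(p^f L^f(p))$ on a neighborhood of $p_0$, with $L^f:=\hat L$ and $p^f:=q_0\hat L(p_0)^{-1}$.

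To globalise, define $g:M\to M$ by $g(\pi(p)):=\pi(p^f L^f(p))$, a well-defined global contactomorphism since $L^f(Q)\subset Q$. The set $V:=\{x\in U: f\equiv g\text{ on a neighborhood of }x\}$ is open in $U$ and contains $x_0$. For closedness, if $x_n\in V$ with $x_n\to x\in U$, then $f(x)=g(x)$ and $df(x)=dg(x)$ is nonsingular; repeating the above construction at $x$ produces a global contactomorphism $g_x$ with $f=g_x$ near $x$, and since $g_x=g$ on an open set near some $x_n$ while both maps are real-analytic on $M$ (coming from Lie-theoretic data on $P$), $g_x=g$ everywhere, hence $x\in V$. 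Connectedness then gives $V=U$, establishing \eqref{eq5e904c47}. Uniqueness of $L^f$ follows because it is determined by $\ell$, and uniqueness of $p^f$ follows from $f(x_0)=\pi(p^f L^f(p_0))$ together with the triviality in $P$ of the kernel of the $P$-action on $M$. I expect the main obstacle to be this final extension step, since $f$ is only smooth a priori and $df$ could degenerate outside $U_0$; it is the rigidity provided by Tanaka's theorem that lets local agreement with a global contactomorphism propagate along connected paths in $U$.
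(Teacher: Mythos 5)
Your argument is correct and follows essentially the same route as the paper: Tanaka's theorem produces the Lie algebra automorphism, Lemma~\ref{lem07041435} lifts it to $P$, local agreement with $f$ is obtained by translating to the base point and matching fundamental vector fields, and the identity propagates over the connected set $U$ by an open--closed argument using that $df$ is nonsingular on $\overline{V}\cap U$ by continuity (the paper packages the local step as Lemma~\ref{lem04261852} and uses its uniqueness clause where you invoke real-analyticity, an immaterial difference). The only micro-quibble is that uniqueness of $p^f$ requires the full identity~\eqref{eq5e904c47} on an open set together with triviality of the kernel of the $P$-action, not just the value at $x_0$, since a single point pins down $p^f$ only modulo a point stabilizer.
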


For proving Theorem~\ref{thm5e905026}, we need a preliminary lemma.

\begin{lemma}\label{lem04261852}
	Suppose $\frk g$ is rigid and let $\frk p$ be its full Tanaka prolongation.
	Let $U\subset M$ open and path-connected and $f:U\to f(U)\subset M$ a smooth diffeomorphism with $df(\Delta_M)\subset \Delta_M$.
	Then there exist unique $p_f\in P$ and a Lie group automorphism $L^f:P\to P$ with $(L^f)_*\in\Aut(\frk p,\frk g)$ such that, for every $x\in P$ with $\pi(x)\in U$, we have
	\begin{equation}\label{eq5e9f1f85}
	f(\pi(x)) = \pi(p_fL^f(x)) .
	\end{equation}
	In particular, there exists a unique contact diffeomorphism $g:M\to M$ such that $g|_U=f$.
\end{lemma}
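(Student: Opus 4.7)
The plan is to encode $f$ algebraically via Tanaka's Theorem~\ref{thm05101222}, turning it into a Lie algebra automorphism of $\frk p$, and then reconstruct $f$ globally from this data.

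First I would reduce to the case of a basepoint-fixing map. Pick $m_0\in U$ together with lifts $x_0,x_1\in P$ such that $\pi(x_0)=m_0$ and $\pi(x_1)=f(m_0)$. Since left translations by elements of $P$ are contactomorphisms of $M$ by Proposition~\ref{prop03271653}, the map $\tilde f(m):=x_1^{-1}\cdot f(x_0\cdot m)$ is a contact diffeomorphism between open path-connected neighborhoods of $o:=\pi(e)$ with $\tilde f(o)=o$. Proving the lemma for $\tilde f$ will yield~\eqref{eq5e9f1f85} for $f$ upon setting $L^f:=\tilde L^f$ and $p_f:=x_1\,\tilde L^f(x_0)^{-1}$.

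Next I would attach a Lie algebra automorphism $\phi\in\Aut(\frk p,\frk g)$ to $\tilde f$. For each $X\in\frk p$, the vector field $\pi_*X^\dagger$ is a contact vector field on $M$, and its pushforward $\tilde f_*(\pi_*X^\dagger)$ is a contact vector field on the open path-connected set $\tilde f(\mathrm{dom}\,\tilde f)$. Theorem~\ref{thm05101222} then provides a unique $\phi(X)\in\frk p$ with $\tilde f_*(\pi_*X^\dagger)=\pi_*\phi(X)^\dagger$, and the resulting $\phi$ is a Lie algebra isomorphism of $\frk p$. To check $\phi\in\Aut(\frk p,\frk g)$, I would use that $X\mapsto\pi_*X^\dagger(o)$ coincides with $d\pi|_e$, so that $\frk q$ is its kernel and $\frk g_{-1}\oplus\frk q$ is the preimage of $(\Delta_M)_o$. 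Since $\tilde f(o)=o$ and $d\tilde f|_o$ is a linear automorphism of $T_oM$ preserving $(\Delta_M)_o$, evaluating the intertwining at $o$ forces both $\phi(\frk q)=\frk q$ and $\phi(\frk g_{-1}\oplus\frk q)=\frk g_{-1}\oplus\frk q$. Lemma~\ref{lem07041435} then integrates $\phi$ to a Lie group automorphism $\tilde L^f:P\to P$ and to a global contactomorphism $\tilde L^\pi:M\to M$ with $\tilde L^\pi\circ\pi=\pi\circ\tilde L^f$.

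The last and most delicate step is to verify $\tilde f=\tilde L^\pi$ on the connected domain of $\tilde f$. The identity $\tilde f_*(\pi_*X^\dagger)=\pi_*\phi(X)^\dagger$ integrates locally to
\[
\tilde f(\exp(tX)\cdot m)=\exp(t\phi(X))\cdot\tilde f(m),
\]
and the analogous identity holds for $\tilde L^\pi$. Since both maps fix $o$ and $X\mapsto\exp(X)\cdot m$ is locally surjective from $\frk p$ onto a neighborhood of $m$, these identities force $\tilde f=\tilde L^\pi$ on a neighborhood of $o$. Propagating the same argument from any point of agreement yields openness of the agreement set; continuity yields closedness; connectedness of the domain then gives $\tilde f=\tilde L^\pi$ throughout. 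Uniqueness of $(p_f,L^f)$ follows because $(L^f)_*=\phi$ is canonically determined by $\tilde f$ (hence so is $L^f$, since $P$ is connected), and $p_f$ is then pinned down by the formula at a single base point together with the triviality of the kernel of the $P$-action on $M$. I expect the main obstacle to be this propagation: the local flow identity must close up on the whole connected domain, and this ultimately rests on the rigidity encoded in Tanaka's Theorem~\ref{thm05101222}.
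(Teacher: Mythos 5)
Your proposal is correct and follows essentially the same route as the paper: use Theorem~\ref{thm05101222} to turn $f$ into a Lie algebra automorphism of $\frk p^\dagger$, integrate it to a group automorphism via Lemma~\ref{lem07041435}, reduce to a basepoint-fixing map by composing with $P$-translations, and recover the formula by integrating the intertwined contact vector fields. You are in fact somewhat more explicit than the paper on two points it leaves implicit --- verifying that the induced automorphism preserves $\frk q$ and $\frk g_{-1}\oplus\frk q$ by evaluating at the fixed basepoint, and the open--closed propagation of the flow identity across the connected domain --- but these are refinements of the same argument, not a different one.
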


\begin{proof}
	By Theorem~\ref{thm05101222},
	the map $\ell^f=\pi_*|_{f(U)}^{-1} \circ f_* \circ \pi_*|_{U}:\frk p^\dagger \to \frk p^\dagger$ is a composition of Lie algebra isomorphisms
	\[
	\frk p^\dagger \to \cont(U) \to \cont(f(U)) \to \frk p^\dagger .
	\]
	Let $L^f:P\to P$ be the corresponding Lie group automorphism, whose existence is assured by Lemma~\ref{lem07041435}.
	
	It is clear that, if $f_1,f_2$ are two such contact diffeomorphisms, then $L^{f_2\circ f_1} = L^{f_2}\circ L^{f_1}$, whenever the composition is well defined.
	Moreover, notice that, if $p\in P$, then, seen as a diffeomorphism $p:M\to M$, the argument above shows that $\ell^p=\Id$, and so~\eqref{eq5e9f1f85} holds with $L^p=\Id$ and $p_p=p$.
	
	Back to the general case, let $p_0,p_1\in P$ such that $\pi(p_0)\in U$ and $\pi(p_1)=f(\pi(p_0))$.
	Then $h(m):=p_1^{-1}.f(p_0.m)$ defines a contact diffeomorphism $h:p_0^{-1}.U\to p_1.f(U)$.
	By the previous paragraph, we also have $L^h=L^f$.
	Since $h(\pi(e_P))=\pi(e_P)$, integrating the contact vector fields, we get 
	$h(\pi(x)) = \pi(L^h(x))$ whenever $\pi(x)\in U$.
	We conclude that~\eqref{eq5e9f1f85} holds with $p^f:=p_1L(p_0^{-1})$.
	
	Viceversa, if $L^f$ and $p^f$ satisfy~\eqref{eq5e9f1f85}, then
	the differential $\ell_f$ of $L^f$ at $e_P$ is a Lie algebra automorphism of $\frk p^\dagger$ satisfying $\ell^f=\pi_*|_{f(U)}^{-1} \circ f_* \circ \pi_*|_{U}$.
	Therefore, $L^f$ is unique.
	Moreover, since $f(\pi(x)) = \pi(p_fL^f(x)) = p_f.\pi(L^f(x))$, also $p_f$ is uniquely determined.
	
	Finally, the fact that $x\mapsto p_fL^f(x)$ induces a global diffeomorphism $g:M\to M$, which extends $f$, is a consequence of Lemma~\ref{lem07041435}.
\end{proof}

\begin{proof}[Proof of Theorem~\ref{thm5e905026}]
	Fix $x_0\in U$ and a neighborhood $U'\subset U$ of $x_0$ such that $f$ is a diffeomorphism $U'\to f(U')$.
	By Lemma~\ref{lem04261852}, there is a contact diffeomorphism $g:M\to M$ with $g|_{U'}=f|_{U'}$.
	Let $W\subset U$ be the largest open set where $f$ and $g$ are equal.
	If $x\in \de W\cap U$ then $df(x)=dg(x)$, by continuity of the differential, and thus $df(x)$ is non singular.
	Applying Lemma~\ref{lem04261852} again, there are a neighborhood $U''$ of $x$ and a contact diffeomorphism $g'':M\to M$ such that $g''|_{U''}=f|_{U''}$.
	Notice that $U''\cap W$ is a nonempty open set and that the restrictions of $g''$ and $g$ are contact diffeomorphisms on $U''\cap W$ and $g''|_{U''\cap W}=g_{U''\cap W}$.
	Since Lemma~\ref{lem04261852} claims the uniqueness of smooth contact extensions, we get $g''=g$.
	In particular, we have that $x\in U''\subset W$, in contradiction with $x\in\de W$.

	Therefore, we conclude that $\de W\cap U=\emptyset$ and, since $U$ is connected, $W=U$.
\end{proof}

\begin{remark}
	In Theorem~\ref{thm05101222}, and consequently in Theorem~\ref{thm5e905026}, one can assume $f$ to be only smooth of class $C^2$.
	The upgrade of the regularity works like in~\cite{MR2917692}.
\end{remark}

Finally, we prove that the group of $\Aut(\frk p,\frk g)$ is the adjoint representation of $Q$.

\begin{theorem}\label{thm5eb588e6}
	Suppose that $\frk g$ is rigid and that $\frk p$ is the full Tanaka prolongation.
	The Lie algebra of $\Aut(\frk p,\frk g)$ is $\{\ad_X:X\in\frk q\}$.
	In particular, the connected component of the identity in $\Aut(\frk p,\frk g)$ is $\{\Ad_x:x\in Q\}$, which is isomorphic to $Q$ via the adjoint map $x\mapsto \Ad_x$.\end{theorem}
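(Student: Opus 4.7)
The plan is to compute ${\rm Lie}(\Aut(\frk p,\frk g))$ by a structural analysis using the grading of $\frk p$, identify it with $\ad(\frk q)$, and then exponentiate to pass to the Lie group statement and verify injectivity of $\Ad|_Q$.

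For the Lie algebra identification, an element $D\in{\rm Lie}(\Aut(\frk p,\frk g))$ is a derivation of $\frk p$ preserving both $\frk q$ and $\frk g_{-1}\oplus\frk q$. Decomposing $D=\sum_r D_r$ into components of pure grading degree $r$ (each still a derivation), the condition $D_r(\frk g_{-1})\subset\frk g_{-1+r}\cap(\frk g_{-1}\oplus\frk q)$ forces $D_r|_{\frk g_{-1}}=0$ for $r<0$, and the derivation property, combined with $\frk g_{-1}$ generating $\frk g$, extends this to $D_r|_{\frk g}=0$. Similarly $D(\frk q)\subset\frk q$ gives $D_r|_{\frk g_0}=0$ for $r<0$. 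An induction on $k\geq 1$, applied to the identity $[D_rX,Y]=D_r[X,Y]$ for $X\in\frk g_k$, $Y\in\frk g_{-1}$, together with the defining property of the full Tanaka prolongation---that elements of $\frk g_{\geq 0}$ annihilating $\frk g_{-1}$ are zero---forces $D_r|_{\frk g_k}=0$ for all $r<0$. Hence $D$ has non-negative degree; the reverse inclusion is immediate by degree counting. The universal property of the full Tanaka prolongation further identifies degree-$r$ derivations of $\frk p$ with $\ad(\frk g_r)$ for each $r\geq 0$, giving ${\rm Lie}(\Aut(\frk p,\frk g))=\ad(\frk q)$. The map $\ad:\frk q\to\ad(\frk q)$ is an isomorphism since $Z(\frk p)\cap\frk q=0$: any central element $Z=\sum Z_k$ satisfies $[Z_k,\frk g_{-1}]=0$ for each $k$, so $Z_k=0$ for $k\geq 0$ by the prolongation structure, giving $Z(\frk p)\subset\frk g$.

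For the Lie group statement, $Q$ is connected as a quotient of the connected $\bar Q$ by the discrete $K$, so the identity component of $\Aut(\frk p,\frk g)$, being generated by $\exp(\ad_X)=\Ad_{\exp X}$ for $X\in\frk q$, equals $\{\Ad_q:q\in Q\}$. The map $\Ad:Q\to\{\Ad_q:q\in Q\}$ is surjective by construction, and its kernel consists of $q\in Q$ with $\Ad_q=\Id$, i.e., $q\in Z(P)$. Any such $q$ acts trivially on $M=P/Q$, since $q\cdot pQ=pqQ=pQ$ using centrality and $q\in Q$; by the effectiveness of the $P$-action on $M$, built into the construction $P=\bar P/K$ via Lemma~\ref{lem04061839}, this forces $q=e$. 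The main technical obstacle is the structural analysis in the second paragraph, where the defining properties of the full Tanaka prolongation are essential to force a derivation preserving only the two subspaces $\frk q$ and $\frk g_{-1}\oplus\frk q$ to in fact have non-negative degree with respect to the grading.
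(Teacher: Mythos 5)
Your proof is correct in substance but follows a genuinely different route from the paper's. The paper argues analytically: it integrates the one-parameter group $e^{tD}$ to automorphisms $L_t$ of $P$ and contactomorphisms $L_t^\pi$ of $M$ via Lemma~\ref{lem07041435}, invokes Tanaka's theorem (Theorem~\ref{thm05101222}) to realise the generator of $L_t^\pi$ as $\pi_*X^\dagger$ with $X\in\frk q$ (because the flow fixes $o$), and then uses the uniqueness of lifts from Lemma~\ref{lem04261852} to conclude $L_t=C_{\exp(tX)}$, hence $D=\ad_X$; injectivity of $\Ad|_Q$ is proved by a graded argument on the lowest nonzero component of $\log x$. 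You instead work entirely at the Lie-algebra level: graded decomposition of the derivation, elimination of the negative-degree components from the two preserved subspaces together with the non-degeneracy of the prolongation, and identification of the non-negative components with inner derivations via maximality of the full prolongation; for injectivity you use centrality plus effectiveness of the $P$-action on $M$ (Lemma~\ref{lem04061839}), which is arguably cleaner than the paper's grading argument. The paper's route is shorter because Theorem~\ref{thm05101222} and the lifting machinery of Section~3 are already available; yours is self-contained algebraically and makes explicit exactly which properties of the \emph{full} prolongation (transitivity and maximality) are doing the work. Two small points in your argument need filling. First, in the induction killing $D_r|_{\frk g_k}$ for $r<0$ and $k\ge1$: when $k+r<0$ the target $\frk g_{k+r}$ sits in negative degree, so the non-degeneracy statement you quote does not apply; you must instead use that $D_r$ preserves the graded subspace $\frk q$, so $D_r(\frk g_k)\subset\frk q\cap\frk g_{k+r}=\{0\}$. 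Second, the ``universal property'' step really has two halves: maximality gives $D_r|_{\frk g}=\ad_Z|_{\frk g}$ for some $Z\in\frk g_r$ (and for $r=0$ this uses that $\frk g_0={\rm Der}(\frk g)$, i.e.\ that the prolongation is full), and then a short transitivity induction on $k\ge0$ is needed to upgrade this to $D_r=\ad_Z$ on all of $\frk p$, since a derivation of $\frk p$ vanishing on $\frk g$ vanishes identically by non-degeneracy. Both are routine, so the argument stands.
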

\begin{proof}
	We need to show that, if $D:\frk p\to\frk p$ is a derivation such that 
	$D(\frk q)\subset\frk q$ 
	and $D(\Delta_P|_e)\subset \Delta_P|_e$,
	then there is $X\in\frk q$ such that $D=\ad_X$.
	
	The one-parameter group of Lie algebra automorphisms $\ell_t:=e^{tD}$ are such that $\ell_t(\frk q)=\frk q$ and $\ell_t(\Delta_P)_e = (\Delta_P)_e$.
	By Lemma~\ref{lem07041435}, they induce a one-parameter group of Lie group automorphism $ L_t$ on $ P$ and a one-parameter group of contactomorphism $L_t^\pi:M\to M$.
	
	Since $L_t^\pi$ is a one-parameter group of contactomorphisms on $M$ and by Theorem~~\ref{thm05101222}, there is $V\in\pi_*\frk p^\dagger$ such that $L_t^\pi$ is its flow.
	Let $X\in\frk p$ be such that $\pi_*(X^\dagger)=V$.
	Since $L_t^\pi(o)=o$ and thus $V(o)=0$, we have $X\in\frk q$.
	
	Notice that $L_t^\pi(m)=\exp(tX).m$ for all $m\in M$.
	Therefore, if $p\in P$ and $m=\pi(p)$, then 
	\begin{align*}
	\pi(L_t(p)) 
	&= L_t^\pi(\pi(p)) 
	= \exp(tX).\pi(p) \\
	&= \pi(\exp(tX)p)
	= \pi(\exp(tX)p\exp(-tX))
	= \pi(C_{\exp(tX)}p) ,
	\end{align*}
	where $C_ap=apa^{-1}$ is the conjugation by $a\in P$.
	Since by Lemma~\ref{lem04261852} the lift of a contactomorphism from $M$ to $P$ is unique, we conclude that $C_{\exp(tX)} = L_t$.
	
	Finally, for all $t\in\R$ we have
	\[
	e^{t\ad_X} = \Ad_{\exp(tX)} = d C_{\exp(tX)}|_e = d L_t|_e = e^{tD}
	\]
	and thus $D=\ad_X$.
	
	For the last part of the statement, we need to show that $x\mapsto\Ad_x$ is injective on $Q$. 
	So, suppose that $x\in Q$ is such that $\Ad_x$ is the identity map on $P$.
	Since $Q=\exp(\frk q)$, there is $v\in\frk q$ such that $x=\exp(v)$ and thus $\Ad_x=e^{\ad_v}$.
	The vector $v$ can be decomposed as $v=\sum_{j\ge k}v_j$ with $v_j\in\frk g_j$ and $v_k\neq0$, where $k\ge1$.
	If we denote by $\pi_{k-1}$ the projection $\frk p\to\frk g_{k-1}$ given by the grading of $\frk p$, then, for every $w\in\frk g_{-1}$ we have
	\[
	\pi_{k-1}(e^{\ad_v}(w)) = [v,w]
	\]
	Since $e^{\ad_v}(w)=w\in\frk g_{-1}$, then $[v,w]=0$.
	We obtain that $\ad_v|_{\frk g_{-1}}=0$ and thus, by definition of Tanaka prolongation, $v=0$.
	We conclude that $x=e$ and thus $\Ad$ is injective.
\end{proof}

\section{Modifications of stratified groups}\label{mod}
A \emph{polarised Lie algebra} is a pair $(\frk s,\frk s_{-1})$ where $\frk s$ is a Lie algebra and $\frk s_{-1}$ is a bracket-generating subspace.
We say that
two polarised Lie algebras $(\frk s,\frk s_{-1})$ and $(\frk s',\frk s_{-1}')$ are \emph{isomorphic} if there is a Lie algebra isomorphism $\phi:\frk s\to\frk s'$ such that $\phi(\frk s_{-1})=\frk s_{-1}'$.
Given a stratified Lie algebra $\frk g$ and a finite dimensional Tanaka prolongation $\frk p={\rm Prol}(\frk g,\frk g_0)$, 
a \emph{modification of $\frk g$ in $\frk p$} is a polarized algebra $(\frk s,\frk s_{-1})$ where $\frk s\subset \frk p$ is a subalgebra such that $\frk p = \frk s\oplus \frk q$ and $\frk s_{-1}=(\frk g_{-1}\oplus\frk q)\cap\frk s$.
In other words, a modification of $\frk g$ in $\frk p$ is a subalgebra $\frk s\subset \frk p$ of the form
\[
\frk s:= \{X+\sigma(X)\,:\, X\in\frk g\},
\]
for some $\sigma:\frk g\to \frk q$ linear, 
endowed with the polarization
\[
\frk s_{-1} := \{X+\sigma(X)\,:\, X\in \frk g_{-1}\} .
\]
Notice that $\frk s_{-1}$ bracket generates $\frk s$.
Indeed, on the one hand, $\frk s$ has the same dimension as $\frk g$.
On the other hand, one can easily check that, for iterated brackets of length $k\ge0$, we have 
\[
\left([\frk s_{-1},\dots[\frk s_{-1},\frk s_{-1}]\dots]\text{ mod} \bigoplus_{j\ge-k}\frk g_j \right)
=\left( [\frk g_{-1},\dots[\frk g_{-1},\frk g_{-1}]\dots]\text{ mod}\bigoplus_{j\ge-k}\frk g_j\right).
\]

If $S$ is the connected Lie subgroup of $P$ with $T_eS=\frk s$, we call the pair $(S,\Delta_S)$ \emph{modification of $G$ in $P$}, where $(\Delta_S)_e=\frk s_{-1}$.
If $(\frk s',\frk s'_{-1})$ is a polarized Lie algebra that is isomorphic to a modification of $\frk g$ in $\frk p$, then we just say that $\frk s$ is \emph{a modification of $\frk g$}.
Similarly, \emph{a modification of $G$} is any polarized group $(S,\Delta_S)$ whose Lie algebra is a modification of $\frk g$.

\begin{lemma}\label{lem04071119}
Let $S$ be a modification of $G$ in $P$.
The restriction $\pi|_S: S\to M$ is a contactomorphism when restricted from a neighbourhood of $e_S$ to one of $\pi(e_S)$.
\end{lemma}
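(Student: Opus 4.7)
My plan is to prove this by first establishing that $\pi|_S$ is a local diffeomorphism at $e_S$ via the inverse function theorem, then checking the polarisations are preserved by a dimension-count argument at $e_S$, and finally propagating the equality of polarisations to a neighbourhood using left-invariance together with the $P$-invariance of $\Delta_M$ proved in Proposition~\ref{prop03271653}.

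For the local diffeomorphism step, I would observe that $T_{e_S}S = \frk s$, that $T_{\pi(e_S)}M \cong \frk p/\frk q$, and that $d\pi|_{e_P}$ is the canonical projection $\frk p \to \frk p/\frk q$. Since by definition of modification we have $\frk p = \frk s \oplus \frk q$, the restriction $d\pi|_{e_P}:\frk s \to \frk p/\frk q$ is a linear isomorphism. The inverse function theorem then gives a neighbourhood $U$ of $e_S$ in $S$ such that $\pi|_U : U \to \pi(U)$ is a diffeomorphism onto an open set in $M$.

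For the polarisation step at $e_S$, by definition $(\Delta_S)_{e_S} = \frk s_{-1} = (\frk g_{-1} \oplus \frk q) \cap \frk s$, while $(\Delta_M)_{\pi(e_S)} = d\pi((\Delta_P)_{e_P}) = (\frk g_{-1} \oplus \frk q)/\frk q$. Since $\frk s_{-1} \subset \frk g_{-1} \oplus \frk q$, we certainly have $d\pi(\frk s_{-1}) \subset (\Delta_M)_{\pi(e_S)}$. For the reverse inclusion, since $d\pi|_{\frk s}$ is injective, it suffices to compare dimensions: the decomposition $\frk p = \frk s \oplus \frk q$ gives $\frk s + (\frk g_{-1}\oplus\frk q) = \frk p$ (because the right-hand side already contains $\frk q$), and Grassmann's formula yields $\dim \frk s_{-1} = \dim \frk g_{-1}$, which equals $\dim (\Delta_M)_{\pi(e_S)}$. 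Therefore $d\pi(\frk s_{-1}) = (\Delta_M)_{\pi(e_S)}$.

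For the propagation step, fix $s$ in a small enough neighbourhood $U$ of $e_S$. The intertwining relation $\pi \circ L_s = L_s^M \circ \pi$, where $L_s^M$ denotes the action of $s \in S \subset P$ on $M$, yields
\[
d\pi|_s \circ dL_s|_{e_S} = dL_s^M|_{\pi(e_S)} \circ d\pi|_{e_S} .
\]
Applying both sides to $\frk s_{-1}$ and using left-invariance of $\Delta_S$ together with the $P$-invariance of $\Delta_M$ from Proposition~\ref{prop03271653}, we get $d\pi|_s((\Delta_S)_s) = (\Delta_M)_{\pi(s)}$. Combined with the local diffeomorphism property, this shows $\pi|_U$ is a contactomorphism onto $\pi(U)$. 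No step here is a serious obstacle; the only subtlety worth flagging is ensuring that the action $L_s^M$ is well-defined and smooth for $s \in S$, which follows from $S \subset P$ and the discussion preceding Remark~\ref{rem05101810}.
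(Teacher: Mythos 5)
Your proposal is correct and follows essentially the same route as the paper: injectivity of $d\pi|_{\frk s}$ from $\frk s\cap\frk q=\{0\}$ plus a dimension count to get the local diffeomorphism, and a dimension comparison to upgrade $d\pi(\Delta_S)\subseteq\Delta_M$ to equality. The only cosmetic difference is that the paper verifies the inclusion $d\pi(\Delta_S)\subseteq\Delta_M$ directly at every point via $\Delta_S=\Delta_P\cap TS$, whereas you check it at $e_S$ and propagate by $P$-equivariance; both rest on the same Proposition~\ref{prop03271653}.
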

\begin{proof}
We denote by $o$ the base point $\pi(e_S)$ in $M$.
Observe that $d(\pi|_S)_{e_S}: T_{e_S}S=\frk s \to T_o M$ is the restriction to $\frk s$ of $d\pi_{e_P}:\frk p\to T_b M$. Since the kernel of $d\pi_{e_P}$ is $\frk q$, and since $\frk q\cap \frk s=\{0\}$, $d(\pi|_S)_{e_S}$ is injective. Moreover, $\dim \frk s=\dim \frk g= \dim M$, so that $d(\pi|_S)_{e_S}$ is a linear isomorphism. In particular, $\pi|_S$ is a diffeomorphism between two open neighbourhoods of $e_S$ and $o$, respectively. Finally, on the one hand 
\[
d(\pi|_S) (\Delta_S)
= d\pi (\Delta_P\cap TS) 
\subseteq \Delta_M , 
\]
while on the other hand $\dim(\Delta_S)_s=\dim(\frk g_{-1}) = \dim (\Delta_M)_{\pi(s)}$ for all $s\in S$.
So, at all points $s$ where $d(\pi|_S)_s$ is injective we have $d(\pi|_S)_s(\Delta_S)_s=(\Delta_M)_{\pi(s)}$.
\end{proof}

By Lemma~\ref{lem04071119}, both maps
\[
\psi^G_S=\pi|_S^{-1}\circ\pi|_G:U_G\to U_S 
\qquad
\psi^S_G=\pi|_G^{-1}\circ\pi|_S:U_S\to U_G
\]
are contactomorphism between a neighborhood $U_G$ of $e_G$ in $G$ and a neighborhood $U_S$ of $e_S$ in $S$.
They are one the inverse of the other. 
One can also easily prove that the differential $d\psi^G_S|_{e_G}:\frk g\to\frk s$ is the map $X\mapsto X+\sigma (X)$.

Using the maps $\psi^G_S$ and $\psi^S_G$, the following theorem is a direct consequence of Lemma~\ref{lem04071119}.

\begin{theorem}\label{thm:mainone}
	Modifications of a stratified Lie group $G$ are locally contactomorphic to $G$.
\end{theorem}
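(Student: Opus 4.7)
The plan is to produce an explicit local contactomorphism between neighborhoods of the identities in $S$ and $G$ by composing the two partial embeddings into the manifold $M = P/Q$ that the paper has already assembled. At this stage the theorem is essentially a corollary of what precedes it, and indeed the paragraph right before the statement already sketches the map.

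First, I would observe that both $G$ and $S$ sit inside $P$ as Lie subgroups, so their identity elements coincide with $e_P$, and hence $\pi(e_G) = \pi(e_S) = o$, where $o := \pi(e_P) \in M$. By Proposition~\ref{prop03271803}, $\pi|_G$ is a contactomorphism from $(G,\Delta_G)$ onto the open subset $\pi(G) \subset (M,\Delta_M)$; in particular it identifies a neighborhood of $o$ in $M$ with a neighborhood of $e_G$ in $G$, as contact manifolds. By Lemma~\ref{lem04071119}, there is an open neighborhood $U_S$ of $e_S$ in $S$ such that $\pi|_{U_S}$ is a contactomorphism onto the open neighborhood $\pi(U_S)$ of $o$ in $M$.

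Next, I would shrink $U_S$, if necessary, so that $\pi(U_S) \subset \pi(G)$---possible because $\pi(G)$ is an open neighborhood of $o$ and $\pi|_S$ is continuous at $e_S$---and define $U_G := \pi|_G^{-1}(\pi(U_S))$, which is an open neighborhood of $e_G$ in $G$. The composition $\psi_S^G := \pi|_G^{-1}\circ\pi|_{U_S} : U_S \to U_G$ is then a contactomorphism, being a composition of contactomorphisms between the relevant open sets, and its inverse $\psi_G^S = \pi|_S^{-1}\circ\pi|_{U_G}$ is obtained symmetrically. This is exactly the map advertised in the discussion just before the theorem.

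There is no substantive obstacle here: Proposition~\ref{prop03271803} and Lemma~\ref{lem04071119} have already done all of the conceptual work, and what remains is the bookkeeping observation that $o$ serves as a common base point for the two partial embeddings, so that they are composable on a neighborhood of $o$. The only technical point to watch is the shrinking step ensuring $\pi(U_S)\subset\pi(G)$, which is automatic from the openness of $\pi(G)$ in $M$.
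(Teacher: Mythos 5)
Your proposal is correct and follows essentially the same route as the paper: the paper likewise defines $\psi^S_G=\pi|_G^{-1}\circ\pi|_S$ and $\psi^G_S=\pi|_S^{-1}\circ\pi|_G$ on suitable neighborhoods of the identities and deduces the theorem directly from Lemma~\ref{lem04071119} together with Proposition~\ref{prop03271803}. The only detail you make explicit that the paper leaves implicit is the shrinking of $U_S$ so that $\pi(U_S)\subset\pi(G)$, which is indeed automatic from the openness of $\pi(G)$.
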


\begin{remark}\label{rem05101758}
	If we are given $G$ and $S$ in $P$ (for instance as matrix groups), then for any $s\in S$ the image $\psi^S_G(s)$ is the only element $g$ of $G$, if it exists, such that $(sQ)\cap G = \{g\}$.
	Such an element is unique because $\pi:G\to P/Q$ is injective.
\end{remark}

The following theorem is an an inverse of Theorem~\ref{thm:mainone} in the rigid case.

\begin{theorem}\label{teo03291510}
	Suppose that $G$ is a rigid stratified group and that $(S,\Delta_S)$ is a polarized Lie group that is locally contactomorphic to $G$.
	Then $(S,\Delta_S)$ is a modification of $G$.
\end{theorem}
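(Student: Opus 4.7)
The plan is to take $\frk p$ to be the full Tanaka prolongation of $\frk g$ (finite dimensional by rigidity) and construct an injective Lie algebra homomorphism $\beta:\frk s\to\frk p$ whose image is transversal to $\frk q$ and satisfies $\beta(\frk s_{-1})=\beta(\frk s)\cap(\frk g_{-1}\oplus\frk q)$. This will exhibit $\frk s$ as isomorphic to a modification of $\frk g$ in $\frk p$. The key tool is Theorem~\ref{thm05101222}, which identifies contact vector fields on opens of $M$ with pushforwards of right-invariant vector fields on $P$.

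After pre- and post-composing with left translations in $S$ and $G$ (themselves contactomorphisms), we may assume that we are given a local contactomorphism $\phi:U_S\to V_G$ between connected open neighborhoods of $e_S$ and $e_G$ with $\phi(e_S)=e_G$. Setting $\tilde\phi:=\pi|_G\circ\phi:U_S\to M$ produces, by Proposition~\ref{prop03271803}, a local contactomorphism sending $e_S$ to $o:=\pi(e_P)$. For each $X\in\frk s$ the right-invariant vector field $X^\dagger_S$ on $S$ is a contact vector field, since its flow consists of left translations, which preserve the left-invariant distribution $\Delta_S$. Hence $\tilde\phi_*(X^\dagger_S|_{U_S})$ is a contact vector field on the open connected set $\tilde\phi(U_S)\subset M$, and Theorem~\ref{thm05101222} provides a unique $\beta(X)\in\frk p$ with
\[
\tilde\phi_*(X^\dagger_S|_{U_S})=\pi_*(\beta(X)^\dagger)|_{\tilde\phi(U_S)} .
\]
The resulting linear map $\beta:\frk s\to\frk p$ is a Lie algebra homomorphism: both $X\mapsto X^\dagger_S$ and $Y\mapsto Y^\dagger$ are anti-isomorphisms, while $\tilde\phi_*$ and $\pi_*$ preserve brackets, so the two signs cancel; the uniqueness clause of Theorem~\ref{thm05101222} then forces $\beta([X,Y])=[\beta(X),\beta(Y)]$. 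Injectivity of $\beta$ is immediate because a nonzero right-invariant vector field has no zeros.

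It remains to check the positional properties of $\beta(\frk s)$ in $\frk p$. Evaluating the defining identity at $o$ gives $d\pi|_{e_P}(\beta(X))=d\tilde\phi|_{e_S}(X)$. Since $d\tilde\phi|_{e_S}$ is injective and $\ker d\pi|_{e_P}=\frk q$, we obtain $\beta(\frk s)\cap\frk q=\{0\}$, and a dimension count yields $\beta(\frk s)\oplus\frk q=\frk p$. Writing the unique element of $\beta(\frk s)$ in each coset of $\frk q$ as $X+\sigma(X)$ with $X\in\frk g$ defines the linear map $\sigma:\frk g\to\frk q$. For the polarisation, observe that for $X\in\frk s_{-1}$ the image $d\tilde\phi|_{e_S}(X)$ lies in $\Delta_M|_o$ because $\tilde\phi$ is a contactomorphism; since $d\pi|_{e_P}^{-1}(\Delta_M|_o)=\frk g_{-1}\oplus\frk q$, we conclude $\beta(\frk s_{-1})\subset\frk g_{-1}\oplus\frk q$, and matching dimensions gives $\beta(\frk s_{-1})=\beta(\frk s)\cap(\frk g_{-1}\oplus\frk q)$. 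Hence $\beta$ realises $(\frk s,\frk s_{-1})$ as the modification of $\frk g$ in $\frk p$ determined by $\sigma$.

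The main obstacle is the sign bookkeeping required to ensure that $\beta$ is a genuine homomorphism rather than an anti-homomorphism; this is what forces the use of right-invariant (rather than left-invariant) vector fields on $S$, these being precisely the contact vector fields naturally parametrised by $\frk s$ in view of the left-invariance of $\Delta_S$. Rigidity enters only through the passage from local contact vector fields on $M$ to elements of $\frk p$, and is indispensable for invoking Theorem~\ref{thm05101222}.
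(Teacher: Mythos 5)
Your proposal is correct and follows essentially the same route as the paper: both reduce to a pointed contactomorphism via left translations, push forward the right-invariant (hence contact) vector fields of $S$, invoke the Tanaka identification of germs of contact vector fields with $\frk p$ (Theorem~\ref{thm05101222}, where rigidity enters), and then check transversality to $\frk q$ and the polarisation condition by evaluating at the base point and counting dimensions. The only cosmetic difference is that you route explicitly through $M$ and $\pi_*\frk p^\dagger$, whereas the paper phrases the same identification directly on $G$.
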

\begin{proof}
	Let $\frk p$ be the full Tanaka prolongation of $\frk g$.
	Let $\psi:U_S\to U_G$ be a contactomorphism from an open subset $U_S\subset S$ to $U_G\subset G$.
	Up to composing $\psi$ with left translations on $S$ and on $G$, we may assume $\psi(e_S)=e_G$.
	
	Let $\frk s^\dagger\subset\Gamma(TS)$ be the Lie algebra of right-invariant vector fields on $S$.
	Since $\frk s^\dagger$ is made of contact vector fields on $S$ and since the Tanaka prolongation of $\frk g$ coincides canonically with the Lie algebra of germs of contact vector fields on $G$,  $\psi_*:\Gamma(TU_S)\to \Gamma(TU_G)$ gives an injective Lie algebra morphism $\psi_*:\frk s^\dagger\into\frk p$.
	
	Notice that if $X\in\frk s^\dagger$ is such that $\psi_*X(e_G)=0$, then $X=0$.
	Therefore, $\psi_*(\frk s^\dagger) \cap\frk q=\{0\}$.
	Since $S$ and $G$ have the same dimension, we obtain that 
	\[
	\psi_*(\frk s^\dagger) = \{X+\sigma X:X\in\frk g\}
	\]
	for some linear map $\sigma:\frk g\to\frk q$.
	
	Finally, since $d\psi((\Delta_S)_{e_S})=(\Delta_G)_{e_G}$, we obtain that 
	\[
	\psi_* \{X\in\frk s^\dagger:X(e_G)\in(\Delta_S)_{e_S}\} = \{X+\sigma X:X\in\frk g_{-1}\} .
	\]
	We conclude that $(\psi_*(\frk s^\dagger), d\psi((\Delta_S)_{e_S}) )$ is a modification of $\frk g$ in $\frk p$.
\end{proof}

\begin{remark}
	In the case $G$ is not rigid, i.e., the full Tanaka prolongation of $\frk g$ is infinite dimensional, then the argument in the proof of Theorem~\ref{teo03291510} does not work.
	However, the example of the Heisenberg group, which is not rigid, shows that it may still be possible to obtain as modifications all Lie groups that are locally contactomorphic to $G$.
	See Section~\ref{heisenberg}.
\end{remark}

In the rigid case, isomorphisms of modifications are all elements of $\Aut(\frk p,\frk g)$:

\begin{theorem}\label{thm05262101}
	Suppose $\frk g$ is rigid.
	If $\frk s,\frk s'$ are two modifications of $\frk g$ in $\frk p$, and if there is an isomorphism $\phi:\frk s\to\frk s'$ such that $\phi(\frk s\cap(\frk g_{-1}\oplus\frk q))=\frk s'\cap(\frk g_{-1}\oplus\frk q)$, 
	then there is a unique $\ell\in\Aut(\frk p,\frk g)$ such that $\phi=\ell|_{\frk s}$.
\end{theorem}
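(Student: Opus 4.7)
The plan is to integrate $\phi$ to a local contactomorphism, push it into $M$, extend it globally by rigidity, and then exploit the homomorphism property of the integrated map together with the faithful $P$-action on $M$ to pin $\ell$ down exactly on $\frk s$.

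First, I would integrate $\phi$ to a local Lie group isomorphism $\Phi : U_S \to U_{S'}$; because $\phi$ maps $\frk s_{-1}$ to $\frk s'_{-1}$, $\Phi$ is a contactomorphism. By Lemma~\ref{lem04071119}, the quotient maps $\pi|_S$ and $\pi|_{S'}$ are local contactomorphisms near $e_P$, so
\[
f := \pi|_{S'}\circ \Phi \circ (\pi|_S)^{-1}
\]
is a local contact diffeomorphism of $M$ fixing $o := \pi(e_P)$. Applying Theorem~\ref{thm5e905026}, $f$ extends uniquely to a global contact diffeomorphism $g : M \to M$ of the form $g(\pi(p)) = \pi(p^f L^f(p))$, with $(L^f)_*\in \Aut(\frk p,\frk g)$. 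Since $g(o)=o$, one has $p^f \in Q$, so $\Ad_{p^f}\in \Aut(\frk p,\frk g)$ as well.

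The key step I would then carry out is the following equivariance computation. Because $\Phi$ is a group homomorphism,
\[
g(s.m) = \Phi(s).g(m)
\]
for $s\in U_S$ small and $m\in\pi(U_S)$. Both sides are real-analytic in $m$ and $M$ is connected (it is a quotient of the connected group $P$), so analytic continuation extends this identity to all $m\in M$. Substituting the formula $g(\pi(p)) = \pi(p^f L^f(p))$ and expanding, the relation becomes $L^f(p)^{-1} L^f(s)^{-1} (p^f)^{-1} \Phi(s)\, p^f L^f(p) \in Q$ for all $p\in P$. As $p$ varies, $L^f(p)$ sweeps out all of $P$, so the inner element $L^f(s)^{-1} (p^f)^{-1} \Phi(s) p^f$ lies in $\bigcap_{p'\in P} p' Q (p')^{-1}$. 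By Lemma~\ref{lem04061839} and the construction of $P$ as $\bar P/K$, this intersection is trivial. Hence $L^f(s) = (p^f)^{-1} \Phi(s)\, p^f = C_{(p^f)^{-1}}(\Phi(s))$ for all sufficiently small $s\in S$.

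Differentiating at $e_S$ yields $(L^f)_*|_{\frk s} = \Ad_{(p^f)^{-1}} \circ \phi$, so setting $\ell := \Ad_{p^f} \circ (L^f)_* \in \Aut(\frk p,\frk g)$ gives $\ell|_{\frk s} = \phi$, which is existence. For uniqueness, any $\ell'\in \Aut(\frk p,\frk g)$ extending $\phi$ integrates via Lemma~\ref{lem07041435} to a Lie group automorphism $L'$ of $P$ with $L'|_S = \Phi$ locally (uniqueness of integration of Lie algebra morphisms). Then $(L')^\pi$ is a contact diffeomorphism of $M$ agreeing with $f$ on $\pi(U_S)$, so by Theorem~\ref{thm5e905026} it equals $g$ globally. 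Since $(L', e_P)$ satisfies the formula in Theorem~\ref{thm5e905026} for $g$, uniqueness of the pair $(L^f, p^f)$ forces $\ell' = (L')_* = \ell$.

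The main obstacle I anticipate is the equivariance step: extending the identity $g(s.m) = \Phi(s).g(m)$ from $m\in\pi(U_S)$ to all of $M$, and then correctly unpacking it to land in the intersection that is trivialised by the faithfulness of the $P$-action. Once that is carried out cleanly, the conclusion reduces to a one-line differentiation and the compatibility $\Ad_{p^f}\circ \Ad_{(p^f)^{-1}} = \Id$.
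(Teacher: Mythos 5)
Your proposal is correct and follows the same overall route as the paper: integrate $\phi$ to a local isomorphism $\Phi$, transfer it to a local contactomorphism $f=\pi|_{S'}\circ\Phi\circ(\pi|_S)^{-1}$ of $M$ via Lemma~\ref{lem04071119}, and extend globally by the rigidity Theorem~\ref{thm5e905026}. Where you diverge is in the final identification of $\ell$ with $\phi$: the paper works infinitesimally, computing $L_*[X]=(\pi_*|_{\frk p^\dagger})^{-1}\circ f_*\circ\pi_*[X^\dagger]\big|_e=\Phi_*[X^\dagger|_S]\big|_e=\phi[X]$ directly from the correspondence between $\frk p^\dagger$ and contact vector fields, whereas you work at the group level, using the equivariance $g(s.m)=\Phi(s).g(m)$, analytic continuation, and the faithfulness of the $P$-action on $M$ to force $L^f(s)=C_{(p^f)^{-1}}(\Phi(s))$. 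Both arguments are sound; the paper's is shorter because, since $f$ fixes $o$, the uniqueness clause of Theorem~\ref{thm5e905026} already forces $p^f=e_P$ (take $p_0=p_1=e_P$ in the construction of Lemma~\ref{lem04261852}), so the conjugation by $p^f$ you carry through is ultimately trivial — your own uniqueness step in fact reveals this. On the other hand, your proof has the merit of addressing the uniqueness of $\ell$ explicitly (via the uniqueness of the pair $(L^f,p^f)$), a point the paper's proof leaves implicit.
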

\begin{proof}
	Let $S,S'<P$ be the subgroups of $P$ whose Lie algebra are $\frk s$ and $\frk s'$ respectively, endowed with the polarizations induced by $P$, e.g., $\Delta_{S} = \Delta_P\cap TS$.
	The map $\phi$ defines a local contactomorphism $\Phi:\Omega\to\Phi(\Omega)$, $\Omega\subset S$ open with $e\in\Omega$.
	We may assume that $\pi|_\Omega:\Omega\to \pi(\Omega)\subset M$ and $\pi|_{\Phi\Omega}:\Phi(\Omega)\to \pi(\Phi\Omega)\subset M$ are contactomorphisms, see Lemma~\ref{lem04071119}.
	Define $U:=\pi(\Omega)$ and $f:=\pi\circ\Phi\circ\pi|_\Omega^{-1}:U\to f(U)=\pi(\Phi\Omega)$.
	The map $f$ is then a contactomorphism.
	By Theorem~\ref{thm5e905026}, there is a Lie group automorphism $L:P\to P$ such that $f(\pi(p))=\pi(L(p))$ for all $p\in\pi^{-1}(U)$.
	Now, we claim that the map $L_*:\frk p\to \frk p$ restricted to $\frk s$ is equal to $\phi$.
	Indeed, if $X\in\frk s$, then 
	\begin{align*}
	L_*[X]
	&= \left. L_*[X^\dagger] \right|_{e} 
		= \left. (\pi_*|_{\frk p^\dagger})^{-1}\circ\pi_*\circ L_*\circ(\pi_*|_{\frk p^\dagger})^{-1}\circ\pi_* [X^\dagger] \right|_{e} \\
	&= \left. (\pi_*|_{\frk p^\dagger})^{-1}\circ f_*\circ\pi_* [X^\dagger] \right|_{e} 
		= \left. (\pi_*|_{\frk p^\dagger})^{-1}\circ (\pi\circ\Phi\circ\pi|_\Omega^{-1})_*\circ\pi_* [X^\dagger] \right|_{e} \\
	&= \left. \Phi_* [X^\dagger|_S] \right|_{e} 
		= \phi[X] .\qedhere
	\end{align*}
\end{proof}

\section{Examples}\label{exsection}

In this section we consider a few applications of our main results. First, we observe that  every  left-invariant three dimensional contact structure in $\R^3$ is a modification of the Heisenberg group, and consequently  is locally contactomorphic to it. Although this is of course a consequence of the more general Darboux Theorem, we believe it is a good example for presenting our technics. Second, we study some modifications of the free nilpotent Lie algebra $\frk f_{24}$. In this case we are able to find a nilpotent modification $(N,\Delta_N)$ of the  stratified group $F_{24}$ corresponding to $\frk f_{24}$ that is globally contactomorphic to $F_{24}$, but not isomorphic. In particular, if we endow  $N$ and $F_{24}$ with left-invariant sub-Riemannian distances, our example shows two nilpotent Lie groups that are bi-Lipschitz on every compact set but not isomorphic.

\subsection{Modifications of the Heisenberg group}\label{heisenberg} 
We study the consequences of the results of the previous section in the case where $\frk g$ is the three-dimensional Heisenberg algebra. 
It is well known that the full Tanaka prolongation of the Heisenberg Lie algebra $\frk g$ is infinite. 
However, there is a number a different choices of subalgebras $\frk g_0\subset {\rm Der}(\frk g)$ that generate finite dimensional prolongations.
We shall show that these finite prolongations are enough to recover all polarized Lie groups that are locally contactomorphic with the Heisenberg group, i.e., all three dimensional Lie groups with a non-trivial polarization:

\begin{theorem}\label{thm5ea01a1d}
	Let $(\frk s,\frk s_{-1})$ be a three dimensional polarised Lie algebra such that $\dim(\frk s_{-1})=2$.
	Then there is a finite-dimensional prolongation $\frk p$ of the Heisenberg Lie algebra $\frk h$ so that $(\frk s,\frk s_{-1})$ is isomorphic to a modification in $\frk p$.
\end{theorem}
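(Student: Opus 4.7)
My strategy has three parts: identify the Heisenberg algebra as the universal symbol, parameterise $(\frk s,\frk s_{-1})$ by a few structure constants via Jacobi, and then fit each case into a modification inside a small Tanaka prolongation of $\frk h$.

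Consider the filtration $\{0\}\subset\frk s_{-1}\subset\frk s$. Since $\dim(\frk s/\frk s_{-1})=1$ and $\frk s_{-1}$ is bracket-generating, the induced bracket on the associated graded algebra is a nonzero alternating form $\frk s_{-1}\wedge\frk s_{-1}\to\frk s/\frk s_{-1}\cong\R$, hence automatically non-degenerate by dimension count; thus the symbol of $\frk s$ is the Heisenberg algebra $\frk h$. Choosing a basis $e_1,e_2$ of $\frk s_{-1}$ and setting $e_3=[e_1,e_2]$, the vectors $\{e_1,e_2,e_3\}$ form a basis of $\frk s$ and the remaining structure is $[e_i,e_3]=a_ie_1+b_ie_2+c_ie_3$ for $i=1,2$. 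A short computation reduces the Jacobi identity to $a_1+b_2=0$ together with the proportionality $c_2(a_1,b_1)=c_1(a_2,b_2)$.

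To realise $\frk s$ as a modification, write $e_i=X_i+\sigma(X_i)$ inside $\frk p={\rm Prol}(\frk h,\frk g_0)$, where $X_1,X_2$ span $\frk h_{-1}$, $X_3$ spans $\frk h_{-2}$, and $\sigma:\frk h\to\frk q$ is a linear map to be determined. Expanding $[e_i,e_j]$ in $\frk p$ and matching components by graded degree translates to an explicit system on the $\frk g_0$-components $A_i\in\frk{gl}(2,\R)\cong{\rm Der}_{\rm grad}(\frk h)$ of $\sigma(X_i)$: the condition $A_1 X_2=A_2 X_1$ kills the spurious $\frk g_{-1}$ contribution to $[e_1,e_2]$, the traces ${\rm tr}(A_i)=c_i$ reproduce the correct $\frk g_{-2}$ part of $[e_i,e_3]$, and the adjoint of $\sigma(X_3)$ on $\frk h_{-1}$ must be the traceless matrix $-\begin{pmatrix}a_1 & a_2 \\ b_1 & b_2\end{pmatrix}$ (traceless precisely by $a_1+b_2=0$). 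The commutator relations coming from the degree-$0$ part of the brackets are forced to be consistent by the Jacobi constraints above, possibly after letting $\sigma$ take values also in $\frk g_1$ to absorb residual terms. One then takes $\frk g_0$ to be the subalgebra of $\frk{gl}(2,\R)$ generated by the resulting matrices $A_1,A_2,\sigma(X_3)$, which is finite since $\frk{gl}(2,\R)$ itself has dimension~$4$.

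The main obstacle is to ensure that the whole prolongation $\frk p={\rm Prol}(\frk h,\frk g_0)$ is finite-dimensional, even when $\frk g_0$ itself is: the unrestricted prolongation of $\frk h$ through all of $\frk{gl}(2,\R)$ is the infinite-dimensional algebra of germs of contact vector fields on $\R^3$, so $\frk g_0$ must be chosen carefully. I plan to handle this by a short case analysis via the Bianchi classification of three-dimensional Lie algebras admitting a bracket-generating hyperplane (Heisenberg, $\frk{sl}(2,\R)$, $\frk{so}(3)$, $\frk e(2)$, $\frk e(1,1)$, and the relevant solvable Bianchi families): in each case the minimal $\frk g_0$ constructed above is a standard subalgebra of $\frk{gl}(2,\R)$---trivial, $\R E$, $\frk{so}(2)$, $\frk{sl}(2,\R)$, or a small solvable piece---whose Tanaka prolongation is classically known to terminate in finitely many steps, yielding the desired finite $\frk p$ containing a modification isomorphic to $(\frk s,\frk s_{-1})$.
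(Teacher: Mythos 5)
Your skeleton (reduce to the Heisenberg symbol, normalise structure constants via Jacobi, then realise each class as the graph of a linear $\sigma:\frk h\to\frk q$ inside a finite prolongation) is the same as the paper's, and your normal form $[e_i,e_3]=a_ie_1+b_ie_2+c_ie_3$ with $a_1+b_2=0$ and $c_2(a_1,b_1)=c_1(a_2,b_2)$ is correct and consistent with the paper's Proposition~\ref{prop04111840}. But the proposal has a genuine gap exactly at the point you flag as ``the main obstacle.'' Finiteness of ${\rm Prol}(\frk h,\frk g_0)$ is the whole content of the theorem --- the full prolongation of $\frk h$ through ${\rm Der}(\frk h)\cong\frk{gl}(2,\R)$ is the infinite-dimensional contact algebra even though $\frk g_0$ is only $4$-dimensional --- and ``classically known to terminate'' is not an argument: for several of the subalgebras of $\frk{gl}(2,\R)$ that your construction would generate (anything containing a rank-one or nilpotent element is a serious candidate for infinite type), finiteness is false or at best delicate. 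The paper sidesteps this entirely by committing \emph{in advance} to two specific prolongations known to be finite, $\frk{sl}(3,\R)={\rm Prol}(\frk h,\{\text{diagonal derivations}\})$ for the solvable cases, $\frk{su}(2)$ and one of the two $\frk{sl}(2,\R)$ polarisations, and $\frk{su}(2,1)$ for the remaining $\frk{sl}(2,\R)$ polarisation.

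The second, related gap is the existence of $\sigma$ itself. Your graded matching is a genuinely coupled system: components of $\sigma$ in degrees $1,2$ feed back into the lower-degree equations through the brackets of $\frk p$, so the claim that Jacobi forces consistency ``possibly after letting $\sigma$ take values also in $\frk g_1$ to absorb residual terms'' is an assertion, not a proof. Moreover there is a chicken-and-egg problem: the positive part $\frk q$ available to $\sigma$ is determined by your choice of $\frk g_0$, so you cannot first solve for $\sigma$ and then generate $\frk g_0$ from its degree-zero components. The paper's case~\ref{caseD} illustrates both difficulties concretely: that polarised $\frk{sl}(2,\R)$ is \emph{not} realised in $\frk{sl}(3,\R)$ at all, and the realisation in $\frk{su}(2,1)$ requires a $\sigma$ with nonzero components in degrees $0$, $1$ and $2$, exhibited by explicit (computer-assisted) computation. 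To close your argument you would need, for each case of your classification, to name a concrete $\frk g_0$ with provably finite prolongation and then verify solvability of the resulting system for $\sigma$ --- at which point you have essentially reconstructed the paper's case-by-case proof.
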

 
Our study is based on a classification of three-dimensional Lie algebras due to several authors.
We summarise the results we need in the following theorem.

\begin{proposition}\label{prop04111840}
	Let  $(\frk s,\frk s_{-1})$ be a three-dimensional polarised Lie algebra such that $\dim(\frk s_{-1})=2$.
	Then there is a basis $(f_1, f_2, f_3)$ of $\frk s$ with $\frk s_{-1}=\Span\{f_1,f_2\}$ such that 
	exactly one of the following cases occurs:
	\begin{enumerate}[label=(\Alph*)]
	\item\label{caseA}
	$[f_1, f_2]=f_3$, 
	$[f_1, f_3]=\alpha f_2 +\beta f_3$ and 
	$[f_2, f_3]=0$,
	for some $\alpha\in \R $ and $\beta\in\{0,1\}$. 
	In this case $\frk s$ is solvable and the non-isomorphic
	cases are exactly the following four:
	\begin{enumerate}[label=(A.\arabic*)]
	\item\label{caseA1}
	$[f_1, f_2]=f_3$, 
	$[f_1, f_3]=0$ and 
	$[f_2, f_3]=0$;
	\item\label{caseA2}
	$[f_1, f_2]=f_3$, 
	$[f_1, f_3]=f_2$ and 
	$[f_2, f_3]=0$;
	\item\label{caseA3}
	$[f_1, f_2]=f_3$, 
	$[f_1, f_3]=-f_2$ and 
	$[f_2, f_3]=0$;
	\item\label{caseA4}
	$[f_1, f_2]=f_3$, 
	$[f_1, f_3]=\alpha f_2 + f_3$ and 
	$[f_2, f_3]=0$.
	\end{enumerate}
	\item\label{caseB}
	$[f_1, f_2]=f_3$,
	$[f_1, f_3]=- f_2$,
	$[f_2, f_3]=f_1$.
	In this case $\frk s= \frk{su}(2)$ is simple.
	\item\label{caseC}
	$[f_1, f_2]=f_3$,
	$[f_1, f_3]=- f_1$,
	$[f_2, f_3]=f_2$.
	In this case $\frk s= \frk{sl}(2,\R)$ is simple.
	\item\label{caseD}
	$[f_1, f_2]=f_3$,
	$[f_1, f_3]=f_2$,
	$[f_2, f_3]=-f_1$.
	In this case $\frk s= \frk{sl}(2,\R)$ is simple.
	\end{enumerate}
\end{proposition}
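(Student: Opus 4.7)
The plan is to combine the classification in Proposition~\ref{prop04111840} with an explicit case-by-case construction. By that proposition, up to isomorphism of polarised algebras there are exactly seven target structures (A.1--A.4, B, C, D), so the theorem reduces to producing, for each of these seven, a subalgebra $\frk g_0 \subset \mathrm{Der}(\frk h)$ whose Tanaka prolongation $\frk p = \mathrm{Prol}(\frk h, \frk g_0)$ is finite-dimensional, together with a linear map $\sigma : \frk h \to \frk q$ such that the graph
\[
\frk s := \{X + \sigma(X) : X \in \frk h\}
\]
is a Lie subalgebra of $\frk p$, polarised by $\{X+\sigma(X):X\in\frk h_{-1}\}$, and isomorphic to the target polarised algebra.

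First I would record the structure of $\mathrm{Der}(\frk h)$: the strata-preserving derivations act on $\frk h_{-1}$ by an arbitrary element of $\mathfrak{gl}(\frk h_{-1})$ and act on $[e_1,e_2]$ by the induced trace, so $\mathrm{Der}(\frk h)\cong\mathfrak{gl}(2,\R)$. Case A.1 is tautological ($\frk s=\frk h$, $\sigma=0$, any $\frk g_0$). For each of the remaining six cases I would choose $\frk g_0$ of minimal size sufficient to contain the needed degrees of freedom: a one- or two-dimensional abelian subalgebra of $\mathrm{Der}(\frk h)$ (generated by the grading derivation together with a diagonal or a rotation-type element) for A.2--A.4, and a suitable reductive subalgebra for B, C, D. Finiteness of $\mathrm{Prol}(\frk h,\frk g_0)$ in each case would be verified directly by computing $\frk g_1$ and checking that it vanishes or truncates after one step, using the Tanaka recursion that identifies $\frk g_{k+1}$ with linear maps $\frk h_{-1}\to\frk g_k$ compatible with $\mathrm{ad}$.

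Next I would solve for $\sigma$. Writing $\sigma(e_i)=\sum_j a_{ij}q_j$ in a chosen basis of $\frk q$, the requirement that $\frk s$ is closed under the bracket of $\frk p$ and that the induced brackets on $\frk s$ match the target structure constants of Proposition~\ref{prop04111840} translates into a finite system of polynomial equations in the $a_{ij}$; I would solve this system case by case. Transversality $\frk p=\frk s\oplus\frk q$ is automatic from the graph form, and the polarisation identity $\frk s_{-1}=\frk s\cap(\frk h_{-1}\oplus\frk q)$ is then immediate from the definition.

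The main obstacle will be the three simple cases B, C, D, where producing the nontrivial bracket $[f_2,f_3]$ in a first-layer direction forces a delicate interaction between the $\mathfrak{gl}$-type part of $\frk g_0$ and the component of $\sigma$ valued in $\frk g_0$ (which must feed back, via the bracket with $e_3$, into $\frk h_{-1}$ in the right proportions). Verifying simultaneously that the chosen $\frk g_0$ yields a finite prolongation and that the polynomial system for $\sigma$ is solvable is where the real work lies; for the solvable A-cases the choice of $\frk g_0$ as the grading plus at most one extra derivation makes both finiteness and the existence of $\sigma$ essentially transparent.
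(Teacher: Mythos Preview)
Your proposal is not a proof of the stated Proposition at all: it \emph{assumes} Proposition~\ref{prop04111840} and then sketches a proof of Theorem~\ref{thm5ea01a1d} (that every three-dimensional polarised Lie algebra is a modification of the Heisenberg algebra in some finite prolongation). The statement you were asked to prove is the classification itself: given an arbitrary three-dimensional polarised Lie algebra $(\frk s,\frk s_{-1})$ with $\dim\frk s_{-1}=2$, exhibit a basis $(f_1,f_2,f_3)$ realising exactly one of the normal forms (A.1)--(A.4), (B), (C), (D), and show these forms are pairwise non-isomorphic as polarised Lie algebras. Your text never addresses this; you invoke ``the classification in Proposition~\ref{prop04111840}'' in the first sentence as an input.

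For comparison, the paper's proof of the Proposition runs as follows: first split into the solvable and simple cases via the Levi decomposition in dimension three. For the simple case it quotes the known classification of left-invariant sub-Riemannian structures on $\frk{su}(2)$ and $\frk{sl}(2,\R)$ to obtain (B), (C), (D), and distinguishes (C) from (D) by the action of $\ad_{f_3}$ on $\frk s_{-1}$ (reflection versus rotation). For the solvable case it applies Lemma~\ref{BC} to get the form $[f_1,f_3]=\alpha f_2+\beta f_3$, then rescales to force $\beta\in\{0,1\}$ and, when $\beta=0$, to force $\alpha\in\{-1,0,1\}$. The four subcases are then separated by invariants of the polarised structure: the dimension of $[\frk s,\frk s]$, whether $[\ell,\frk s_{-1}]\subset\frk s_{-1}$ for $\ell=[\frk s_{-1},\frk s_{-1}]$, the sign of $\ad_{f_1}^2$ on $[\frk s,\frk s]$, and a direct change-of-basis computation showing $\alpha$ is an invariant in case (A.4). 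None of this appears in your proposal.
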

Part of the proof of Proposition~\ref{prop04111840} is based on the following lemma, see \cite{MR3365789}.
\begin{lemma}[Baudoin--Cecil]\label{BC}
Let $S$ be a three-dimensional solvable Lie group endowed with a left-invariant sub-Riemannian structure $(\Delta_S, g)$. 
There exist vectors $e_1, e_2, e_3$ linearly independent in $\frk s$, $\alpha\in \R $ and $\beta\geq 0$ such that
$e_1, e_2$ is an orthonormal basis of $(\Delta_S)_e$ and
\begin{equation}\label{bracketsSolv}
[e_1, e_2]=e_3, \quad
[e_1, e_3]=\alpha e_2 +\beta e_3,\quad
[e_2, e_3]=0.
\end{equation}
\end{lemma}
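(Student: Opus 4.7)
The plan is to work from a canonical adapted basis of $\frk s$ and normalize by rotations and reflections of the orthonormal basis of $(\Delta_S)_e$. First I would pick any orthonormal basis $(u_1,u_2)$ of $(\Delta_S)_e$ and set $u_3:=[u_1,u_2]$. Bracket-generating forbids $\Delta_e$ from being a subalgebra, so $u_3\notin \Delta_e$ and $(u_1,u_2,u_3)$ is a basis of $\frk s$. The goal is then to replace $(u_1,u_2)$ by an orthonormal basis $(e_1,e_2)$ of $(\Delta_S)_e$ and set $e_3:=[e_1,e_2]$ so that the structure constants take the form~\eqref{bracketsSolv} with $\beta\ge 0$. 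The analysis splits on the dimension of the derived subalgebra $\frk n:=[\frk s,\frk s]$: solvability of $\frk s$ forces $\frk n$ to be a proper subspace, so $\dim\frk n\in\{1,2\}$ since $u_3\in\frk n$.

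In the case $\dim\frk n=1$, one has $\frk n=\R u_3$, hence $[u_1,u_3]=\gamma_1 u_3$ and $[u_2,u_3]=\gamma_2 u_3$ for some scalars $\gamma_1,\gamma_2$. An $SO(2)$-rotation of the basis $(u_1,u_2)$ of $\Delta_e$ leaves $u_3$ invariant and transforms the pair $(\gamma_1,\gamma_2)$ by the same rotation; choosing the angle so that the second coordinate vanishes puts $[e_2,e_3]=0$ and $[e_1,e_3]=\gamma' e_3$. A sign flip of $e_1$ (which also reverses $e_3$) reverses the sign of $\gamma'$, so one can arrange $\gamma'\ge 0$, yielding~\eqref{bracketsSolv} with $\alpha=0$ and $\beta=\gamma'$.

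In the case $\dim\frk n=2$, the subalgebra $\frk n$ is a two-dimensional nilpotent Lie algebra, and therefore abelian. A dimension count gives $\dim(\Delta_e\cap\frk n)\ge 1$, and strict inequality must hold: otherwise $\Delta_e=\frk n$ would be abelian, contradicting bracket-generating. Hence $\Delta_e\cap\frk n$ is a line, and I would choose $e_2$ to be a unit generator of it and $e_1$ a unit vector in $\Delta_e$ orthogonal to $e_2$, setting $e_3:=[e_1,e_2]$. Since $e_2,e_3\in\frk n$ and $\frk n$ is abelian, $[e_2,e_3]=0$ automatically; since $[e_1,e_3]\in\frk n=\Span\{e_2,e_3\}$ one writes $[e_1,e_3]=\alpha e_2+\beta e_3$. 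A final sign flip $e_1\mapsto-e_1$ (which induces $e_3\mapsto-e_3$) reverses $\beta$ but leaves $\alpha$ and the relation $[e_2,e_3]=0$ intact, so one secures $\beta\ge 0$.

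The delicate step will be the case $\dim\frk n=2$: one must justify that $\Delta_e\cap\frk n$ has the correct dimension (playing bracket-generating against the abelianness of $\frk n$) and then verify that the single sign flip of $e_1$ available at the end is the right reflection to normalise $\beta\ge 0$ without disturbing the vanishing of $[e_2,e_3]$ or the form of $[e_1,e_3]$. The case $\dim\frk n=1$ is, by contrast, essentially a one-parameter rotation argument once the rotational freedom of the orthonormal basis of $\Delta_e$ is exploited.
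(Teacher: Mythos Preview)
The paper does not give its own proof of this lemma: it is quoted from Baudoin--Cecil \cite{MR3365789} and used as a black box in the proof of Proposition~\ref{prop04111840}. So there is no in-paper argument to compare against.

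Your argument is correct. The only step that deserves one more word of justification is the assertion, in the case $\dim\frk n=2$, that $\frk n$ is nilpotent (hence abelian, being two-dimensional). This is the standard fact that over a field of characteristic zero the derived subalgebra of a solvable Lie algebra is nilpotent; alternatively, in this low-dimensional situation one can check directly via the Jacobi identity that a two-dimensional derived ideal in a three-dimensional Lie algebra is forced to be abelian. Either way the step goes through, and the rest of your normalisation (the $SO(2)$-rotation in the one-dimensional case, the choice of $e_2\in\Delta_e\cap\frk n$ in the two-dimensional case, and the final sign flip of $e_1$ to secure $\beta\ge 0$) is exactly right.
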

\begin{proof}[Proof of Proposition~\ref{prop04111840}]
	If $\frk s$ is a three dimensional Lie algebra, then it is either solvable or simple. Indeed,  the claim follows from the Levi decomposition and the fact that there are no simple Lie groups of dimension $1$ or $2$.
	If $\frk s$ is simple, then the $(\frk s,\frk s_{-1})$ falls into one the cases~\ref{caseB},~\ref{caseC} or~\ref{caseD}, see~\cite{MR2902707}.
	Notice that the cases~\ref{caseC} and~\ref{caseD} are not isomorphic as polarised Lie algebras because $\ad_{f_3}$ is a reflexion of $\frk s_{-1}$ in case~\ref{caseC}, while in case~\ref{caseD} it is a rotation.
	
	If $\frk s$ is solvable, then we apply Lemma~\ref{BC} and obtain case~\ref{caseA}.
	However, since the classification in Lemma~\ref{BC} is up to isometry, we have to further discriminate to obtain non-isomorphic subcases.
	So, if $f_1,f_2,f_3$ is a basis of $\frk s$ with $\frk s_{-1}=\Span\{ f_1,f_2\}$, $[f_1,f_2]=f_3$ and $[f_2,f_3]=0$, then we must have
	\[
	\begin{cases}
	f_1 &= a_1^1 e_1 + a_1^2 e_2 \\
	f_2 &= a_2^2 e_2 \\
	f_3 &= a_1^1a_2^2 e_3 ,
	\end{cases}
	\]
	for some real coefficients.
	The third bracket relation is
	\[
	[f_1,f_3] = \alpha (a_1^1)^2 f_2 + \beta a_1^1 f_3 .
	\]
	Since $\alpha\in\R$ and $\beta\ge0$, in each case we can choose $a_i^j$ in the following way:
	\begin{align*}
	\alpha=\beta=0 
		&& a_1^1=1,\ a_1^2=0,\ a_2^2=1 : 
		&& [f_1,f_3] &= 0 \\
	\beta>0,\ \alpha\in\R 
		&& a_1^1=\frac1\beta,\ a_1^2=0,\ a_2^2=1 :
		&& [f_1,f_3] &= \frac{\alpha}{\beta^2} f_2 + f_3 \\
	\beta=0,\ \alpha>0 
		&& a_1^1=\frac1{\sqrt\alpha},\ a_1^2=0,\ a_2^2=1 :
		&& [f_1,f_3] &=  f_2  \\
	\beta=0,\ \alpha<0 
		&& a_1^1=\frac1{\sqrt{|\alpha|}},\ a_1^2=0,\ a_2^2=1 :
		&& [f_1,f_3] &= - f_2 
	\end{align*}
	
	Now, we want to show that cases \ref{caseA1}, \ref{caseA2}, \ref{caseA3} and \ref{caseA4} are not isomorphic to each other.
	Notice that $\ell:=\Span\{f_3\}=[\frk s_{-1},\frk s_{-1}]$ and $\frk s^{(2)}:=[\frk s,\frk s]$ are invariant under isomorphisms of polarised Lie algebras.
	
	First, case~\ref{caseA1} is not isomorphic to the others because in case~\ref{caseA1} we have $\frk s^{(2)}=\Span\{f_3\}$ while in all other three cases we have $\frk s^{(2)}=\Span\{f_2,f_3\}$.
	
	Second, case~\ref{caseA4} is not isomorphic to the others because in case~\ref{caseA4} we have $[\ell,\frk s_{-1}] \not\subset\frk s_{-1}$ while in all other cases we have $[\ell,\frk s_{-1}] \subset\frk s_{-1}$.
	
	Third, for different choices of $\alpha\in\R$ in case~\ref{caseA4} we get non-isomorphic polarised Lie algebras:  To prove this, we shall show that the parameter $\alpha$ is independent of the choice of the basis.
	So, suppose that $g_1,g_2,g_3\in\frk s$ form another basis with $\frk s_{-1}=\Span\{g_1,g_2\}$, $[g_1,g_2]=g_3$, $[g_2,g_3]=0$ and $[g_1,g_3]=\alpha' g_2+g_3$.
	Then one easily shows that $g_1=xf_1+y f_2$, $g_2=\mu f_2$ and $g_3=\lambda f_3$, for some $x,y,\lambda,\mu\in\R$ with $\frac{x\mu}{\lambda}=1$.
	Moreover, $[g_1,g_3] = \alpha \frac{x\mu}{\lambda} g_2 + x g_3$, which implies $x=1$ and $\alpha=\alpha'$.
	
	Finally, cases~\ref{caseA2} and~\ref{caseA3} are not isomorphic to each other, because in case~\ref{caseA2} it holds $\ad_{f_1}|_{\frk s^{(2)}}^2=\Id|_{\frk s^{(2)}}$, while while in case~\ref{caseA3} it holds $\ad_{f_1}|_{\frk s^{(2)}}^2=-\Id|_{\frk s^{(2)}}$.
\end{proof}

\begin{proof}[Proof of Theorem~\ref{thm5ea01a1d}]
	Let us fix the notation for the Heisenberg Lie algebra. Fix  a basis $e_1,e_2,e_3$ so that $[e_1,e_2]=e_3$, and choose $\frk g_{-1}={\rm span}\{e_1,e_2\}$. 
	The space ${\rm Der}(\frk g)$ of the strata preserving derivations of $\frk g$ may be identified with $\frk{gl}(2,\R)$.  
	
	First, we  consider 
	\[
	\frk g_0:=\{D\in {\rm Der}(\frk g)\,:\, D(e_1)\subseteq \R e_1 \text{ and }D(e_2)\subseteq \R e_2\}.
	\]
	In this case, ${\rm Prol}(\frk g,\frk g_0) = \frk{sl}(3,\R)=\frk g \oplus \frk q$ (see, e.g., \cite{MR1984102}), where $\frk g$ is identified with the Lie algebra generated by
	\begin{equation}\label{eq05101826}
	e_1=\begin{pmatrix}
	0&1&0\\
	0&0&0\\
	0&0&0
	\end{pmatrix}, \quad e_2=\begin{pmatrix}
	0&0&0\\
	0&0&1\\
	0&0&0
	\end{pmatrix},\quad e_3= \begin{pmatrix}
	0&0&1\\
	0&0&0\\
	0&0&0
	\end{pmatrix}.
	\end{equation}
	and $\frk q$ is the set of matrices in $\frk{sl}(3,\R)$ of the form
	\[
	\begin{pmatrix}
	*&0&0\\
	*&*&0\\
	*&*&*
	\end{pmatrix}
	\]
	The modifications of $\frk g$ in $\frk{sl}(3,\R)$ are the subalgebras of $\frk{sl}(3,\R)$ of the form $\{X+\sigma(X)\,:\,X\in \frk g\}$,
	for some linear map $\sigma: \frk g \to \frk q$. 
	We  show that all three dimensional Lie algebras with a bracket generating plane are graphs of such a $\sigma$: 
	
	Case~\ref{caseA}:
	If $\frk s$ is solvable, then define $\sigma$ by the assignments:
	\[
	\sigma(e_1)=\begin{pmatrix}
	\frac{2\beta}{3} &0&0\\
	\alpha&-\frac{\beta}{3}&0\\
	0&0&-\frac{\beta}{3}
	\end{pmatrix},\quad \sigma(e_2)=\sigma(e_3)=0.
	\]
	It is easy to check that vectors $f_i:=e_i+\sigma(e_i)$, $i=1,2,3$, satisfy the bracket relations of case~\ref{caseA} in Proposition~\ref{prop04111840}.
	
	Case~\ref{caseB}:
	For this case, we choose
	\[
	\sigma(e_1)=\begin{pmatrix}
	0 &0&0\\
	-1&0&0\\
	0&0&0
	\end{pmatrix},\quad\sigma(e_2)=\begin{pmatrix}
	 0&0&0\\
	0&0&0\\
	0&-1&0
	\end{pmatrix},\quad\sigma(e_3)=\begin{pmatrix}
	0&0&0\\
	0&0&0\\
	-1&0&0
	\end{pmatrix}.
	\]
	
	Case~\ref{caseC}:
we obtain the brackets in ~\ref{caseC} by choosing
	\[
	\sigma(e_1)=0,\quad\sigma(e_2)=\begin{pmatrix}
	 0&0&0\\
	1/2&0&0\\
	0&0&0
	\end{pmatrix},\quad\sigma(e_3)=\begin{pmatrix}
	1/2&0&0\\
	0&-1/2&0\\
	0&0&0
	\end{pmatrix}.
	\]
	
	Case~\ref{caseD}:
	In this case we use the finite prolongation $\frk{su}(2,1)$ of the Heisenberg algebra, as in~\cite[p313]{MR788413}.
	Let 
	\[
	J= \begin{pmatrix}1&0&0\\0&-1&0\\0&0&-1\end{pmatrix} .
	\]
	The Lie algebra $\frk{su}(2,1)$ is given by $3\times3$ complex matrices $A$ with zero trace and such that $A^*J+JA=0$, where $A^*$ is the hermitian transpose of $A$.
	Define the Lie algebra automorphism $\theta:\frk{su}(2,1)\to\frk{su}(2,1)$, $\theta A:=JAJ$.
	Define
	\begin{align*}
	X  & = \begin{pmatrix} 0 & i & 0 \\ -i & 0 & -i \\ 0 & -i & 0 \end{pmatrix}  & 
	Y  & = \begin{pmatrix} 0 & 1 & 0 \\ 1 & 0 & 1 \\ 0 & -1 & 0 \end{pmatrix}  & 
	Z  & = \begin{pmatrix} 2i & 0 & 2i \\ 0 & 0 & 0 \\ -2i & 0 & -2i \end{pmatrix} \\
	H  & = \begin{pmatrix} 0 & 0 & 1 \\ 0 & 0 & 0 \\ 1 & 0 & 0 \end{pmatrix}  &&& 
	U  & = \begin{pmatrix} i & 0 & 0 \\ 0 & -2i & 0 \\ 0 & 0 & i \end{pmatrix} \\
	\theta X  & = \begin{pmatrix} 0 & -i & 0 \\ i & 0 & -i \\ 0 & -i & 0 \end{pmatrix}  & 
	\theta Y  & = \begin{pmatrix} 0 & -1 & 0 \\ -1 & 0 & 1 \\ 0 & -1 & 0 \end{pmatrix}  & 
	\theta Z  & = \begin{pmatrix} 2i & 0 & -2i \\ 0 & 0 & 0 \\ 2i & 0 & -2i \end{pmatrix}
	\end{align*}
	The grading of $\frk{su}(2,1)$ is
	\begin{align*}
	\frk g_{-2}(\frk g) &= \Span\{Z\} \\
	\frk g_{-1}(\frk g) &= \Span\{X,Y\} \\
	\frk g_{0}(\frk g) &= \Span\{H,U\} \\
	\frk g_{1}(\frk g) &= \Span\{\theta X,\theta Y\} \\
	\frk g_{2}(\frk g) &= \Span\{\theta Z\},
	\end{align*}
	where $\frk g_{-2}(\frk g)\oplus \frk g_{-2}(\frk g) = \frk g$ is the Heisenberg Lie algebra: notice that $[X,Y]=Z$ while $[X,Z]=[Y,Z]=0$. 
	So, $\frk q = \Span\{H,U,\theta X,\theta Y,\theta Z\}$.
	Define $\sigma:\frk g\to\frk q$ by setting
	\begin{align*}
	\sigma X &:= -\frac1{16} \theta X + i\frac{9}{16} \theta Y 
		= \begin{pmatrix} 0 & -i\frac12 & 0 \\ -i\frac58 & 0 & i\frac58 \\ 0 & -i\frac12 & 0 \end{pmatrix} , \\
	\sigma Y &:= - i\frac{9}{16} \theta X - \frac1{16} \theta Y
		= \begin{pmatrix} 0 & -\frac12 & 0 \\ \frac58 & 0 & -\frac58 \\ 0 & -\frac12 & 0 \end{pmatrix} , \\
	\sigma Z &:= -i \frac{9}{4} H  + \frac14 U  - \frac{5}{16} \theta Z
		= \begin{pmatrix} -i\frac38 & 0 & -i\frac{13}8 \\ 0 & -i\frac12 & 0 \\ -i\frac{23}8 & 0 & i\frac78 \end{pmatrix} .
	\end{align*}
	One can easily check that $f_1=X+\sigma X$, $f_2=Y+\sigma Y$ and $f_3=Z+\sigma Z$ form a basis of a Lie subalgebra of $\frk{su}(2,1)$ satisfying the relations of Case~\ref{caseD}.
\end{proof}
\begin{remark}
	The map $\sigma$ above can easily be found using the software Maple and it is not the unique.
\end{remark}

\subsubsection{Rigid motions of the plane as a modification of the Heisenberg group}
We conclude this section discussing more in detail the case of the group of rigid motions of the plane as a modification of the Heisenberg group.
At a group level, we may represent points in the Heisenberg group $\bb H$ as matrices in $SL(3,\R)$ by
\[
H(x_1,x_2,x_3):=\begin{pmatrix} 1&x_1&x_3\\
0&1&x_2\\
0&0&1
\end{pmatrix},
\]
for $x_1,x_2,x_3\in\R$.

The Lie algebra of the the group of rigid motions of the plane $E(2)$ corresponds to the case~\ref{caseA} with $\alpha=-1$ and $\beta=0$.
The corresponding representation in $\frk{sl}(3,\R)$ given in the previous theorem is the span of the vectors
\[
f_1 = \begin{pmatrix}0&1&0\\-1&0&0\\0&0&0\end{pmatrix} ,
\qquad
f_2 = \begin{pmatrix}0&0&0\\0&0&1\\0&0&0\end{pmatrix} ,
\qquad
f_3 = \begin{pmatrix}0&0&1\\0&0&0\\0&0&0\end{pmatrix} .
\]
At the group level, the points of $E(2)$ inside $SL(3,\R)$ are parametrized by
\[
R(y_1,y_2,y_3):=\begin{pmatrix} \cos y_1&\sin y_1&y_3\\
-\sin y_1&\cos y_1&y_2\\
0&0&1
\end{pmatrix}
\]
where $y_1\in\R/(2\pi\Z)$ and $y_2,y_3\in\R$.

With the procedure described in Remark~\ref{rem05101758}, we find the mapping $E(2)\to \bb H$:
\[
R(y_1,y_2,y_3) \mapsto H(\tan y_1,y_2,y_3),
\]
which is defined on the domain $(-\pi/2,\pi/2)\times \R^2$.

\subsection{Modifications of the free nilpotent Lie group $F_{24}$}\label{freemod}
We consider with the free nilpotent Lie algebra $\frk{f}_{24}={\rm span}\{e_i\,:\,i=1,\dots,8\}$ of rank 2 and step 4 and the corresponding simply connected Lie group $F_{2,4}$. 
We will prove the following result
\begin{theorem}\label{thm5ea02e50}
	There exists a nilpotent Lie group $S$, not isomorphic to $F_{2,4}$, that is a modification of $F_{2,4}$ and is globally contactomorphic to $F_{2,4}$.
\end{theorem}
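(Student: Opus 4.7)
The plan is to exhibit an explicit non-stratified nilpotent modification $\frk s$ of $\frk f_{24}$ inside a finite Tanaka prolongation, and then to upgrade the local contactomorphism from Lemma~\ref{lem04071119} to a global one by exploiting simple connectedness and nilpotency on both sides.

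First I would fix a finite Tanaka prolongation $\frk p = \mathrm{Prol}(\frk f_{24}, \frk g_0)$ for a small, tractable subalgebra $\frk g_0\subset\mathrm{Der}(\frk f_{24})$, and write $\frk q = \bigoplus_{k\ge 0}\frk g_k$. Fixing a basis $e_1,\dots,e_8$ of $\frk f_{24}$ adapted to the grading and parametrising a candidate $\sigma:\frk f_{24}\to\frk q$ by its matrix entries, the requirement that $\frk s := \{X+\sigma(X):X\in\frk f_{24}\}$ be closed under the bracket of $\frk p$ becomes a finite polynomial system in those entries. I would solve it by taking $\sigma$ supported in as few coordinates as possible, but chosen so that the bracket on $\frk s$ genuinely differs from that of $\frk f_{24}$: for instance, one can arrange that one of the three linearly independent step-$4$ brackets of $\frk f_{24}$ becomes expressible through lower-grade elements of $\frk s$ once $\sigma$ is turned on. This alters the dimension sequence of the lower central series of $\frk s$, and so forces non-isomorphism with $\frk f_{24}$. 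Nilpotency of $\frk s$ itself follows from a direct degree argument: any iterated bracket of elements of the form $X+\sigma(X)$ decomposes, along $\frk p=\frk f_{24}\oplus\frk q$, as an iterated bracket in $\frk f_{24}$ (which vanishes beyond length four) plus corrections of strictly positive grade in $\frk q$, and these positive-grade corrections in turn vanish after finitely many further nestings since $\frk p$ is finite-dimensional and graded.

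To upgrade the local contactomorphism $\psi^S_G = \pi|_G^{-1}\circ\pi|_S$ of Lemma~\ref{lem04071119} to a global diffeomorphism, I would use that both $G=F_{2,4}$ and the simply connected nilpotent Lie group $S$ have dimension equal to $\dim M$, so both exponential maps are global diffeomorphisms. Since $\frk s\cap\frk q = \{0\}$, the proofs of Lemmas~\ref{lem11121630} and~\ref{lem03252021} and of Proposition~\ref{prop03271803} go through verbatim with $\frk g$ replaced by~$\frk s$, so $\pi|_S$ is an embedding of~$S$ as an open subset of~$M$. In common exponential coordinates on $M$, the maps $\pi|_G$ and $\pi|_S$ take the form of explicit polynomial parametrisations determined by Baker--Campbell--Hausdorff and the entries of~$\sigma$. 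The remaining and most delicate step is to verify that $\pi(G)=\pi(S)$; this is the main obstacle and dictates the choice of~$\sigma$, which must be simple enough — in practice strictly upper-triangular in the grading — that the two polynomial parametrisations have the same image, yet nontrivial enough to ensure $\frk s\not\cong\frk f_{24}$. Once $\pi(G)=\pi(S)$ is established, $\psi^S_G$ extends to a global polynomial contactomorphism $F_{2,4}\to S$.
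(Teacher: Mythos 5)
Your overall strategy matches the paper's (choose a sparse $\sigma$ whose graph is a nilpotent subalgebra of a finite prolongation, distinguish it from $\frk f_{24}$ by the lower central series, then globalise the local contactomorphism), but two steps as written do not hold up. First, the nilpotency argument is wrong: for $\frk f_{24}$ the full prolongation is $\frk p=\frk f_{24}\oplus\frk g_0$ with $\frk g_0\simeq\frk{gl}(2,\R)$ and \emph{no} positive part, so $\sigma$ takes values in grade $0$ and the correction terms $[\sigma(X),Y]$ land in $\frk g_{-k}$, i.e.\ in negative grades, not ``strictly positive grade in $\frk q$''. Your degree argument therefore gives nothing, and indeed it must fail: the generic modification of this type (the family $\frk s(a,b,c)$ with $\sigma(e_1)=\left(\begin{smallmatrix}a&0\\c&b\end{smallmatrix}\right)$) is solvable but \emph{not} nilpotent whenever $a$ or $b$ is nonzero, because $\ad_{f_1}$ then acts with nonzero eigenvalues. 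Nilpotency is a special feature of choosing $\sigma(e_1)$ to be a nilpotent derivation (the paper takes $a=b=0$, $c=1$), and has to be checked for that choice, not deduced from the grading.

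Second, the global step is essentially deferred rather than proved. Your claim that Lemmas~\ref{lem11121630} and~\ref{lem03252021} ``go through verbatim with $\frk g$ replaced by $\frk s$'' is unjustified: those proofs use the dilation invariance $\delta_\lambda(\frk g)=\frk g$, which fails for $\frk s$ since $\delta_\lambda(e_1+\sigma(e_1))=\lambda^{-1}e_1+\sigma(e_1)\notin\frk s$. And the identity $\pi(G)=\pi(S)$, which you correctly identify as the crux, is left as a desideratum on the choice of $\sigma$. The paper closes this gap by brute force: it writes $\exp_P\bigl(\sum x_if_i\bigr)$ via the development map in $F_{24}\rtimes GL(2,\R)$, reduces $\Psi=\pi|_G^{-1}\circ\pi|_S$ to an explicit ODE system solved by Baker--Campbell--Hausdorff, and reads off that $\gamma_i(1)=x_i+(\text{polynomial in }x_1,\dots,x_{i-1})$, a triangular polynomial map that is manifestly a global diffeomorphism of $\R^8$. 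Some such explicit (or at least structural) verification is needed; without it your argument establishes only the local contactomorphism already given by Lemma~\ref{lem04071119}.
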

\begin{proof}
The Lie brackets in $\frk{f}_{24}$ are
\begin{align*}
&[e_2,e_1]=e_3,\quad [e_3,e_1]=e_4,\quad [e_3,e_2]=e_5,\\
& [e_4,e_1]=e_6,\quad [e_5,e_1]=e_7,,\quad [e_4,e_2]=e_7,\quad [e_5,e_2]=e_8.
\end{align*}
It is known that the full Tanaka prolongation of $\frk{f}_{24}$ is $\frk p=\frk{f}_{24} \oplus {\rm Der}(\frk g)$, with ${\rm Der}(\frk g)\simeq \frk{gl}(2,\R)$ (see \cite{MR2365778}). Therefore, the modifications of $\frk{f}_{24}$ are subalgebras of $\frk p$ that are graphs of some linear map
$\sigma: \frk{f}_{24}\to \frk{gl}(2,\R)$. 
Here we only consider $\sigma$ that on the basis of $\frk{f}_{24}$ is zero except for $\sigma(e_1)$. Imposing that the graph is a Lie algebra, a direct computation shows that
\[
\sigma(e_1)=\begin{pmatrix} a&0\\
c&b\\
\end{pmatrix},
\]
where $a,b,c\in \R$.
We obtain a three parameter family $\frk{s}(a,b,c)$ of Lie algebras with basis 
$f_1,\dots,f_8$, where $f_1=e_1+\sigma(e_1)$ and $f_i=e_i$ for $i=2,\dots,8$,
and brackets
\begin{align*}
	& 		[f_2, f_1] = f_3- bf_2, 
	\quad 	[f_3, f_1] = f_4-(a+b)f_3, 
	\quad 	[f_3, f_2] = f_5, \\
	&  		[f_4, f_1] = f_6 - c f_5 - (2a+b)f_4 , 
	\quad 	[f_4, f_2] = f_7,
	\quad 	[f_5, f_1] = f_7 - (a+2b)f_5 ,\\
	&  		[f_1, f_6] = 2cf_7 + (3a+b)f_6, 
	\quad 	[f_1, f_7] = cf_8 + 2(a+b)f_7, \\
	& 		[f_1, f_8] = (a+3b)f_8  , 
	\quad 	[f_5, f_2] = f_8.
\end{align*}

In particular, setting $a=b=0$ gives a one parameter family of nilpotent Lie algebras $\frk{s}(c)$. We now find the contact mapping $\Psi$ from $S(c)$ to $F_{24}$ when $c=1$, as in Remark~\ref{rem05101758}.
Every point in $S(1)$ is of the form $\exp_P (\sum x_i f_i)$. 
Following \cite{MR1958588}, $\exp_P (\sum x_i f_i)= (\mathcal{E}_{F_{24}}(x_1\sigma(e_1);\sum x_i e_i),\exp_{GL}(x_1\sigma(e_1))\in F_{24}\rtimes GL(2,\R)$, where 
$\mathcal{E}_{F_{24}}(x_1e_1;\sum x_i e_i)=\gamma(1)$ and $\gamma:[0,1]\to F_{24}$ is the solution of
\[
\begin{cases}
\gamma'(t)&=dL_{\gamma(t)} \exp_{GL}(t x_1\sigma(e_1))(\sum x_i e_i)\\
\gamma(0)&=e_{F_{24}}.
\end{cases}
\]
The image of this point via $\Psi$ is going to be that element $p\in F_{24}$ such that $gQ=\exp_P (\sum x_i f_i)Q$, i.e.,
\[
\Psi\left(\exp_P (\sum x_i f_i)\right) = \mathcal{E}_{F_{24}}(x_1e_1;\sum x_i e_i) .
\]
To compute this, first,
\begin{align*}
v &:= \exp_{GL}(t x_1\sigma(e_1))\left(\sum x_i e_i\right) \\
	&=\left(x_1,x_1^2t+x_2,x_3,x_4,x_5+tx_1x_4,x_6,x_7+2tx_1x_6,x_8+tx_1x_7+t^2x_1^2x_6\right).
\end{align*}

Second, we need to compute $dL_{\gamma}v$ using the Baker--Campbell--Hausdorff formula:
\[
dL_{\gamma}v 
	= \left.\frac{d}{d h}\right|_{h=0} \exp^{-1}(\exp(\gamma)\exp(hv))
	= v + \frac12 [\gamma,v] + \frac1{12} [\gamma,[\gamma,v]] .
\]
The system of differential equations $\dot\gamma=dL_{\gamma}v$ that we obtain is
\begin{align*}
\dot\gamma_1 &= x_1 \\
\dot\gamma_2 &= t x_1^{2} + x_2 \\
\dot\gamma_3 &= -\frac{1}{2}  t x_1^{2} \gamma_1 - \frac{1}{2}  x_2 \gamma_1 + \frac{1}{2}  x_1 \gamma_2 + x_3 \\
\dot\gamma_4 &= \frac{1}{12}  t x_1^{2} \gamma_1^{2} + \frac{1}{12}  x_2 \gamma_1^{2} - \frac{1}{12}  {\left(\gamma_1 \gamma_2 - 6  \gamma_3\right)} x_1 - \frac{1}{2}  x_3 \gamma_1 + x_4 \\
\dot\gamma_5 &= \frac{1}{12}  x_2 \gamma_1 \gamma_2 - \frac{1}{12}  x_1 \gamma_2^{2} + \frac{1}{12}  {\left(x_1^{2} \gamma_1 \gamma_2 + 6  x_1^{2} \gamma_3 + 12  x_1 x_4\right)} t 
	- \frac{1}{2}  x_3 \gamma_2 \\
	&\qquad
	+ \frac{1}{2}  x_2 \gamma_3 
	+ x_5 \\
\dot\gamma_6 &= \frac{1}{12}  x_3 \gamma_1^{2} - \frac{1}{12}  {\left(\gamma_1 \gamma_3 - 6  \gamma_4\right)} x_1 - \frac{1}{2}  x_4 \gamma_1 + x_6 \\
\dot\gamma_7 &= \frac{1}{6}  x_3 \gamma_1 \gamma_2 
	- \frac{1}{12}  x_2 \gamma_1 \gamma_3 
	- \frac{1}{12}  \left(x_1^{2} \gamma_1 \gamma_3  
		+ 6  x_1 x_4 \gamma_1 
		- 6  x_1^{2} \gamma_4 
		- 24  x_1 x_6\right) t 
	\\&\qquad 
	- \frac{1}{12}  \left(\gamma_2 \gamma_3 
		- 6  \gamma_5\right) x_1 
	- \frac{1}{2}  x_5 \gamma_1 
	- \frac{1}{2}  x_4 \gamma_2 
	+ \frac{1}{2}  x_2 \gamma_4 
	+ x_7 \\
\dot\gamma_8 &= t^{2} x_1^{2} x_6 
	+ \frac{1}{12}  x_3 \gamma_2^{2} 
	- \frac{1}{12}  x_2 \gamma_2 \gamma_3 
	- \frac{1}{12}  \left(x_1^{2} \gamma_2 \gamma_3 
		+ 6  x_1 x_4 \gamma_2 
		- 6  x_1^{2} \gamma_5 
		- 12  x_1 x_7\right) t 
	\\&\qquad 
	- \frac{1}{2}  x_5 \gamma_2 
	+ \frac{1}{2}  x_2 \gamma_5 
	+ x_8 .
\end{align*}

Third, we need to integrate this system of ODEs 
with initial conditions $\gamma_i(0)=0$ for every $i=1,\dots,8$. The solution is
\begin{align*}       	
\gamma_{1}(t) &= t x_{1} \\
\gamma_{2}(t) &= \frac{1}{2}  t^{2} x_{1}^{2} + t x_{2} \\
\gamma_{3}(t) &= -\frac{1}{12}  t^{3} x_{1}^{3} + t x_{3} \\
\gamma_{4}(t) &= t x_{4} \\
\gamma_{5}(t) &= -\frac{1}{240}  t^{5} x_{1}^{5} 
	+ \frac{1}{12}  t^{3} x_{1}^{2} x_{3} 
	+ \frac{1}{2}  t^{2} x_{1} x_{4} 
	+ t x_{5} \\
\gamma_{6}(t) &= \frac{1}{720}  t^{5} x_{1}^{5} + t x_{6} \\
\gamma_{7}(t) &= \frac{1}{720}  t^{6} x_{1}^{6} 
	+ \frac{1}{360}  t^{5} x_{1}^{4} x_{2} 
	+ t^{2} x_{1} x_{6} 
	+ t x_{7} \\
\gamma_{8}(t) &= \frac{1}{5040}  t^{7} x_{1}^{7} 
	+ \frac{1}{720}  t^{6} x_{1}^{5} x_{2} 
	+ \frac{1}{720}  \left(x_{1}^{3} x_{2}^{2} 
		+ 3  x_{1}^{4} x_{3}\right) t^{5} 
	\\&\qquad 
	- \frac{1}{12}  \left(x_{1} x_{2} x_{4} 
		- x_{1}^{2} x_{5} 
		- 4  x_{1}^{2} x_{6}\right) t^{3} 
	+ \frac{1}{2}  t^{2} x_{1} x_{7} 
	+ t x_{8} .
\end{align*}
Therefore, the mapping from $S(1)$ to $G$ is $\Psi:\exp_P (\sum x_i f_i)\mapsto\gamma(1)$,
which is a global, surjective smooth contactomorphism.

Finally, $S(1)$ is not isomorphic to $F_{2,4}$ because $S(1)$ has nilpotency step 5 instead of 4, as one can easily see from the expression of the Lie brackets in $\frk s(1)$.
\end{proof}
\begin{remark}
	The mapping from $S(1)$ to $G$ described above is in particular biLipschitz on every compact set,  when the groups are endowed with left-invariant sub-Riemannian distances. Notice, however, that this is not a global quasiconformal mapping.
\end{remark}

%

\subsection{Modifications of ultra-rigid stratified groups}\label{ultra}
A stratified Lie algebra $\frk g$ is called \emph{ultra-rigid} if the only automorphisms of $\frk g$ preserving the stratifications are dilations, see \cite{MR3331166}.
In particular, the full Tanaka prolongation of such $\frk g$ is $\frk p=\frk g\rtimes\R$, as semi-direct product of Lie algebras.
In this section we describe all modifications in $\frk g\rtimes\R$ and their equivalence relation.
Many results do not need the assumption of $\frk g$ being ultra-rigid, so we assume this hypothesis only when needed.

Let $\frk g=\bigoplus_{j=-s}^{-1}\frk g_{j}$ be a stratified Lie algebra.
Let $D:\frk g\to\frk g$ be the linear map with $Dv=jv$ for $v\in\frk g_{-j}$.
Notice that $D$ is a derivation of $\frk g$ that preserves the layers and that $\delta_{e^t}=e^{tD}:\frk g\to\frk g$ are the dilations.

The semi-direct product $\frk p:=\frk g\rtimes\R$ is the Lie algebra whose Lie brackets are
\[
[(0,a),(Y,0)] = (aDY,0)
\quad\text{hence}\quad
[(X,a),(Y,b)] = ([X,Y]+aDY-bDX,0) .
\]

\begin{proposition}\label{prop05271430}
	Let $\sigma:\frk g\to\R$ be a linear map and set $\frk s:=\{(X,\sigma X):X\in\frk g\}$.
	The vector space $\frk s$ is a Lie subalgebra of $\frk g\rtimes\R$ if and only if $\bigoplus_{j=-2}^{-s}\frk g_j\subset\ker\sigma$.
\end{proposition}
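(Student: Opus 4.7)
The condition for $\frk s$ to be a subalgebra is a simple pointwise identity, so my plan is to first derive that identity explicitly, then prove the two directions as a direct computation and an induction along the stratification.

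\textbf{Step 1: The defining identity.} Using the bracket formula recalled right before the statement, I compute
\[
[(X,\sigma X),(Y,\sigma Y)] = \bigl([X,Y] + (\sigma X)\,DY - (\sigma Y)\,DX,\ 0\bigr).
\]
Since the second component is $0$, membership of this bracket in $\frk s$ is equivalent to
\[
\sigma\bigl([X,Y]\bigr) + (\sigma X)\,\sigma(DY) - (\sigma Y)\,\sigma(DX) = 0 \qquad \forall X,Y\in\frk g. \tag{$\ast$}
\]
Writing $\sigma_j := \sigma|_{\frk g_{-j}}$, the whole proposition reduces to showing that $(\ast)$ is equivalent to $\sigma_j=0$ for all $j\ge 2$.

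\textbf{Step 2: Easy direction ($\Leftarrow$).} Suppose $\sigma$ vanishes on $\bigoplus_{j\ge 2}\frk g_{-j}$. Then $\sigma$ is supported on $\frk g_{-1}$, where $D$ acts as the identity; decomposing $X=\sum_i X_i$ with $X_i\in\frk g_{-i}$ gives
\[
\sigma(DX) = \sum_i i\,\sigma(X_i) = \sigma(X_1) = \sigma(X),
\]
and similarly for $Y$. Hence the cross term in $(\ast)$ collapses to $(\sigma X)(\sigma Y)-(\sigma Y)(\sigma X)=0$. On the other hand, $[X,Y]\in[\frk g,\frk g]\subset\bigoplus_{j\ge 2}\frk g_{-j}\subset\ker\sigma$, so $\sigma([X,Y])=0$. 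Thus $(\ast)$ holds.

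\textbf{Step 3: Hard direction ($\Rightarrow$), by induction on $j\ge 2$.} For the base case, pick $X,Y\in\frk g_{-1}$ so that $DX=X$ and $DY=Y$; the cross term of $(\ast)$ becomes $(\sigma X)(\sigma Y)-(\sigma Y)(\sigma X)=0$, leaving $\sigma([X,Y])=0$. Since $\frk g$ is stratified, $[\frk g_{-1},\frk g_{-1}]=\frk g_{-2}$, so $\sigma_2=0$.

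For the inductive step, assume $\sigma_k=0$ for $2\le k<j$. To show $\sigma_j=0$, use the fact that $\frk g_{-j}=[\frk g_{-1},\frk g_{-(j-1)}]$. For $X\in\frk g_{-1}$ and $Y\in\frk g_{-(j-1)}$ we have $\sigma(DX)=\sigma(X)$ and $\sigma(DY)=(j-1)\sigma(Y)$, and by the inductive hypothesis $\sigma(Y)=0$ (since $j-1\ge 2$). The identity $(\ast)$ therefore reduces to $\sigma([X,Y])=0$, which shows $\sigma$ vanishes on a spanning set of $\frk g_{-j}$, completing the induction.

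\textbf{Expected obstacle.} There is no serious obstacle: once the identity $(\ast)$ is written down, both directions are immediate and the only subtlety is the bookkeeping with the $\sigma(DX)$ term, which is handled by using the eigenbasis of $D$ provided by the stratification.
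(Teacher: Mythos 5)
Your proposal is correct and follows essentially the same route as the paper: both derive the closure identity $\sigma([X,Y])+(\sigma X)\sigma(DY)-(\sigma Y)\sigma(DX)=0$, prove the forward direction by induction up the stratification using $\frk g_{-k-1}=[\frk g_{-1},\frk g_{-k}]$, and verify the converse by observing that the cross terms cancel on the first layer while $[\frk g,\frk g]\subset\ker\sigma$. The only cosmetic difference is that the paper reduces the converse to homogeneous components by bilinearity, whereas you handle general $X,Y$ directly via $\sigma(DX)=\sigma(X)$; the arguments are equivalent.
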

\begin{proof}
	First, we note that $\frk s$ is a Lie algebra if and only if, for all $X,Y\in \frk g$,
	\begin{equation}\label{eq05271343}
	\sigma([X,Y]) + (\sigma X) (\sigma DY) - (\sigma Y) (\sigma DX) = 0 .
	\end{equation}
	
	\ddx
	Suppose $\frk s$ is a Lie algebra, i.e.,~\eqref{eq05271343} holds for all $X,Y\in\frk g$.
	We prove $\bigoplus_{j=-2}^{-s}\frk g_j\subset\ker\sigma$ by induction on $j$.
	If $X,Y\in\frk g_{-1}$, then $DX=X$ and $DY=Y$, thus~\eqref{eq05271343} implies $\sigma([X,Y])=0$.
	Since $\frk g_{-2}=[\frk g_{-1},\frk g_{-1}]$, it follows that $\frk g_{-2}\subset\ker\sigma$.
	Now, suppose that $\frk g_{-k}\subset\ker\sigma$ for $k\ge2$.
	If $X\in\frk g_{-1}$ and $Y\in\frk g_{-k}$, then~\eqref{eq05271343} implies that $\sigma([X,Y])=0$.
	Since $\frk g_{-k-1}=[\frk g_{-1},\frk g_{-k}]$, it follows that $\frk g_{-k-1}\subset\ker\sigma$.
	We conclude that $\bigoplus_{j=-2}^{-s}\frk g_j\subset\ker\sigma$.
	
	\ssx
	Suppose $\bigoplus_{j=-2}^{-s}\frk g_j\subset\ker\sigma$.
	By the bilinearity of the expression, we need to show that~\eqref{eq05271343} holds only when $X\in\frk g_{i}$ and $Y\in\frk g_j$ for some $i$ and $j$.
	Since $\sigma$ is non-zero only on the first layer, the only non-trivial instance of~\eqref{eq05271343} is for $X,Y\in\frk g_{-1}$.
	In this case, $\sigma([X,Y])=0$, and $(\sigma X) (\sigma DY) - (\sigma Y) (\sigma DX) = (\sigma X) (\sigma Y) - (\sigma Y) (\sigma X) =0$.
	Therefore,~\eqref{eq05271343} is satisfied and $\frk s$ is a Lie algebra.
\end{proof}


\begin{lemma}\label{lem07041801}
	The Lie algebra automorphisms $\phi:\frk p\to\frk p$  such that $\phi(\{0\}\times\R)=\{0\}\times\R$ and $\phi(\frk g_{-1}\times\R) = \frk g_{-1}\times\R$ are exactly those of the form $\phi(X,a)=(\phi_1X,a)$ for some Lie algebra automorphism $\phi_1:\frk g\to\frk g$ that preserves the layers.
\end{lemma}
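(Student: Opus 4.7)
The plan is to identify $\frk g\times\{0\}$ intrinsically inside $\frk p$, then extract $\phi_1$ from the restriction of $\phi$, and finally pin down the value of $\phi$ on $\{0\}\times\R$ using a single bracket relation. The converse direction is then a direct verification.

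First, I would observe that $[\frk p,\frk p]=\frk g\times\{0\}$. Indeed, the bracket formula shows $[\frk p,\frk p]\subseteq\frk g\times\{0\}$, and since $D$ acts as multiplication by $j$ on $\frk g_{-j}$ (with $j\ge 1$), the map $D:\frk g\to\frk g$ is invertible. Hence $[(0,1),(Y,0)]=(DY,0)$ already realises every element of $\frk g\times\{0\}$. Any Lie algebra automorphism $\phi$ must therefore preserve $\frk g\times\{0\}$, giving a Lie algebra automorphism $\phi_1:\frk g\to\frk g$ with $\phi(X,0)=(\phi_1 X,0)$. Combining $\phi(\frk g_{-1}\times\R)=\frk g_{-1}\times\R$ with $\phi(\frk g\times\{0\})=\frk g\times\{0\}$ gives $\phi(\frk g_{-1}\times\{0\})=\frk g_{-1}\times\{0\}$, that is, $\phi_1(\frk g_{-1})=\frk g_{-1}$. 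Since $\frk g_{-1}$ generates $\frk g$ and $\phi_1$ is an algebra automorphism, $\phi_1$ preserves every stratum, and in particular $\phi_1 D=D\phi_1$.

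Next I would determine $\phi$ on $\{0\}\times\R$. By hypothesis $\phi(0,1)=(0,c)$ for some $c\ne 0$. Applying $\phi$ to the identity $[(0,1),(Y,0)]=(DY,0)=(Y,0)$ for a nonzero $Y\in\frk g_{-1}$ gives $[(0,c),(\phi_1 Y,0)]=(\phi_1 Y,0)$, i.e., $(cD\phi_1 Y,0)=(\phi_1 Y,0)$. Using $\phi_1 Y\in\frk g_{-1}$ and $D|_{\frk g_{-1}}=\mathrm{Id}$, this forces $c\phi_1 Y=\phi_1 Y$, hence $c=1$. Thus $\phi(0,a)=(0,a)$ and, by linearity, $\phi(X,a)=(\phi_1 X,a)$, as claimed.

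For the converse, given any layer-preserving Lie algebra automorphism $\phi_1$ of $\frk g$, I would set $\phi(X,a):=(\phi_1 X,a)$ and check the bracket relation directly: since $\phi_1 D=D\phi_1$ and $\phi_1$ is a homomorphism, both $\phi([(X,a),(Y,b)])$ and $[\phi(X,a),\phi(Y,b)]$ evaluate to $(\phi_1([X,Y]+aDY-bDX),0)$. The preservation conditions on $\{0\}\times\R$ and $\frk g_{-1}\times\R$ are then immediate. The whole argument involves no substantial obstacle; the only mildly delicate point is pinning down the scalar $c$, but a single test against a first-layer element suffices and no further structural input about $\frk g$ (such as ultra-rigidity) is needed here.
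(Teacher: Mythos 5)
Your proof is correct and follows essentially the same route as the paper's: identify $\frk g\times\{0\}=[\frk p,\frk p]$ to extract $\phi_1$, deduce layer preservation from $\phi_1(\frk g_{-1})=\frk g_{-1}$, and test the bracket $[(0,1),(Y,0)]$ against a first-layer element to force the scalar on $\{0\}\times\R$ to be $1$. The only difference is that you spell out the surjectivity of the bracket onto $\frk g\times\{0\}$ via the invertibility of $D$, which the paper leaves implicit.
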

\begin{proof}
	On the one hand, if $\phi_1:\frk g\to\frk g$ is a Lie algebra automorphism that preserves the layers, then $\phi(X,a)=(\phi_1X,a)$ is clearly a Lie algebra automorphism $\phi:\frk p\to\frk p$ with $\phi(\{0\}\times\R)=\{0\}\times\R$ and $\phi(\frk g_{-1}\times\R) = \frk g_{-1}\times\R$, because $\phi_1D=D\phi_1$.
	
	On the other hand, if $\phi:\frk p\to\frk p$ is a Lie algebra automorphism, then $\phi(\frk g\times\{0\})=\frk g\times\{0\}$ because $\frk g\times\{0\} = [\frk p,\frk p]$.
	Suppose also that $\phi(\{0\}\times\R)=\{0\}\times\R$ and $\phi(\frk g_{-1}\times\R) = \frk g_{-1}\times\R$.
	Then $\phi(X,a)=\phi(X,0)+\phi(0,a) = (\phi_1(X),0) + (0,\phi_2(a))$ and $\phi_1(\frk g_{-1})=\frk g_{-1}$.
	This implies that $\phi_1(\frk g_j)=\frk g_j$ for all $j$, as one can prove by induction on $j$.
	Notice that, for all $X\in\frk g$ and all $a\in\R$,
	\[
	\phi_2(a)D\phi_1X =
	[(0,\phi_2(a)),(\phi_1X,0)] =
	\phi([(0,a),(X,0)]) = \phi(aDX,0) = a\phi_1DX .
	\]
	For every $X\in\frk g_{-1}$, $DX=X$ and $D\phi_1X=\phi_1X$, hence $\phi_2(a) \phi_1X = a\phi_1X$, i.e., $\phi_2(a)=a$.
\end{proof}

\begin{theorem}\label{thm5ea02e33}
	Suppose that $\frk g$ is ultra-rigid, i.e., $\frk p=\frk g\rtimes\R$ is its full Tanaka prolongation.
	The set of all non-isomorphic modifications of $\frk g$ is parametrized by $\frk g_{-1}^*/\R_{>0}$.
	Moreover, all modifications of $\frk g$ in $\frk p$ are solvable and the only nilpotent one is $\frk g$ itself.
\end{theorem}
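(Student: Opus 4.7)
The plan is to combine three ingredients: the parametrization of modifications from Proposition~\ref{prop05271430}, the isomorphism-via-$\Aut(\frk p,\frk g)$ principle from Theorem~\ref{thm05262101}, and the explicit description of $\Aut(\frk p,\frk g)$ supplied by Lemma~\ref{lem07041801} combined with the ultra-rigidity hypothesis.

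First, by Proposition~\ref{prop05271430} every modification of $\frk g$ in $\frk p=\frk g\rtimes\R$ is of the form $\frk s_\sigma:=\{(X,\sigma X):X\in\frk g\}$ for a unique linear map $\sigma:\frk g\to\R$ that annihilates $\bigoplus_{j=-s}^{-2}\frk g_j$, so modifications are in bijection with $\frk g_{-1}^*$. Theorem~\ref{thm05262101} identifies polarised-isomorphism classes of modifications with orbits of $\Aut(\frk p,\frk g)$ on this parameter space. Lemma~\ref{lem07041801} shows every $\phi\in\Aut(\frk p,\frk g)$ has the form $(X,a)\mapsto(\phi_1 X,a)$ for a strata-preserving Lie algebra automorphism $\phi_1$ of $\frk g$, and ultra-rigidity forces $\phi_1=\delta_\lambda$ for some $\lambda>0$. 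A short computation then shows that $\phi_\lambda(\frk s_\sigma)=\frk s_{\sigma'}$ with $\sigma'|_{\frk g_{-1}}$ a positive scalar multiple of $\sigma|_{\frk g_{-1}}$ depending on $\lambda$, and every positive rescaling arises this way. Hence the orbit space is precisely $\frk g_{-1}^*/\R_{>0}$, proving the first half of the theorem.

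For the algebraic properties, I would work with the transported bracket $[X,Y]_\sigma=[X,Y]_{\frk g}+(\sigma X)DY-(\sigma Y)DX$ on the graph identification $\frk s_\sigma\cong\frk g$. The integrability condition~\eqref{eq05271343} from Proposition~\ref{prop05271430} says exactly that $\sigma:\frk s_\sigma\to\R$ is a Lie algebra homomorphism onto an abelian target; its kernel is a subalgebra on which the $\sigma$-bracket coincides with the $\frk g$-bracket, so it is a subalgebra of the nilpotent algebra $\frk g$, hence nilpotent. Therefore $\frk s_\sigma$ is an extension of an abelian algebra by a nilpotent one, hence solvable. Finally, if $\sigma\ne 0$ I pick $X_0\in\frk g_{-1}$ with $\sigma X_0=1$ and note that ultra-rigidity forces $\frk g$ to have step at least two, so I can take $Y\in\frk g_{-k}$ for some $k\ge 2$; since $\sigma Y=0$, I get $[X_0,Y]_\sigma=kY+[X_0,Y]_{\frk g}$, with the correction term in $\frk g_{-k-1}$. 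Thus $\ad^{\frk s_\sigma}_{X_0}$ is triangular in the stratification with nonzero diagonal entry $k\ge 2$, so it is not nilpotent and $\frk s_\sigma$ cannot be nilpotent. The main subtle step is the reduction to $\{\delta_\lambda:\lambda>0\}$ in the description of $\Aut(\frk p,\frk g)$, which is precisely where ultra-rigidity is indispensable.
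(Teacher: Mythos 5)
Your proof is correct. The first half (parametrization by $\frk g_{-1}^*/\R_{>0}$) follows exactly the paper's route: Proposition~\ref{prop05271430} for the identification with $\frk g_{-1}^*$, Theorem~\ref{thm05262101} to reduce isomorphism of modifications to the action of $\Aut(\frk p,\frk g)$, Lemma~\ref{lem07041801} plus ultra-rigidity to reduce that group to $\{\delta_\lambda\}_{\lambda>0}$, and the computation that $\delta_\lambda$ rescales $\sigma|_{\frk g_{-1}}$ by a positive factor. The second half differs in detail, though not in substance. For solvability the paper simply observes that $\frk p=\frk g\rtimes\R$ is itself solvable, so every subalgebra is; your extension argument (that the integrability condition~\eqref{eq05271343} makes $\sigma:\frk s_\sigma\to\R$ a Lie algebra homomorphism with nilpotent kernel) is correct but does more work than necessary --- on the other hand it isolates a structural fact about $\frk s_\sigma$ that the one-liner does not. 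For non-nilpotency the paper takes $Y$ in the \emph{top} stratum $\frk g_{-s}$, where $[X,Y]_{\frk g}=0$, so that $(Y,0)$ is an honest eigenvector of $\ad_{(X,\sigma X)}$ surviving in every term of the lower central series; you instead take $Y\in\frk g_{-k}$ for any $k\ge2$ and invoke Engel's theorem via the triangular-with-nonzero-diagonal form of $\ad_{X_0}^{\frk s_\sigma}$ on $\bigoplus_{j\le -2}\frk g_j$. Both work, and both implicitly use that ultra-rigidity rules out step $1$ (your remark that $-\Id$ would otherwise be a non-dilation strata-preserving automorphism justifies this).
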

\begin{proof}
	The set of all modifications of $\frk g$ in $\frk p$ can be identified with $\frk g_{-1}^*$ by Proposition~\ref{prop05271430}, where $\sigma\in\frk g_{-1}^*$ is identified with $\sigma(\sum_jv_j)=\sigma(v_{-1})$ for $\sum_jv_j\in\frk g$ and the modification $\frk s_\sigma:=\{(X,\sigma X):X\in\frk g\}\subset\frk p$.
	 Since $\frk g$ is rigid, by Theorem~\ref{thm05262101} two modifications $\sigma,\tau\in\frk g_{-1}^*$ are isomorphic if and only if there is a Lie algebra automorphism $\phi:\frk p\to\frk p$ with $\phi(\{0\}\times\R)=\{0\}\times\R$ and $\phi(\frk g_{-1}\times\R) = \frk g_{-1}\times\R$ such that $\phi(\frk s_\sigma)=\frk s_\tau$.
	Therefore, by Lemma~\ref{lem07041801}, two modifications $\sigma,\tau\in\frk g_{-1}^*$ are isomorphic if and only if there is a Lie algebra automorphism $\phi_1:\frk g\to\frk g$ such that, for all $X\in\frk g$,
	\[
	(\phi_1X,\sigma X) = 
	(\phi_1X,\tau \phi_1X) ,
	\]
	i.e., $\sigma X=\tau \phi_1X$ for all $X\in\frk g_{-1}$.
	Now, since $\frk g$ is ultrarigid, $\phi_1=\delta_\lambda$ for some $\lambda>0$.
	Therefore, two modifications $\sigma,\tau\in\frk g_{-1}^*$ are isomorphic if and only if there is $\lambda>0$ such that $\sigma=\lambda\tau$.
	
	 Finally, notice that all modifications of $\frk g$ in $\frk p$ are solvable, because $\frk p$ itself is solvable.
        Moreover, the only nilpotent modification is $\frk g$ itself.
        Indeed, if $\frk s\neq\frk g$, then there is $X\in\frk g_{-1}$ with $\sigma X\neq0$, so that,
if $Y\in\frk g_{-s}$ is nonzero,
then $[(X,\sigma X),(Y,0)]= s\sigma X (Y,0)$, where $s$ is the step of $\frk g$.
        Therefore, we obtain that $(Y,0)\in[\frk s,[\dots,[\frk s,\frk s]\dots]]$ for any order of brackets, that is, $\frk s$ is not nilpotent.
\end{proof}


\printbibliography

\end{document}